\documentclass[smallextended]{svjour3}

\usepackage{color}

\smartqed
\usepackage{graphicx}
\usepackage{mathptmx}      
%
\usepackage{listings}
\usepackage{amssymb,amsmath,  amsfonts}
\usepackage{a4wide}
\newtheorem{assumption}{Assumption}

\newcommand{\Gr}{{\rm gph\,}}

\newcommand{\xb}{\bar x}
\newcommand{\yb}{\bar y}

\newcommand{\la}{\lambda}
\newcommand{\lab}{\bar\lambda}
\newcommand{\ob}{{\bar\omega}}

\newcommand{\R}{\mathbb{R}}
\newcommand{\N}{\mathbb{N}}
\newcommand{\norm}[1]{\|#1\|}

\newcommand{\dist}[1]{{\rm d}(#1)}
\newcommand{\B}{{\cal B}}

\newcommand{\mv}{\,\vert\, }
\newcommand{\setto}[1]{\mathop{\to}\limits^#1}

\begin{document}

\title{Lipschitz and H\"older stability of optimization problems and generalized equations\thanks{The final publication is available at Springer via http://dx.doi.org/10.1007/s10107-015-0914-1}}
\author{Helmut Gfrerer \and
             Diethard Klatte}
\institute{Helmut Gfrerer\at
Institute of Computational Mathematics, Johannes Kepler University Linz,
              A-4040 Linz, Austria,
             \email{helmut.gfrerer@jku.at}
             \and
             Diethard Klatte\at
             Department of Business Administration, Professorship Mathematics for Economists, University of Zurich, Switzerland, \email{diethard.klatte@business.uzh.ch}}

\date{}

\maketitle
\begin{abstract} This paper studies stability aspects of solutions of parametric mathematical programs and generalized equations, respectively, with disjunctive constraints. We present sufficient conditions that, under some constraint qualifications ensuring metric subregularity of the constraint mapping, continuity results of upper Lipschitz and upper H\"older type, respectively, hold. Furthermore, we apply the above results to parametric mathematical programs with equilibrium constraints and demonstrate, how some classical results for the nonlinear programming problem can be recovered and even improved by our theory.
\end{abstract}
\keywords{Mathematical programs with disjunctive constraints \and stationarity\and metric subregularity\and variational analysis\and upper Lipschitz stability\and upper H\"older stability}
\subclass{49K40\and 90C31\and 90C33}

\section{Introduction}
Consider the optimization problem
\begin{equation}\label{EqParOptProb}
P(\omega) \qquad\min_x f(x,\omega)\quad\mbox{subject to}\quad q(x,\omega)\in P,
\end{equation}
depending on the parameter vector $\omega$ belonging to some
topological space $\Omega$.  In  \eqref{EqParOptProb},
$f:\R^n\times\Omega\to\R$ and $q:\R^n\times \Omega\to\R^m$ are
continuous mappings and $P\subset \R^m$ is the union of finitely
many convex polyhedra $P_i,$ $i=1,\ldots,p,$ having the representation
\begin{equation}\label{EqReprPolyhedra}
P_i=\{y\mv a_{ij}^Ty\leq b_{ij}, j=1,\ldots,m_i\}
\end{equation}
with $a_{ij}\in\R^m$ and $b_{ij}\in\R$.
We will study stability results of Lipschitz or H\"older type for stationary and optimal solutions of $P(\omega)$ if $\omega$ varies near some reference parameter $\bar{\omega}$.

Of course, the parameter dependent nonlinear programming problem
\[
NLP(\omega) \qquad \min f(x,\omega) \quad \mbox{subject to} \quad
g(x,\omega)\leq 0,\ h(x,\omega)=0,
\]
where $f(x,\omega):\R^n\times\R^s\to\R$,
$g:\R^n\times\R^s\to\R^{m_I}$ and $h:\R^n\times\R^s\to\R^{m_E}$,
is a special case of \eqref{EqParOptProb} with
$q(x,\omega):=(g(x,\omega),h(x,\omega))$ and
$P:=\R^{m_I}_-\times\{0\}^{m_E}$. Let us consider some more
involved examples.

\begin{example}\label{ExMPEC}
Consider the parameter dependent MPEC
\[
MPEC(\omega)\qquad \min f(x,\omega) \quad \mbox{subject to} \quad
\begin{array}[t]{rr} g(x,\omega)\leq 0,\ h(x,\omega)=0, \ \ \ {~} & \\
\left. \begin{array}{r} G_i(x,\omega)\geq 0, \ H_i(x,\omega)\geq  0  \\
       G_i(x,\omega)H_i(x,\omega)=0  \end{array} \right\} &  \! i=1,\ldots,m_C,
\end{array}
\]
where $f:\R^n\times\R^s\to\R$,
$g:\R^n\times\R^s\to\R^{m_I}$, $h:\R^n\times\R^s\to\R^{m_E}$ and $G,H:\R^n\times\R^s\to\R^{m_C}$.

The problem  $MPEC(\omega)$ fits into our setting \eqref{EqParOptProb} with
\[
q:=(g,h, -(G_1,H_1) , \ldots  - (G_{m_C},H_{m_C}) ),
\]
\[P:=\R^{m_I}_-\times\{0\}^{m_E}\times Q_{EC}^{m_C},\]
where $Q_{EC}:=\{(a,b)\in\R^2_-\mv ab=0\}$. Since $Q_{EC}$ is the union of the  convex polyhedra $\R_-\times\{0\}$ and $\{0\}\times\R_-$, $P$ is the union of $2^{m_C}$ polyhedra.
\end{example}

\begin{example}\label{ExMPVC}
Another prominent example is the mathematical program with
vanishing constraints (MPVC)
\[
MPVC(\omega)\qquad \min f(x,\omega) \quad \mbox{subject to} \quad
\begin{array}[t]{rr} g(x,\omega)\leq 0,\ h(x,\omega)=0, \ \ \ {~} & \\
\left. \begin{array}{r} G_i(x,\omega)\geq 0  \\
       G_i(x,\omega)H_i(x,\omega)\le 0  \end{array} \right\}&  \! i=1,\ldots,m_V,
\end{array}
\]
where $f:\R^n\times\R^s\to\R$,
$g:\R^n\times\R^s\to\R^{m_I}$, $h:\R^n\times\R^s\to\R^{m_E}$ and
$G,H:\R^n\times\R^s\to\R^{m_V}$. For more details on MPVCs  we
refer the reader to \cite{AchKa08,HoKa08}.

Again, the problem  $MPVC(\omega)$ can be written in the form
\eqref{EqParOptProb} with
\[
q:=(g,h, (G_1,H_1) , \ldots  (G_{m_V},H_{m_V}) ),
\]
\[P:=\R^{m_I}_-\times\{0\}^{m_E}\times Q_{VC}^{m_V},\]
where $Q_{VC}:=\{(a,b)\in\R_+\times\R\mv ab\leq 0\}$ is the union
of the two convex polyhedra $\R_+\times\R_-$ and $\{0\}\times\R_+$.
\end{example}

\if{
If $f$ is partially differentiable with respect to $x$ and
setting $F(x,\omega):=\nabla_xf(x,\omega)$, then the first order
optimality conditions at a local minimizer $x$ for $P(\omega)$ can
be written as a generalized equation
\begin{equation}\label{EqGE}
GE(\omega)\qquad0\in F(x,\omega)+\hat N(x;{\cal F}(\omega)),
\end{equation}
where  $\hat N(x;{\cal F}(\omega))$  stands for the {\em Fr\'echet
normal cone} to the set ${\cal F}(\omega)$ at $x$ and
\begin{equation}\label{EqOmega}
{\cal F}(\omega):=\{x\in\R^n\mv q(x,\omega)\in P\}
\end{equation}
denotes the feasible region of the  problem $P(\omega)$. We also
consider the generalized equation (\ref{EqGE}) for arbitrary continuous mappings
$F:\R^n\times\Omega\to\R^n$.}\fi
If $f$ is partially differentiable with respect to $x$, then the first order
optimality conditions at a local minimizer $x$ for $P(\omega)$ can
be written as a generalized equation
\[0\in \nabla_x f(x,\omega)+\hat N(x;{\cal F}(\omega)),\]
where  $\hat N(x;{\cal F}(\omega))$  stands for the {\em Fr\'echet
normal cone} to the set ${\cal F}(\omega)$ at $x$ and
\begin{equation}\label{EqOmega}
{\cal F}(\omega):=\{x\in\R^n\mv q(x,\omega)\in P\}
\end{equation}
denotes the feasible region of the  problem $P(\omega)$. We also
consider the generalized equation
\begin{equation}\label{EqGE}
GE(\omega)\qquad0\in F(x,\omega)+\hat N(x;{\cal F}(\omega)),
\end{equation}
for arbitrary continuous mappings $F:\R^n\times\Omega\to\R^n$.

Throughout this paper we will make the following assumption:
\begin{assumption}\label{AssBasic}There are neighborhoods $U$ of $\xb$ and $W$ of $\ob$ such
that $f$ and $q$ are twice partially differentiable with respect
to $x$, $F$ is partially differentiable with respect to $x$ on
$U\times W$,  $F(x,\omega)$, $q(x,\omega)$, $\nabla_x
f(x,\omega)$, $\nabla_x q(x,\omega)$, $\nabla_x F(x,\omega)$,
$\nabla_x^2 f(x,\omega)$ and $\nabla_x^2 q(x,\omega)$ are
continuous at $(\xb,\ob)$ and $\nabla_x^2 f(\cdot,\omega)$,
$\nabla_x^2q(\cdot,\omega)$ are continuous on $U$ for every
$\omega\in W$.
\end{assumption}

Given a fixed parameter  $\ob$ and  a solution $\xb$ of $P(\ob)$
respectively $GE(\ob)$, we are interested in
estimates of the distance of solutions $x$ of problem $P(\omega)$
respectively $GE(\omega)$ to $\xb$ for parameters $\omega$
belonging to some neighborhood of $\ob$.

We will present such  estimates in terms of the mappings  $e_l,\tau_l,\hat\tau_l:\Omega\to\R$, $l=1,2$, given by
\[e_l(\omega)= \norm{\nabla_x q(\xb,\omega)-\nabla_x q(\xb,\bar\omega)}+{\norm{q(\xb,\omega)-q(\xb,\bar\omega)}}^{\frac1l}\]
and
\[\tau_l(\omega):=\norm{\nabla_x f(\xb,\omega)-\nabla_x f(\xb,\ob)}+e_l(\omega),\quad \hat \tau_l(\omega):=\norm{F(\xb,\omega)- F(\xb,\ob)}+e_l(\omega).\]

The quantities $\tau_l(\omega)$ and
$\hat \tau_l(\omega)$  measure
how much the problem data at the reference point $\xb$ for the
perturbed problem $P(\omega)$ and $GE(\omega)$, respectively, differ
from that for the unperturbed problem $P(\ob)$ and $GE(\ob)$, respectively.

In case that we can bound the distance of a solution $x$ of
$P(\omega)$ ($GE(\omega)$)  to $\xb$ by the estimate
$L\tau_1(\omega)$ ($L\hat\tau_1(\omega)$), where $L$ denotes some
constant, we speak of {\em upper Lipschitz stability} of the
solutions. We speak of {\em upper H\"older stability} when a bound
of the form $\norm{x-\xb}\leq L\tau_2(\omega)$
(or $\norm{x-\xb}\leq L\hat\tau_2(\omega)$) is available. This notation
is motivated by the situation that $\Omega$ is a metric space
equipped with the metric $d$, and $q(\xb,\cdot)$, $\nabla_x
q(\xb,\cdot)$ and $\nabla_x f(\xb,\cdot)$ (or $F(\xb,\cdot)$)
are Lipschitz near $\ob$, because in this
circumstance the bounds are of the form $\norm{x-\xb}\leq
Ld(\omega,\ob)$ and $\norm{x-\xb}\leq L\sqrt{d(\omega,\ob)}$,
respectively. Note that in this case the property of upper Lipschitz stability is also called {\em isolated calmness} in the literature (see, e.g.,\cite{DoRo09}).

Many quantitative stability results are known for the parameter dependent
nonlinear programming problem $NLP(\omega)$. We refer to the
monographs \cite{BonSh00,DoRo09,KlKum02} and the references therein.
Compared with the huge amount of stability results for $NLP$, very
little research has been done with the stability of $P(\omega)$.
Most of the results are known for $MPEC(\omega)$, see e.g.
\cite{AruIz05,Iz04,JoShikStef12,SchSch00,Shik12}.
Stability of M-stationarity solutions was characterized in the recent paper \cite{CerOutPi14} for a special type of problems with complementarity constraints. Sensitivity and stability results for MPVC are given in \cite{IzSo09}. In the
recent paper \cite{GuLiYe12}, Guo, Lin and Ye  presented various
stability results for  more general problems. In particular, they
proved upper Lipschitz stability for stationary pairs consisting
of stationary solutions and associated multipliers under the
structural assumption that the graph of the limiting normal cone
mapping to the set $P$ is the union of finitely many convex
polyhedra.

In contrary to the stability results of \cite{GuLiYe12}, we focus
our interest on the stability of solutions of (\ref{EqGE})
on its own  and not of
stationary pairs. This has the advantage that our theory is also
applicable in case when  multipliers do not exist or the
multipliers do not behave continuous, cf. Examples \ref{ExDemoMPEC}
and \ref{JoShikStef}  below.

Our results are mainly based on  characterizations of metric
subregularity as introduced in \cite{Gfr11,Gfr13a,Gfr13b,Gfr14a}.
The main constraint qualifications used in this paper are that, at
the reference point $\xb$, either the first order or the second
order sufficient conditions for metric subregularity are fulfilled
for the problem $P(\ob)$. Although the property of metric
subregularity is not stable in general, we will see that the
sufficient conditions of order $l$, $l=1,2$, for metric
subregularity guarantee some stability. In particular, we will
prove that there is some constant $\gamma$ such that for all
points $x$ feasible for the problem $P(\omega)$ and satisfying
$\norm{x-\xb}>\gamma e_l(\omega)$, the constraints of $P(\omega)$
are metrically regular near $(x,0)$ with some uniform modulus.
This result allows us to divide the solution sets of $P(\omega)$, similarly for
$GE(\omega)$, into two parts: one part is contained in
a ball around $\xb$ with radius $\gamma e_l(\omega)$ and behaves
upper Lipschitz ($l=1$) or H\"older ($l=2$) stable by the
definition, whereas the other part is outside this ball and we can
assume metric regularity. Moreover, we can show that locally
optimal solutions for the perturbed problems $P(\omega)$ exist,
provided $\omega$ is sufficiently close to $\ob$. The obtained
results are partially new even in case of $NLP(\omega)$.

The rest of the paper is organized as follows: In section 2 we
recall the basic definitions of metric (sub)regularity and their
directional versions, together with the characterization of these
properties by objects from generalized differentiation. In section
3 we give some stability results for the feasible point mapping
${\cal F}$. 
Section 4 is devoted to the stability behavior
of solutions of the generalized equation $GE(\omega)$ and the
optimization problem $P(\omega)$, respectively. In section 5 we
apply the obtained results to the special problem $MPEC(\omega)$
by explicitly calculating all objects from generalized
differentiation. Moreover we present some examples.

\section{Preliminaries}
We start by recalling several definitions and results from variational analysis.
Let $\Gamma\subset\R^n$ be an arbitrary closed set and $x\in\Gamma$.
The {\em contingent}  (also called {\em tangent} or {\em Bouligand}) {\em cone} to $\Gamma$ at $x$, denoted by $T(x;\Gamma)$, is
given by
\[T(x;\Gamma):=\{u\in \R^n\mv \exists (u_k)\to u, (t_k)\downarrow 0: x+t_ku_k\in\Gamma\}.\]
We denote by
\begin{equation}\label{DefEpsNormals}
\hat N(x;\Gamma)=\{\xi \in\R^n\mv \limsup_{x'\setto \Gamma
x}\frac{\xi^T( x'-x)}{\norm{x'-x}}\leq 0\}
\end{equation}
the {\em regular} (or {\em Fr\'echet}) {\em normal cone} to $\Gamma$.
Finally, the {\em limiting} (or {\em basic/Mordukhovich}) {\em normal cone} to $\Gamma$ at $x$ is
defined by
\[N(x;\Gamma):=\{\xi \mv \exists  (x_k)\setto{\Gamma}x,\ (\xi_k)\to \xi: \xi_k\in \hat N(x_k;\Gamma) \forall k\}.\]
If $x\not\in \Gamma$, we put $T(x;\Gamma)=\emptyset$, $\hat N(x;\Gamma)=\emptyset$ and $N(x;\Gamma)=\emptyset$.

The limiting normal cone is generally nonconvex, whereas the
regular normal cone is always convex. In the case of a convex
set $\Gamma$, both the regular normal cone and the limiting normal cone
coincide with the standard normal cone from convex analysis and,
moreover, the contingent cone is equal to the tangent cone in the
sense of convex analysis.

Note that $\xi\in\hat N(x;\Gamma)$ $\Leftrightarrow$ $\xi^Tu\leq 0\ \forall u\in T(x;\Gamma)$, i.e., $\hat N(x;\Gamma)$ is the polar cone of $T(x;\Gamma)$.

Given a multifunction $M:\R^n\rightrightarrows \R^m$ and a point $(\xb,\yb)\in \Gr M:=\{(x,y)\in X\times Y\mv y\in M(x)\}$ from its graph, the {\em coderivative} of $M$ at $(\xb,\yb)$ is a multifunction $D^\ast M(\xb,\yb):\R^m\rightrightarrows \R^n$ with the values $D^\ast M(\xb,\yb)(\eta):=\{\xi\in \R^n\mv (\xi,-\eta)\in N((\xb,\yb);\Gr M)\}$, i.e., $D^\ast M(\xb,\yb)(\eta)$ is the collection of all $\xi\in \R^n$ for which there are sequences  $(x_k,y_k)\to (\xb,\yb)$ and $(\xi_k,\eta_k)\to(u,v)$ with $(\xi_k, -\eta_k)\in \hat N((x_k,y_k); \Gr M)$.

For more details we refer to the monographs \cite{Mo06a,RoWe98}

The following directional versions of these limiting constructions were introduced in \cite{Gfr13a}, see also \cite{Gfr14a} for the finite dimensional setting. Given a direction $u\in \R^n$, the limiting normal cone to a subset $\Gamma\subset \R^n$ in direction $u$ at $x\in\Gamma$ is
defined by
\[N(x;\Gamma;u):=\{\xi\in\R^n\mv \exists (t_k)\downarrow 0,\ (u_k)\to u,\ (\xi_k)\to \xi: \xi_k\in \hat N(x+t_ku_k;\Gamma) \forall k\}.\]
For a multifunction $M:\R^n\rightrightarrows \R^m$ and a direction $(u,v)\in \R^n\times \R^m$, the coderivative  of $M$ in direction $(u,v)$ at $(\xb,\yb)\in \Gr M$ is defined as the multifunction $D^\ast M((\xb,\yb);(u,v)):\R^m\rightrightarrows\R^n$ given by $D^\ast M((\xb,\yb);(u,v))(\eta):=\{\xi\in \R^n\mv (\xi,-\eta)\in N((\xb,\yb);\Gr M; (u,v))\}$.

Note that, by the definition, we have $N(x;\Gamma;0)=N(x;\Gamma)$ and $D^\ast M((\xb,\yb);(0,0))=D^\ast M(\xb,\yb)$. Further, $N(x;\Gamma;u)\subset N(x;\Gamma)$ for all $u$ and $N(x;\Gamma;u)=\emptyset$ if $u\not\in T(x;\Gamma)$.

We now turn our attention to the set $P$ from problem \eqref{EqParOptProb}.
For every $y\in P$, we denote by ${\cal P}(y):=\{i\in\{1,\ldots,p\}\mv y\in P_i\}$ the index set of polyhedra containing $y$, and for each $i\in{\cal P}(y)$, we denote by ${\cal A}_i(y):=\{j\in\{1,\ldots,m_i\}\mv a_{ij}^Ty=b_{ij}\}$ the index set of active constraints. Then for every $y\in P$ we have
\[T(y;P)=\bigcup_{i\in{\cal P}(y)} T(y;P_i)=\bigcup_{i\in{\cal P}(y)}\{z\in\R^m\mv a_{ij}^Tz\leq 0, j\in{\cal A}_i(y)\},\]
\[\hat N(y;P)=\bigcap_{i\in{\cal P}(y)}\hat N(y;P_i)=\bigcap_{i\in{\cal P}(y)}\{\sum_{j\in{\cal A}_i(y)}\mu_{ij}a_{ij}\mv \mu_{ij}\geq 0, j\in{\cal A}_i(y)\}.\]
Some formulas for the limiting normal cone respectively its directional counterpart can be found in \cite{Gfr13b}.

The following lemma will be useful for applications.

\begin{lemma}
\label{LemTangDirNormalConeCartProduct}Let $\Gamma=\Gamma_1\times\ldots\times \Gamma_l\subset\R^{m_1}\times\ldots\times\R^{m_l}$ be the Cartesian product of the closed sets $\Gamma_i$ and  $y=(y_1,\ldots,y_l)\in\Gamma$. Then
\begin{equation}\label{EqInclTangCone}
T(y;\Gamma)\subset T(y_1;\Gamma_1)\times\ldots\times T(y_l;\Gamma_l),
\end{equation}
and for every $u=(u_1,\ldots,u_l)\in T(y;\Gamma)$ one has
\begin{equation}\label{EqInclDirNormalCone}
N(y;\Gamma;u)\subset N(y_1;\Gamma_1;u_1)\times\ldots\times N(y_l;\Gamma_l;u_l).
\end{equation}
Furthermore, equality holds in both inclusions if  $\Gamma$  is the union of finitely many convex polyhedra.
\end{lemma}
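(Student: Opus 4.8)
The plan is to prove the two inclusions first in general, and then establish equality under the polyhedral hypothesis.

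\paragraph{The tangent cone inclusion.}
For \eqref{EqInclTangCone}, I would argue directly from the definition of the contingent cone. Take $u=(u_1,\ldots,u_l)\in T(y;\Gamma)$, so there are $(u^k)\to u$ and $t_k\downarrow 0$ with $y+t_ku^k\in\Gamma$. Writing $u^k=(u^k_1,\ldots,u^k_l)$, the product structure of $\Gamma$ gives $y_i+t_ku^k_i\in\Gamma_i$ for each $i$, and since $u^k_i\to u_i$, this shows $u_i\in T(y_i;\Gamma_i)$. Hence $u\in T(y_1;\Gamma_1)\times\ldots\times T(y_l;\Gamma_l)$.

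\paragraph{The directional normal cone inclusion.}
For \eqref{EqInclDirNormalCone}, fix $u=(u_1,\ldots,u_l)\in T(y;\Gamma)$ and take $\xi=(\xi_1,\ldots,\xi_l)\in N(y;\Gamma;u)$. By definition there are $t_k\downarrow 0$, $(u^k)\to u$ and $(\xi^k)\to\xi$ with $\xi^k\in\hat N(y+t_ku^k;\Gamma)$. The key fact I would invoke is that the regular normal cone of a Cartesian product of closed sets is the product of the regular normal cones: $\hat N(z;\Gamma)=\hat N(z_1;\Gamma_1)\times\ldots\times\hat N(z_l;\Gamma_l)$ for $z=(z_1,\ldots,z_l)\in\Gamma$ (this follows readily from \eqref{DefEpsNormals} since the distance-type quotient splits coordinatewise, or equivalently from the polarity $\hat N=T^\circ$ combined with the tangent cone of a product being the product of tangent cones when each $\Gamma_i$ is, say, geometrically derivable — but actually the inclusion $\hat N\supset\prod\hat N_i$ is all one needs and it is elementary from the $\limsup$ definition; the reverse $\hat N(z;\Gamma)\subset\prod\hat N(z_i;\Gamma_i)$ is likewise immediate by testing with $z'$ differing from $z$ in a single block). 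Writing $\xi^k=(\xi^k_1,\ldots,\xi^k_l)$, we therefore have $\xi^k_i\in\hat N(y_i+t_k u^k_i;\Gamma_i)$ for each $i$, and passing to the limit $\xi^k_i\to\xi_i$, $u^k_i\to u_i$ yields $\xi_i\in N(y_i;\Gamma_i;u_i)$. This gives the claimed inclusion.

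\paragraph{Equality in the polyhedral case and the main obstacle.}
When $\Gamma=\bigcup_{s} D_s$ is a union of finitely many convex polyhedra, each $\Gamma_i$ is likewise a union of finitely many convex polyhedra (project the defining inequalities), and I expect the main work to be here. For the tangent cone: given $u_i\in T(y_i;\Gamma_i)$ for every $i$, I would use the polyhedrality to get, for each $i$, a polyhedron $D^i\ni y_i$ (a face-defining piece of $\Gamma_i$) with $y_i+tu_i\in D^i$ for all small $t\ge0$; then $y+t(u_1,\ldots,u_l)$ lies in the product of these pieces, which is contained in $\Gamma$ for $t$ small, giving $u\in T(y;\Gamma)$ and hence equality in \eqref{EqInclTangCone}. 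For the directional normal cone the argument is the analogous but more delicate point: I would use the known local structure of $N(\cdot;\Gamma;\cdot)$ for polyhedral unions — namely that near $y$ and in direction $u$ only the ``critical'' polyhedra $D_s$ with $y\in D_s$ and $u\in T(y;D_s)$ matter, and for a single convex polyhedron $D$ one has $N(y;D;u)=N(y+tu;D)$ for all small $t>0$, which is a fixed polyhedral cone. Combining the product rule for regular normal cones of the (now fixed) polyhedral pieces $D^i_s$ with the fact that for a product of convex polyhedra the regular normal cone already equals the full limiting normal cone, one reconstructs any element of $\prod_i N(y_i;\Gamma_i;u_i)$ as a limit along a common sequence $y+t_k u$, proving $\supset$ in \eqref{EqInclDirNormalCone}. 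The subtlety I anticipate — and the step to handle carefully — is the bookkeeping that the pieces $D^i_s$ chosen independently in each coordinate can be realized simultaneously by one product polyhedron sitting inside $\Gamma$; this is exactly where finiteness of the union and the directional restriction to critical polyhedra are used, and it is what makes the polyhedral hypothesis essential (the equality genuinely fails for general closed sets).
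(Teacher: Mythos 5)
Your two inclusions are fine and follow essentially the paper's route (the paper quotes the product formula for regular normal cones from Rockafellar--Wets, Proposition 6.41, and you reprove it directly); your argument for equality in \eqref{EqInclTangCone} via a product polyhedron $D^1\times\ldots\times D^l\subset\Gamma$ is also correct. The gap is in the only genuinely nontrivial part, the inclusion $\supset$ in \eqref{EqInclDirNormalCone}. Your mechanism is to realize every $\xi_i\in N(y_i;\Gamma_i;u_i)$ as a limit of regular normals \emph{along the exact ray} $y_i+tu_i$, and then take the common sequence $y+t_ku$. This is false already for a single factor: take $\Gamma_i=(\R_-\times\{0\})\cup(\{0\}\times\R_-)$ (the set $Q_{EC}$ of the paper), $y_i=(0,0)$ and $u_i=0$. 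Then $N(y_i;\Gamma_i;0)=N(y_i;\Gamma_i)$ contains $(0,-1)$ (regular normal at the points $(-s,0)$, $s>0$), while $\hat N(y_i+t\cdot 0;\Gamma_i)=\hat N(y_i;\Gamma_i)=\R_+\times\R_+$ for every $t$, so $(0,-1)$ is not a limit of regular normals along the ray. The definition of the directional limiting cone genuinely needs the perturbed directions $u_{ik}\to u_i$, and the whole difficulty of the lemma is to synchronize the independent scalars $t_{ik}$ of the $l$ coordinate sequences while keeping the memberships $\xi_{ik}\in\hat N(\cdot;\Gamma_i)$; your identity $N(y;D;u)=N(y+tu;D)$ for a single convex polyhedron does not remove this, because for the unions $\Gamma_i$ the relevant cone at $y_i+tu_i$ is the limiting, not the regular, normal cone, and a product formula for limiting normal cones of unions of polyhedra is exactly the $u=0$ case of the statement you are trying to prove.

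A second, related flaw: the appeal to ``one product polyhedron sitting inside $\Gamma$'' together with ``regular $=$ limiting for a product of convex polyhedra'' points in the wrong direction for normals. Since $D^1\times\ldots\times D^l\subset\Gamma$, one has $\hat N(z;\Gamma)\subset\hat N(z;D^1\times\ldots\times D^l)$ at common points $z$, so being normal to the convenient product piece does not certify normality to $\Gamma$; you must verify normality to \emph{all} polyhedra of the union containing the chosen points. The paper's proof resolves precisely these two issues: it keeps the perturbed sequences $y_i+t_{ik}u_{ik}$, passes to subsequences along which the index sets ${\cal P}(z_k)$ and the active sets ${\cal A}_i(z_k)$ are constant, observes that then $\hat N((1-\alpha)y+\alpha z_k;\Gamma)=\hat N(z_k;\Gamma)$ for all $\alpha\in(0,1)$, and uses this invariance to rescale each coordinate to the common parameter $t_k=\min_i t_{ik}$, producing a single sequence $y+t_k(u_{1k},\ldots,u_{lk})\in\Gamma$ (with perturbed, not exact, directions) at which $(\xi_{1k},\ldots,\xi_{lk})$ is a regular normal to $\Gamma$. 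Some argument of this synchronization-plus-active-set-stability type is needed; as written, your proposal does not supply it.
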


\begin{proof}The inclusion \eqref{EqInclTangCone} can be found in \cite[Proposition 6.41]{RoWe98}, and the inclusion \eqref{EqInclDirNormalCone} follows immediately from the formula for the regular normal cone from this proposition and the definition of the directional normal cone. To show equality, assume that $\Gamma$ coincides with the set $P$ from \eqref{EqParOptProb},  and let  $(u_1,\ldots,u_l)\in T(y_1;\Gamma_1)\times\ldots\times T(y_l;\Gamma_l)$ and $(\xi_1,\ldots,\xi_l)\in N(y_1;\Gamma_1;u_1)\times\ldots\times N(y_l;\Gamma_l;u_l)$. Then there are sequences $(t_{ik})\downarrow 0$, $(u_{ik})\to u_i$, $(\xi_{ik})\to\xi_i$, $i=1,\ldots,l$, with $z_k:=(y_1+t_{1k}u_{1k},\ldots,y_l+t_{lk}u_{lk})\in\Gamma_1\times\ldots\times\Gamma_l$ and $(\xi_{1k},\ldots,\xi_{lk})\in \hat N(y_1+t_{1k}u_{1k};\Gamma_1)\times\ldots\times\hat N(y_l+t_{lk}u_{lk};\Gamma_l)$ for all $k$. By passing to  subsequences we can assume that there are  index sets ${\cal P}\subset\{1,\ldots,p\}$ and ${\cal A}_i$, $i\in{\cal P}$, such that ${\cal P}={\cal P}(z_k)$ and ${\cal A}_i={\cal A}_i(z_k)$, $i\in{\cal P}$, hold for all $k$. Furthermore, we can assume that for each $i\not\in{\cal P}$ there is an index $j_i$ with $a_{ij_i}z_k>b_{ij_i}$ for all $k$.
Since the convex polyhedra $P_j$ are closed, for each $j\in{\cal P}$ we also have $\lim_k z_k= y\in P_j$ and therefore $(1-\alpha)y +\alpha z_k\in P_j$ $\forall\alpha\in[0,1]$, $\forall k$. Further, for every $\alpha\in (0,1)$ and every $k$ we have ${\cal A}_i={\cal A}_i((1-\alpha)y +\alpha z_k)$ and $a_{ij_i}((1-\alpha)y +\alpha z_k)>b_{ij_i}$, $i\not\in{\cal P}$, showing $\hat N((1-\alpha)y +\alpha z_k;\Gamma)=\hat N(z_k;\Gamma)$ and, together with \cite[Proposition 6.41]{RoWe98},
\begin{eqnarray*}\hat N(z_k;\Gamma)&=&\hat N(y_1+t_{1k}u_{1k};\Gamma_1)\times\ldots\times\hat N(y_l+t_{lk}u_{lk};\Gamma_l)\\
&=&\hat N((1-\alpha)y +\alpha z_k;\Gamma)=\hat N(y_1+\alpha t_{1k}u_{1k};\Gamma_1)\times\ldots\times \hat N(y_l+\alpha t_{lk}u_{lk};\Gamma_l) .
\end{eqnarray*}
 Now let $t_k:=\min_i t_{ik}$. Then for each $i=1,\ldots,l$ we have
\[(1-\frac{t_k}{t_{ik}})y+\frac{t_k}{t_{ik}}z_k=(y_1+t_{1k}\frac{t_k}{t_{ik}}u_{1k},\ldots, y_i+t_ku_{ik},\ldots,y_l+t_{lk}\frac{t_k}{t_{ik}}u_{lk})\in\Gamma_1\times\ldots\times\Gamma_l,\]
\[(\xi_{1k},\ldots,\xi_{lk})\in \hat N(z_k,\Gamma) = \hat N(y_1+t_{1k}\frac{t_k}{t_{ik}}u_{1k};\Gamma_1)\times\ldots\times \hat N(y_i+t_ku_{ik};\Gamma_i)\times\ldots\times\hat N(y_l+t_{lk}\frac{t_k}{t_{ik}}u_{lk};\Gamma_l),\]
showing $y_i+t_ku_{ik}\in\Gamma_i$ and $\xi_{ik}\in\hat N(y_i+t_ku_{ik};\Gamma_i)$.
Hence $\tilde z_k:=(y_1+t_ku_{1k},\ldots,y_l+{t_k}u_{lk})\in \Gamma_1\times\ldots\times\Gamma_l=\Gamma$ and, by using \cite[Proposition 6.41]{RoWe98} again, $(\xi_{1k},\ldots,\xi_{lk})\in \hat N(y_1+t_ku_{1k};\Gamma_1)\times\ldots\times \hat N(y_l+{t_k}u_{lk};\Gamma_l)=\hat N(\tilde z_k;\Gamma)$, showing $(u_1,\ldots,u_l)\in T(x;\Gamma)$ and $(\xi_1,\ldots,\xi_l)\in  N(y;\Gamma;u)$.
\qed\end{proof}

In this paper we are mainly concerned with multifunctions $M:\R^n\rightrightarrows \R^m$ of the form $M(x)=q(x)-\Gamma$, where $q:\R^n\to\R^m$ is smooth and single valued. For this special type of multifunctions, we now give a formula for the directional limiting coderivative of $M$.
\begin{lemma}\label{LemDirCoderiv}
Let the multifunction $M:\R^n\rightrightarrows R^m$ be given by $M(x):=q(x)-\Gamma$, where $q:R^n\to\R^m$ is continuously differentiable and $\Gamma\subset\R^m$ is a closed  set. Further, let $\xb\in q^{-1}(\Gamma)$ and $(u,v)\in\R^n\times\R^m $ be given. Then
\begin{equation}\label{EqCoDeriv}D^\ast M((\xb;0);(u,v))(\lambda)=\{\nabla q(\xb)^T\lambda\mv \lambda\in N(q(\xb);\Gamma; \nabla q(\xb)u-v)\}.
\end{equation}
\end{lemma}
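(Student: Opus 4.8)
The plan is to realize $\Gr M$ as the preimage of a product set under a \emph{global} $C^1$ diffeomorphism and then transport the directional limiting normal cone through that diffeomorphism. Define $\Phi:\R^n\times\R^m\to\R^n\times\R^m$ by $\Phi(x,y):=(x,q(x)-y)$. Since $q$ is continuously differentiable, $\Phi$ is a $C^1$ diffeomorphism with inverse $\Phi^{-1}(x,z)=(x,q(x)-z)$, and one checks directly that
\[\Gr M=\{(x,y)\mv q(x)-y\in\Gamma\}=\Phi^{-1}(\R^n\times\Gamma),\]
with $\Phi(\xb,0)=(\xb,q(\xb))\in\R^n\times\Gamma$ because $\xb\in q^{-1}(\Gamma)$. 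The derivative $\nabla\Phi(\xb,0)$ maps $(u,v)$ to $(u,\nabla q(\xb)u-v)$, and its transpose $\nabla\Phi(\xb,0)^T$ maps $(\zeta,\mu)$ to $(\zeta+\nabla q(\xb)^T\mu,-\mu)$.

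The key step is a transformation rule for directional limiting normal cones under a $C^1$ diffeomorphism $\Phi$: for every closed set $S$, every $z\in\Phi^{-1}(S)$ and every $w$,
\[N\big(z;\Phi^{-1}(S);w\big)=\nabla\Phi(z)^T\,N\big(\Phi(z);S;\nabla\Phi(z)w\big).\]
I would derive this from the classical identity $\hat N(z';\Phi^{-1}(S))=\nabla\Phi(z')^T\hat N(\Phi(z');S)$ for the regular normal cone (see, e.g., \cite{RoWe98}) by taking $z'=z+t_kw_k$ with $t_k\downarrow0$, $w_k\to w$, and passing to the limit. Two observations make the limit go through: first, $\Phi(z+t_kw_k)=\Phi(z)+t_k\nu_k$ with $\nu_k\to\nabla\Phi(z)w$, which is just differentiability of $\Phi$ together with boundedness of $(w_k)$; second, if $\xi_k\in\hat N(z+t_kw_k;\Phi^{-1}(S))$ with $\xi_k\to\xi$, then the associated multipliers $\eta_k:=(\nabla\Phi(z+t_kw_k)^T)^{-1}\xi_k\in\hat N(\Phi(z+t_kw_k);S)$ are automatically bounded and converge to $(\nabla\Phi(z)^T)^{-1}\xi$, because $\nabla\Phi$ is continuous and invertible near $z$ — this is precisely where the diffeomorphism property is exploited. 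The converse inclusion follows by applying the same argument to $\Phi^{-1}$. (Alternatively, this directional chain rule may simply be quoted from \cite{Gfr13a,Gfr14a}.)

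It then remains to specialize to $S=\R^n\times\Gamma$, $z=(\xb,0)$ and to invoke the elementary product formula $N\big((\xb,q(\xb));\R^n\times\Gamma;(u,w_2)\big)=\{0\}\times N(q(\xb);\Gamma;w_2)$, which holds because $\hat N\big((x,y);\R^n\times\Gamma\big)=\{0\}\times\hat N(y;\Gamma)$ and a perturbation of the first block stays inside $\R^n$, so the splitting persists in the limit (the easy half of Lemma~\ref{LemTangDirNormalConeCartProduct}, trivial here as one factor is the whole space). Since $\nabla\Phi(\xb,0)(u,v)=(u,\nabla q(\xb)u-v)$, the two displayed formulas combine to
\[N\big((\xb,0);\Gr M;(u,v)\big)=\{(\nabla q(\xb)^T\mu,-\mu)\mv\mu\in N(q(\xb);\Gamma;\nabla q(\xb)u-v)\}.\]
Unwinding the definition of the directional coderivative, $\zeta\in D^\ast M((\xb,0);(u,v))(\la)$ iff $(\zeta,-\la)$ belongs to this set, i.e.\ iff $\la\in N(q(\xb);\Gamma;\nabla q(\xb)u-v)$ and $\zeta=\nabla q(\xb)^T\la$, which is exactly \eqref{EqCoDeriv}. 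The only genuine obstacle is the directional normal cone transformation rule of the second paragraph; everything else is bookkeeping. An equivalent route, if preferred, is to write $\Gr M=g^{-1}(\Gamma)$ for the submersion $g(x,y)=q(x)-y$ and use the directional chain rule under a submersion, but then one needs a Lyusternik--Graves type argument to produce the approximating points, whereas the global diffeomorphism $\Phi$ makes that unnecessary.
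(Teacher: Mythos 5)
Your argument is correct, but it is organized differently from the paper's. The paper proves \eqref{EqCoDeriv} directly from the definition of the directional limiting normal cone: it records the explicit formula $\widehat N((x,q(x)-\gamma);\Gr M)=\{(\nabla q(x)^T\eta,-\eta)\mv \eta\in\widehat N(\gamma;\Gamma)\}$ and then constructs the approximating sequences by hand in both directions, setting $\gamma_k:=q(\xb+t_ku_k)-t_kv_k$ for the inclusion ``$\subset$'' and choosing $u_k:=u$, $v_k:=(q(\xb+t_ku_k)-\gamma_k)/t_k$ for ``$\supset$''. You instead factor the statement through the involutive diffeomorphism $\Phi(x,y)=(x,q(x)-y)$, a general transformation rule $N(z;\Phi^{-1}(S);w)=\nabla\Phi(z)^TN(\Phi(z);S;\nabla\Phi(z)w)$ for directional limiting normal cones, and the product formula for $\R^n\times\Gamma$; the latter is indeed unproblematic here precisely because one factor is the whole space (in general only the inclusion of Lemma~\ref{LemTangDirNormalConeCartProduct} holds, equality requiring the polyhedral structure and the rescaling argument used there). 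The computational core is the same: your limit argument for ``$\subset$'' uses exactly the same observations as the paper (differentiability of $q$ to identify the limit direction, boundedness of the multipliers $\eta_k$ coming from invertibility and continuity of $\nabla\Phi$, which in the paper is implicit since $\eta_k=\lambda_k$ is already the convergent sequence), and your ``$\supset$'' via $\Phi^{-1}$ reproduces in disguise the paper's choice of $v_k$. What your route buys is modularity and reusability of the abstract chain rule (and a clean identification of where the diffeomorphism property enters); what the paper's route buys is a shorter, self-contained proof that never needs the abstract rule, only the one-line formula for $\widehat N(\cdot;\Gr M)$. One caveat: you should not merely ``quote'' the directional chain rule from \cite{Gfr13a,Gfr14a} without checking it appears there in this form; the sketch you give of its proof is sound and should be retained as the actual justification.
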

\begin{proof}Let $x^\ast\in D^\ast M((\xb;0);(u,v))(\lambda)$, that is $(x^\ast,-\lambda)\in N((\xb,0);\Gr M; (u,v))$ by the definition, and consider corresponding sequences $(t_k)\downarrow 0$, $(u_k,v_k)\to(u,v)$ and $(x_k^\ast,\lambda_k)\to(x^\ast,\lambda)$ with $(x_k^\ast,-\lambda_k)\in \widehat N((\xb+t_ku_k,t_kv_k);\Gr M)$. Hence $(\xb+t_ku_k,t_kv_k)\in\Gr M$ implying $\gamma_k:=q(\xb+t_ku_k)-t_kv_k\in \Gamma$.
Since for all $x$ and all $\gamma\in\Gamma$
\begin{equation}\label{EqRegNormalConeGrM}\widehat N((x,q(x)-\gamma);\Gr M)=\{(\nabla q(x)^T\eta,-\eta)\mv \eta\in\widehat N(\gamma;\Gamma)\},\end{equation}
we obtain $\lambda_k\in \widehat N(\gamma_k;\Gamma)$ and $x_k^\ast=\nabla q(\xb+t_ku_k)^T\lambda_k$. Hence $x^\ast=\nabla q(\xb)^T\lambda$ and $\lambda\in N(q(\xb);\Gamma;\nabla q(\xb)u-v)$, since
$\lim_{k\to\infty}(\gamma_k-q(\xb))/t_k=\nabla q(\xb)u-v$,
and $D^\ast M((\xb;0);(u,v))(\lambda)\subset\{\nabla q(\xb)^T\lambda\mv \lambda\in N(q(\xb);\Gamma; \nabla q(\xb)u-v)\}$ follows.\\
To show the converse inclusion, we fix an arbitrary element $\lambda\in N(q(\xb);\Gamma; \nabla q(\xb)u-v)$ and consider sequences $(t_k)\downarrow 0$, $(\gamma_k)\to\gamma$ and $(\lambda_k)\to\lambda$ with $\lambda_k\in\widehat N(\gamma_k;\Gamma)$ and $\lim_{k\to\infty}(\gamma_k-q(\xb))/t_k=\nabla q(\xb)u-v$. By setting $u_k:=u$, $v_k:=(q(\xb+t_ku_k)-\gamma_k)/t_k$ we have $(\xb+t_ku_k,t_kv_k)\in\Gr M$ and $(\nabla q(\xb+t_ku_k)^T\lambda_k,-\lambda_k)\in\widehat N((\xb+t_ku_k,q(\xb+t_ku_k)-\gamma_k);\Gr M)=\widehat N((\xb+t_ku_k,t_kv_k);\Gr M)$ by \eqref{EqRegNormalConeGrM}. Passing to the limit and taking into account that
\[\lim_{k\to\infty}v_k=\lim_{k\to\infty}\frac{q(\xb+t_ku_k)-q(\xb)-(\gamma_k-q(\xb))}{t_k}=\nabla q(\xb)u-(\nabla q(\xb)u-v)=v, \]
we obtain $(\nabla q(\xb)^T\lambda,-\lambda)\in N((\xb,0);\Gr M;(u,v))$, and, by the definition of the directional limiting co\-derivative, we can conclude $\nabla q(\xb)^T\lambda\in D^\ast M((\xb;0);(u,v))(\lambda)$. This completes the proof.
\qed\end{proof}

Now we consider the notions of metric regularity and subregularity, respectively, and its characterization by coderivatives and limiting normal cones.

\begin{definition}Let $M:\R^n\rightrightarrows\R^m$ be a multifunction, let $(\xb,\yb)\in\Gr M$ and let $\kappa>0$.
\begin{enumerate}
\item $M$ is called {\em metrically regular with modulus $\kappa$} around $(\xb,\yb)$
if there are neighborhoods $U$ of $\xb$ and $V$ of $\yb$ such that
\begin{equation}
\label{EqMetrReg}
\dist{x,M^{-1}(y)}\leq \kappa\dist{y,M(x)}\ \forall (x,y)\in U\times V.
\end{equation}
\item $M$ is called {\em metrically subregular with modulus $\kappa$} at $(\xb,\yb)$ if there is a neighborhood $U$ of $\xb$ such that
\begin{equation}
\label{EqMetrSubReg}
\dist{x,M^{-1}(\yb)}\leq \kappa\dist{\yb,M(x)}\ \forall x\in U.
\end{equation}
\end{enumerate}
\end{definition}

It is well known that metric regularity of the multifunction $M$ around $(\xb,\yb)$ is equivalent to the Aubin property of the inverse multifunction $M^{-1}$. A multifunction $S:R^m\rightrightarrows R^n$ has the {\em Aubin property} with modulus $L\geq 0$ around some point $(\yb,\xb)\in\Gr S$ if there are neighborhoods $U$ of $\xb$ and $V$ of $\yb$ such that
\[S(y_1)\cap U\subset S(y_2)+L\norm{y_1-y_2}\B_{\R^n}\quad \forall y_1,y_2\in V,\]
where $\B_{\R^n}$ denotes the unit ball in $\R^n$ in the underlying norm.

Metric subregularity of $M$ at $(\xb,\yb)$ is equivalent to the property of {\em calmness} of the inverse multifunction $M^{-1}$. A multifunction $S:R^m\rightrightarrows R^n$ is called {\em calm} with modulus $L\geq 0$ at $(\yb,\xb)\in\Gr S$  if there 
are neighborhoods $U$ of $\xb$ and $V$ of $\yb$ such that
\[S(y)\cap U\subset S(\yb)+L\norm{y-\yb}\B_{\R^n}\quad \forall y\in V.\]

To introduce directional versions of metric (sub)regularity it is convenient to define  the following neighborhoods of a direction: Given   a direction $u\in \R^n$ and positive numbers $\rho,\delta>0$, the set $V_{\rho,\delta}(u)$
 is given by
\begin{equation}
\label{EqDefNbhd}V_{\rho,\delta}(u):=\{z\in\rho \B_{\R^n}\mv
\big\Vert \norm{u} z- \norm{z} u \big\Vert\leq\delta \norm{z}\
\norm{u}\}.
\end{equation}
This can also be written in the form
\[V_{\rho,\delta}(u)=\begin{cases}\{0\}\cup \big\{z\in\rho \B_{\R^n}\setminus \{0\}\mv
\left\Vert \frac z{\norm{z}} - \frac u{\norm{u}} \right\Vert\leq\delta\big\}&\mbox{if $u\not=0$,}\\
\rho \B_{\R^n} &\mbox{if $u=0$.}\end{cases}\]

\begin{definition}Let $M:\R^n\rightrightarrows\R^m$ be a multifunction, let $(\xb,\yb)\in\Gr M$ and let $\kappa>0$, $u\in\R^n$ and $v\in\R^m$.
\begin{enumerate}
\item $M$ is called {\em metrically regular with modulus $\kappa$ in direction $w:=(u,v)$} at $(\xb,\yb)$ if there are positive real numbers $\rho$ and $\delta$  such that
\begin{equation}
\label{EqDirMetrReg}
\dist{x,M^{-1}(y)}\leq \kappa\dist{y,M(x)}
\end{equation}
holds for all $(x,y)\in (\xb,\yb)+V_{\rho,\delta}(w)$ with $\norm{w}\dist{(x,y),\Gr M}\leq \delta\norm{w}\norm{(x,y)-(\xb,\yb)}$.
\item $M$ is called {\em metrically subregular with modulus $\kappa$ in direction $u$} at $(\xb,\yb)$ if there are positive real numbers $\rho$ and $\delta$  such that
\begin{equation}
\label{EqDirMetrSubReg}
\dist{x,M^{-1}(\yb)}\leq \kappa\dist{\yb,M(x)}\ \forall x\in \xb+V_{\rho,\delta}(u).
\end{equation}
\end{enumerate}
\end{definition}
Note that metric regularity in direction $(0,0)$
and metric subregularity in direction $0$ are equivalent to the properties of metric regularity
and metric subregularity, respectively. Further, metric regularity in direction $(u,0)$ implies metric subregularity in direction $u$, cf. \cite[Lemma 1]{Gfr13a}.

\begin{theorem}\label{ThCharMetrSubReg}
Let the multifunction $M:\R^n\rightrightarrows R^m$ be given by $M(x):=q(x)-\Gamma$, where $q:R^n\to\R^m$ is continuously differentiable and $\Gamma\subset\R^m$ is a closed  set. Further let $(\xb,0)\in\Gr M$, $u\in\R^n$ and $v\in\R^m $ be given.
\begin{enumerate}
\item (Mordukhovich criterion): $M$ is metrically regular around $(\xb,0)$ if and only if
\[\nabla q(\xb)^T\lambda=0, \lambda\in N(q(\xb);\Gamma)\ \Longrightarrow \lambda=0.\]
\item $M$ is metrically regular in direction $(u,v)$ at $(\xb,0)$ if and only if
\[\nabla q(\xb)^T\lambda=0, \lambda\in N(q(\xb);\Gamma;\nabla q(\xb)u-v)\ \Longrightarrow \lambda=0.\]
\item Assume that $q$ is twice Fr\'echet differentiable at $\xb$,
that $\Gamma$ is the union of finitely many convex polyhedra and
the condition
\[\nabla q(x)^T\lambda=0,\ \lambda\in N(q(\xb);\Gamma;\nabla q(\xb)u),\ u^T\nabla^2 (\lambda^Tq)(\xb)u\geq0\ \Longrightarrow \lambda=0\]
is fulfilled. Then $M$ is metrically subregular in direction $u$ at $(\xb,0)$.
\end{enumerate}
\end{theorem}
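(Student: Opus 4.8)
The plan is to argue by contradiction, exploiting the directional characterization of metric regularity (part 2) together with the fact that second-order information forces a certain limiting direction to sit in the relevant contingent set. Suppose $M$ is not metrically subregular in direction $u$ at $(\xb,0)$. Then for every choice of $\rho,\delta,\kappa>0$ the estimate \eqref{EqDirMetrSubReg} fails, and by a standard diagonal argument we can produce a sequence $x_k\to\xb$ with $x_k\in M^{-1}(0)$ never satisfied to within a growing constant, i.e. $\dist{x_k,M^{-1}(0)}>k\,\dist{0,M(x_k)}$, and with $(x_k-\xb)/\norm{x_k-\xb}\to u/\norm{u}$ (after passing to a subsequence; if $u=0$ no directional restriction is needed and the argument simplifies). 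Write $t_k:=\norm{x_k-\xb}\downarrow 0$ and $u_k:=(x_k-\xb)/t_k\to u$. The failure of subregularity at $x_k$ yields, via the exact-penalization / Ekeland-type construction used for metric subregularity of constraint systems $q(x)-\Gamma$, points and multipliers $\lambda_k$ with $\norm{\lambda_k}=1$, $\lambda_k\in\widehat N(\gamma_k;\Gamma)$ for suitable $\gamma_k\to q(\xb)$, and $\nabla q(\tilde x_k)^T\lambda_k\to 0$; the whole point of the first-order part (part 2) is precisely that this already gives a nonzero $\lambda$ with $\nabla q(\xb)^T\lambda=0$ and $\lambda\in N(q(\xb);\Gamma;\nabla q(\xb)u)$, contradicting the hypothesis — unless the first-order test is inconclusive, which is exactly the case the second-order term handles.

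Concretely, I would follow the scheme of \cite{Gfr14a,Gfr13b}: by compactness pass to a subsequence so that $\lambda_k\to\lambda$ with $\norm{\lambda}=1$; then $\nabla q(\xb)^T\lambda=0$, and using Lemma~\ref{LemDirCoderiv} (or directly the definition of the directional limiting normal cone) one obtains $\lambda\in N(q(\xb);\Gamma;\nabla q(\xb)u)$. It remains to show the extra second-order inequality $u^T\nabla^2(\lambda^Tq)(\xb)u\ge 0$. Here is where the polyhedral structure of $\Gamma$ and the twice-differentiability of $q$ enter: because $\Gamma$ is a finite union of convex polyhedra, locally around $\gamma_k$ the set $\Gamma$ looks like a fixed polyhedral cone shifted to $q(\xb)$, so $\lambda_k^T(\gamma_k-q(\xb))\le 0$ with equality up to terms controlled by the active faces; combining this with the Taylor expansion
\[
q(x_k)=q(\xb)+t_k\nabla q(\xb)u_k+\frac{t_k^2}{2}\nabla^2 q(\xb)(u_k,u_k)+o(t_k^2)
\]
and the fact that $\dist{0,M(x_k)}=\dist{q(x_k);\Gamma}$ is small of order $o(t_k)$ relative to $t_k=\dist{x_k,M^{-1}(0)}$ (this is the quantitative content of the negated subregularity), one pairs with $\lambda_k$, divides by $t_k^2$, and passes to the limit. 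The first-order term $t_k\lambda_k^T\nabla q(\xb)u_k$ is handled using $\nabla q(\xb)^T\lambda_k\to 0$ together with a sharper bound $\skalp{\lambda_k,\nabla q(\tilde x_k)u_k}$ that the construction provides, and what survives is $\tfrac12 u^T\nabla^2(\lambda^T q)(\xb)u\ge 0$.

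The main obstacle is the bookkeeping in this last limit: one must extract from the negation of directional metric subregularity not merely a sequence $x_k\to\xb$ along direction $u$ with slow decay of the residual, but a sequence of near-multipliers $\lambda_k$ satisfying simultaneously $\norm{\lambda_k}=1$, approximate stationarity $\nabla q(\tilde x_k)^T\lambda_k\to 0$ at nearby points $\tilde x_k$, and a one-sided estimate on $\lambda_k^T(q(x_k)-\gamma_k)$ strong enough that, after the Taylor expansion and division by $t_k^2$, the residual terms vanish and only the Hessian term remains — all while keeping $\gamma_k$ a projection of $q(x_k)$ onto one of the finitely many polyhedra $P_i$ so that the polyhedral reduction $\hat N(\gamma_k;\Gamma)=\hat N(\gamma_k;P_{i})$ holds for a fixed $i$ along the subsequence. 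I expect the cleanest route is to invoke the relevant sufficient-condition theorem for directional metric subregularity of $q(x)-\Gamma$ proved in \cite{Gfr14a} verbatim, verifying only that the hypotheses match, rather than redoing the perturbation analysis; the contribution here is the specialization of that abstract criterion to the disjunctive set $P$ via Lemma~\ref{LemTangDirNormalConeCartProduct} and the formulas for $T(y;P)$ and $\hat N(y;P)$ recorded above.
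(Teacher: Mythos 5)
Your fallback route for part 3 --- invoke \cite[Theorem 2.6]{Gfr14a} verbatim after checking the hypotheses --- is exactly what the paper does: its entire proof is a citation (part 1 from \cite[Theorem 4.18]{Mo06a} / \cite[Example 9.44]{RoWe98}, part 2 from \cite[Theorem 5]{Gfr13a} combined with the coderivative formula \eqref{EqCoDeriv} of Lemma \ref{LemDirCoderiv}, part 3 from \cite[Theorem 2.6]{Gfr14a}). So on that reading your proposal agrees with the paper for the third assertion.

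Two caveats. First, the statement you were asked to prove has three parts, and your proposal neither proves nor even cites parts 1 and 2; indeed you \emph{use} part 2 as an ingredient in your sketch. The paper does not prove them from scratch either, but it does say where they come from, and for part 2 the nontrivial point is precisely the translation, via Lemma \ref{LemDirCoderiv}, of the coderivative-based directional regularity criterion of \cite{Gfr13a} into the multiplier form stated here; your proposal is silent on this. Second, your self-contained sketch of part 3 stops exactly where the work is: extracting from the negation of directional subregularity, via Ekeland, approximate multipliers $\lambda_k\in\hat N(\gamma_k;\Gamma)$ with $\norm{\lambda_k}=1$ and $\nabla q(\tilde x_k)^T\lambda_k\to0$ is the easy half, but the second-order limit needs more than the one-sided estimate $\lambda_k^T(\gamma_k-q(\xb))\le 0$ you propose. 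In the polyhedral setting one fixes (along a subsequence) the index sets of polyhedra and active constraints containing $\gamma_k$, writes $\lambda$ as a nonnegative combination of the corresponding normals $a_{ij}$, and obtains the exact identity $\lambda^T(\gamma_k-q(\xb))=0$, which after Taylor expansion and division by $t_k^2$ yields $u^T\nabla^2(\lambda^Tq)(\xb)u= 0$ (hence $\ge 0$); this is the mechanism used in the closely related proof of Proposition \ref{PropPropertyR}(2) in the paper and in \cite{Gfr14a}. As written, your sketch acknowledges this bookkeeping as the ``main obstacle'' but does not resolve it, so the direct version has a genuine gap, while the citation version coincides with the paper.
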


\begin{proof}
The first statement is a specialization of  the more general
statement \cite[Theorem 4.18]{Mo06a}  and can be found e.g. in
\cite[Example 9.44]{RoWe98}. Similarly, the second statement
follows from \cite[Theorem 5]{Gfr13a} by taking into account that
the involved spaces are finite dimensional and \eqref{EqCoDeriv}.
Finally, the last statement follows from  \cite[Theorem
2.6]{Gfr14a}
\qed\end{proof}

Taking into account that in finite dimensions a multifunction is metrically subregular if and only if it is metrically subregular in every nonzero direction, see \cite[Lemma 2.7]{Gfr14a},  and $N(q(\xb);\Gamma;\nabla q(\xb)u)=\emptyset$ if $\nabla q(\xb)u\not\in T(q(\xb);\Gamma)$, we obtain the following consequence of Theorem \ref{ThCharMetrSubReg}:

\begin{corollary}
Let the multifunction $M:\R^n\rightrightarrows R^m$ be  given by
$M(x):=q(x)-\Gamma$, where $q:R^n\to\R^m$ is continuously
differentiable and $\Gamma\subset\R^m$ is a closed  set. Then $M$
is metrically subregular at $(\xb,0)$ if one of the following
conditions is fulfilled:
\begin{enumerate}
\item {\em First order sufficient condition  for metric
subregularity (FOSCMS)}: For every $0\not=u\in\R^n$ with $\nabla
q(\xb)u\in T(q(\xb);\Gamma)$ one has
\[\nabla q(\xb)^T\lambda=0, \lambda\in N(q(\xb);\Gamma;\nabla q(\xb)u)\ \Longrightarrow \lambda=0.\]
\item {\em Second order sufficient condition for metric
subregularity (SOSCMS)}: $q$ is twice Fr\'echet differentiable at
$\xb$, $\Gamma$ is the union of finitely many convex polyhedra, and
for every $0\not=u\in\R^n$ with $\nabla q(\xb)u\in
T(q(\xb);\Gamma)$ one has
\[\nabla q(x)^T\lambda=0,\ \lambda\in N(q(\xb);\Gamma;\nabla q(\xb)u),
\ u^T\nabla^2 (\lambda^Tq)(\xb)u\geq0\ \Longrightarrow \lambda=0.\]
\end{enumerate}
\end{corollary}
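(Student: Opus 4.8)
The plan is to deduce the corollary from Theorem~\ref{ThCharMetrSubReg} via the directional characterization of metric subregularity. First I would recall from \cite[Lemma 2.7]{Gfr14a} that, in finite dimensions, $M$ is metrically subregular at $(\xb,0)$ if and only if it is metrically subregular in direction $u$ at $(\xb,0)$ for every $0\neq u\in\R^n$. Thus it suffices to fix an arbitrary nonzero $u$ and establish directional metric subregularity of $M$ in direction $u$, separately under FOSCMS and under SOSCMS. I would split the argument according to whether $\nabla q(\xb)u$ lies in $T(q(\xb);\Gamma)$.

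If $\nabla q(\xb)u\notin T(q(\xb);\Gamma)$, then $N(q(\xb);\Gamma;\nabla q(\xb)u)=\emptyset$, so the implication in part~2 of Theorem~\ref{ThCharMetrSubReg} (taken with $v=0$) holds vacuously; hence $M$ is metrically regular in direction $(u,0)$ at $(\xb,0)$, and therefore metrically subregular in direction $u$ by \cite[Lemma 1]{Gfr13a}. This case needs no hypothesis at all, which is why neither FOSCMS nor SOSCMS imposes a condition on directions outside $T(q(\xb);\Gamma)$.

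If $\nabla q(\xb)u\in T(q(\xb);\Gamma)$, then under FOSCMS the assumed implication is exactly the condition in part~2 of Theorem~\ref{ThCharMetrSubReg} with $v=0$, giving metric regularity in direction $(u,0)$ and hence, again by \cite[Lemma 1]{Gfr13a}, metric subregularity in direction $u$; under SOSCMS (where $q$ is twice Fr\'echet differentiable at $\xb$ and $\Gamma$ is a finite union of convex polyhedra) the assumed implication is exactly the premise of part~3 of Theorem~\ref{ThCharMetrSubReg}, whose conclusion is metric subregularity in direction $u$. In either case, since $0\neq u$ was arbitrary, \cite[Lemma 2.7]{Gfr14a} yields metric subregularity of $M$ at $(\xb,0)$, completing the proof.

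I expect the argument to be essentially a bookkeeping exercise once Theorem~\ref{ThCharMetrSubReg} and \cite[Lemma 2.7]{Gfr14a} are in hand; the only genuinely non-routine point is the treatment of the degenerate directions $\nabla q(\xb)u\notin T(q(\xb);\Gamma)$, where one must notice that the directional limiting normal cone is empty so that the directional Mordukhovich criterion is satisfied trivially and even directional metric \emph{regularity} holds. The passage from ``directionally metrically subregular in every nonzero direction'' to ``metrically subregular'' --- which hides a compactness argument over the unit sphere --- is absorbed into the cited \cite[Lemma 2.7]{Gfr14a} and need not be reproduced here.
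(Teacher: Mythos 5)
Your proposal is correct and follows essentially the same route as the paper: the corollary is stated there precisely as a consequence of Theorem \ref{ThCharMetrSubReg} combined with the equivalence of metric subregularity with directional metric subregularity in every nonzero direction (\cite[Lemma 2.7]{Gfr14a}), the emptiness of $N(q(\xb);\Gamma;\nabla q(\xb)u)$ when $\nabla q(\xb)u\notin T(q(\xb);\Gamma)$, and the fact that directional metric regularity in direction $(u,0)$ implies directional metric subregularity in direction $u$ (\cite[Lemma 1]{Gfr13a}). Your explicit case split and the use of part 2 (with $v=0$) for FOSCMS and part 3 for SOSCMS is exactly the intended bookkeeping.
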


Next we consider optimality conditions for the problem
\begin{equation}\label{EqOptProb}\min_x f(x)\quad\mbox{subject to}\quad q(x)\in\Gamma\end{equation}
where $f:\R^n\to\R$, $q:\R^n\to\R^m$ are continuously differentiable and $\Gamma\subset\R^m$ is closed. We denote the feasible region of \eqref{EqOptProb} by ${\cal F}$. Given a feasible point $\xb\in{\cal F}$, we define the {\em linearized cone} by
\[T^{\rm lin}(\xb):=\{u\in\R^n\mv \nabla q(\xb)u\in T(q(\xb);\Gamma)\}\]
and the {\em critical cone} by
\[{\cal C}(\xb)=\{u\in T^{\rm lin}(\xb)\mv \nabla f(\xb)u\leq 0\}.\]

\begin{definition}
Let $\xb\in{\cal F}$ be feasible for the problem \eqref{EqOptProb}.
\begin{enumerate}
\item We say that $\xb$ is {\em B-stationary} if
\[0\in\nabla f(\xb)+\hat N(\xb;{\cal F}).\]
\item We say that $\xb$ is {\em M-stationary} if
\[0\in\nabla f(\xb)+ \nabla q(\xb)^T N(q(\xb);\Gamma).\]
\end{enumerate}
\end{definition}

Since $\hat N(\xb;{\cal F})$ is the polar cone of $T(\xb;{\cal F})$, B-stationarity can be equivalently written as $\nabla f(\xb)u\geq 0$ $\forall u\in T(\xb;{\cal F})$. Hence, B-stationarity means that there does not exist feasible descent directions at $\xb$, which is a first-order necessary condition for $\xb$ being a local minimizer.

Usually B-stationarity is not very useful in practice, since the regular normal cone of ${\cal F}$ at $\xb$ is difficult to compute, in general. Hence, M-stationarity conditions are used as first-order necessary condition, which, however, are only valid under some constraint qualification condition. Indeed,  the following lemma even shows: Under some weak constraint qualification, M-stationarity is not only necessary for local minimizers, but also for B-stationarity.

\begin{lemma}Let $\xb\in{\cal F}$ be B-stationary for the problem \eqref{EqOptProb}, and assume that either ${\cal C}(\xb)=\{0\}$ and the multifunction $\tilde M(u):= \nabla q(\xb)u-T(q(\xb);\Gamma)$ is metrically subregular at $(0,0)$ or there exists $\bar u\in{\cal C}(\xb)$ such that the mapping $M(x):= q(x)-\Gamma$ is metrically subregular in direction $\bar u$ at $(\xb,0)$. Then $\xb$ is M-stationary.
\end{lemma}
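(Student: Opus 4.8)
The aim is to produce a multiplier $\lambda\in N(q(\xb);\Gamma)$ with $\nabla f(\xb)+\nabla q(\xb)^T\lambda=0$, starting from the B-stationarity relation $-\nabla f(\xb)\in\hat N(\xb;{\cal F})$, i.e.\ $\nabla f(\xb)u\ge 0$ for all $u\in T(\xb;{\cal F})$. I would treat the two alternatives separately, reducing each to the {\em calmness calculus rule}: if $M(x)=q(x)-\Gamma$ is metrically subregular at $(\xb,0)$, then $\hat N(\xb;q^{-1}(\Gamma))\subseteq\nabla q(\xb)^T N(q(\xb);\Gamma)$, together with its directional refinement (cf.\ \cite{Gfr13a,Gfr14a}).

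In the first alternative (${\cal C}(\xb)=\{0\}$ and $\tilde M$ metrically subregular at $(0,0)$) the B-stationarity hypothesis is in fact automatic: since $T(\xb;{\cal F})\subseteq T^{\rm lin}(\xb)$ and ${\cal C}(\xb)=\{0\}$ force $\nabla f(\xb)u>0$ on $T^{\rm lin}(\xb)\setminus\{0\}$, we obtain $-\nabla f(\xb)\in (T^{\rm lin}(\xb))^\circ=\hat N(0;T^{\rm lin}(\xb))$. Now $T^{\rm lin}(\xb)=\tilde q^{-1}(\tilde\Gamma)$ for the linear map $\tilde q(u)=\nabla q(\xb)u$ and $\tilde\Gamma=T(q(\xb);\Gamma)$, and $\tilde M(u)=\tilde q(u)-\tilde\Gamma$ is its constraint mapping; applying the calculus rule to $\tilde M$ at $(0,0)$ gives $-\nabla f(\xb)\in\nabla q(\xb)^T N(0;T(q(\xb);\Gamma))$. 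Invoking the standard fact $N(0;T(q(\xb);\Gamma))\subseteq N(q(\xb);\Gamma)$ (an equality when $\Gamma$ is a union of convex polyhedra, since then $\Gamma$ coincides locally with $q(\xb)+T(q(\xb);\Gamma)$) then yields M-stationarity.

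In the second alternative, if $\bar u=0$ the hypothesis is ordinary metric subregularity of $M$ at $(\xb,0)$ and the calculus rule applied directly to $-\nabla f(\xb)\in\hat N(\xb;{\cal F})$ finishes. For $\bar u\ne 0$ I would first show $\bar u\in T(\xb;{\cal F})$: since $\nabla q(\xb)\bar u\in T(q(\xb);\Gamma)$, the definition of the contingent cone provides $t_k\downarrow 0$ with $\dist{q(\xb)+t_k\nabla q(\xb)\bar u,\Gamma}=o(t_k)$, hence $\dist{q(\xb+t_k\bar u),\Gamma}=o(t_k)$ by differentiability of $q$, and directional metric subregularity applied at the points $\xb+t_k\bar u\in\xb+V_{\rho,\delta}(\bar u)$ gives $\dist{\xb+t_k\bar u,{\cal F}}=o(t_k)$, i.e.\ $\bar u\in T(\xb;{\cal F})$. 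Then $\nabla f(\xb)\bar u\ge 0$ (B-stationarity) and $\nabla f(\xb)\bar u\le 0$ ($\bar u\in{\cal C}(\xb)$) force $\nabla f(\xb)\bar u=0$, so $-\nabla f(\xb)\in\hat N(\xb;{\cal F})\cap\bar u^\perp$. I would next argue that this part of the regular normal cone orthogonal to the critical direction already lies in the directional limiting normal cone $N(\xb;{\cal F};\bar u)$ (by projecting $-\nabla f(\xb)$ onto $\hat N(\xb+t_ku_k;{\cal F})$ along the sequence above), and finally apply the directional calmness rule — metric subregularity of $M$ in direction $\bar u$ combined with Lemma \ref{LemDirCoderiv} and \cite[Theorem 5]{Gfr13a} — to obtain $N(\xb;{\cal F};\bar u)\subseteq\nabla q(\xb)^T N(q(\xb);\Gamma;\nabla q(\xb)\bar u)\subseteq\nabla q(\xb)^T N(q(\xb);\Gamma)$, i.e.\ M-stationarity.

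The main obstacle is the directional calculus rule $N(\xb;{\cal F};\bar u)\subseteq\nabla q(\xb)^T N(q(\xb);\Gamma;\nabla q(\xb)\bar u)$ under directional metric subregularity, together with the intermediate claim $\hat N(\xb;{\cal F})\cap\bar u^\perp\subseteq N(\xb;{\cal F};\bar u)$; this is where the directional variational-analysis machinery of \cite{Gfr13a,Gfr14a} does the real work. By contrast, the reduction to the linearized problem in the first alternative, the tangency $\bar u\in T(\xb;{\cal F})$, and the orthogonality $\nabla f(\xb)\bar u=0$ are elementary.
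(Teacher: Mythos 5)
Your treatment of the first alternative is essentially the paper's own argument (pass to the linearized problem $\min \nabla f(\xb)u$ over $T^{\rm lin}(\xb)=\tilde q^{-1}(T(q(\xb);\Gamma))$, use the subregularity-based normal cone inclusion for $\tilde M$, and then $N(0;T(q(\xb);\Gamma))\subset N(q(\xb);\Gamma)$), and the elementary steps of the second alternative ($\bar u\in T(\xb;{\cal F})$ via directional subregularity, $\nabla f(\xb)\bar u=0$, the case $\bar u=0$) are correct.

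The genuine gap is in the remainder of the second alternative, which is exactly where you admit "the real work" happens, but the two claims you lean on are not established by what you cite. First, the inclusion $\hat N(\xb;{\cal F})\cap\bar u^\perp\subset N(\xb;{\cal F};\bar u)$ is a nontrivial statement about general closed sets; "projecting $-\nabla f(\xb)$ onto $\hat N(\xb+t_k u_k;{\cal F})$" is not an argument, since nothing forces those projections to converge to $-\nabla f(\xb)$. (The inclusion can in fact be proved, e.g.\ by maximizing $\langle -\nabla f(\xb),x\rangle-\epsilon_k^{-1}\|x-(\xb+t_k\bar u)\|^2$ over ${\cal F}$ near $\xb+t_k\bar u$ and using $-\nabla f(\xb)\in\hat N(\xb;{\cal F})$ together with the orthogonality to control the maximizer, but some such argument has to be supplied.) Second, and more seriously, the "directional calmness rule" $N(\xb;{\cal F};\bar u)\subset\nabla q(\xb)^T N(q(\xb);\Gamma;\nabla q(\xb)\bar u)$ does not follow from Lemma \ref{LemDirCoderiv} together with \cite[Theorem 5]{Gfr13a}: those results compute the directional coderivative of $M$ and characterize directional metric \emph{regularity}; they are not a preimage rule for directional \emph{limiting} normals under mere directional \emph{sub}regularity. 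If you try to prove such a rule, directional subregularity at $(\xb,0)$ does give (plain) subregularity at the nearby points $x_k=\xb+t_ku_k$, so each regular normal $\xi_k\in\hat N(x_k;{\cal F})$ can be written as $\nabla q(x_k)^T\lambda_k$ with $\lambda_k\in N(q(x_k);\Gamma)$; but passing to the limit requires boundedness of $(\lambda_k)$, which subregularity alone does not provide — an unbounded sequence produces precisely a nonzero $\lambda^0\in N(q(\xb);\Gamma;\nabla q(\xb)\bar u)$ with $\nabla q(\xb)^T\lambda^0=0$, a situation that directional metric subregularity does not exclude. So as written, the key step of your second case is unsupported.

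For comparison, the paper avoids all directional normal-cone calculus for ${\cal F}$: from $-\nabla f(\xb)\in\hat N(\xb;{\cal F})$ it invokes the smooth variational description of regular normals \cite[Theorem 6.11]{RoWe98} to produce a smooth $\tilde f$ with $\nabla\tilde f(\xb)=\nabla f(\xb)$ for which $\xb$ is a genuine (global) minimizer over ${\cal F}$, and then applies \cite[Corollary 2]{Gfr13a}, a first-order M-stationarity result whose hypotheses are exactly metric subregularity of $M$ in a critical direction; the multiplier and the membership $\lambda\in N(q(\xb);\Gamma)$ come out directly via $D^\ast M(\xb;0)(\lambda)=\{\nabla q(\xb)^T\lambda\}$. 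If you want a self-contained route along your lines, you would have to prove both directional claims above rather than cite them.
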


\begin{proof}
If ${\cal C}(\xb)=\{0\}$, then $0$ is solution of the problem
\begin{equation}\label{EqLinProbl}\min_{u\in\R^n}\nabla f(\xb) u\quad\mbox{subject to}\quad 0\in\nabla q(\xb)u -T(q(\xb);\Gamma).\end{equation}
Since the mapping $\tilde M(u)$ is assumed to be metrically
subregular at $(0,0)$, by \cite[Corollary 2]{Gfr13a} there is some
$\lambda$ such that $0\in\nabla f(\xb)+D^\ast\tilde
M(0,0)(\lambda)$. By \cite[Example 9.44, Proposition 6.27]{RoWe98} we conclude $-\nabla f(\xb)\in D^\ast\tilde M(0,0)(\lambda)=\{\nabla q(\xb)^T\lambda\}$ and $\lambda\in N(0;T(q(\xb);\Gamma))\subset N(q(\xb);\Gamma)$,  showing M-stationarity of $\xb$.
In the second case, since  $-\nabla f(\xb)\in\hat N(\xb;{\cal F})$ , by \cite[Theorem 6.11]{RoWe98} there is some smooth function $\tilde f:\R^n\to\R$ such that $\nabla \tilde f(\xb)=\nabla f(\xb)$ and $\xb$ is a global minimizer of the problem
\[\min \tilde f(x)\quad\mbox{subject to}\quad x\in{\cal F}=\{x\mv 0\in M(x)\}.\] Now, again by \cite[Corollary 2]{Gfr13a},
we obtain that there is some multiplier $\lambda$ such that $-\nabla f(\xb)=-\nabla \tilde f(\xb)\in D^\ast M(\xb;0)(\lambda)$, and from  \cite[Example 9.44]{RoWe98} we obtain $D^\ast M(\xb;0)(\lambda)=\{\nabla q(\xb)^T\lambda\}$ and $\lambda\in N(q(\xb);\Gamma)$.
\qed\end{proof}

\begin{remark}
If $\Gamma$ is the union of finitely many convex polyhedra, then
so is $T(q(\xb);\Gamma)$, and hence the multifunction $\tilde M$ is
a polyhedral multifunction and consequently metrically subregular
by Robinson's result \cite{Rob81}.
\end{remark}
Given any element  $g\in \hat N(\xb;{\cal F})$, then, by the definition, $\xb$ is a B-stationary solution of the problem
\[\min-g^Tx\quad \mbox{subject to}\quad q(x)\in\Gamma.\]
If $M$ is metrically subregular in $(\xb,0)$, then it is also metrically subregular in every direction, and further, $\tilde M$ is metrically subregular by
\cite[Proposition 2.1]{Gfr11}. Hence $\xb$ is also M-stationary, and we obtain $g\in\nabla q(\xb)^TN(q(\xb);\Gamma)$.
Thus we rediscover the following formula, which would also follow from \cite[Theorem 4.1]{HenJouOut02}:
\begin{corollary}\label{CorInclNormalCone}Let $q:\R^n\to\R^m$ be continuously differentiable, $\Gamma\subset\R^m$ be closed and let $\xb\in q^{-1}(\Gamma)$. If the multifunction $x\rightrightarrows q(x)-\Gamma$ is metrically subregular at $(\xb,0)$, then
\[\hat N(\xb;q^{-1}(\Gamma))\subset\nabla q(\xb)^T N(q(\xb);\Gamma).\]
\end{corollary}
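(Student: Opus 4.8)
The plan is to deduce Corollary~\ref{CorInclNormalCone} directly from the paragraph immediately preceding it, which already contains essentially the whole argument. First I would fix an arbitrary $g\in\hat N(\xb;q^{-1}(\Gamma))$ and set $\Gamma$, $q$, $\xb$ as in the hypothesis, so that ${\cal F}=q^{-1}(\Gamma)=\{x\mv 0\in M(x)\}$ with $M(x):=q(x)-\Gamma$. By definition of the regular normal cone, $g^T(x-\xb)\le o(\norm{x-\xb})$ for $x\to_{{\cal F}}\xb$, which says precisely that $\xb$ is a B-stationary solution of $\min\,-g^Tx$ subject to $q(x)\in\Gamma$ (the gradient of the objective is $-g$, and $-(-g)=g\in\hat N(\xb;{\cal F})$).

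Next I would invoke the Lemma preceding the Remark to pass from B-stationarity to M-stationarity, checking its hypotheses. The assumption is that $M$ is metrically subregular at $(\xb,0)$; I would use the fact, cited in the excerpt after Theorem~\ref{ThCharMetrSubReg} (namely \cite[Lemma 2.7]{Gfr14a}), that in finite dimensions metric subregularity is equivalent to metric subregularity in every nonzero direction, so in particular $M$ is metrically subregular in direction $\bar u$ for any $\bar u\in{\cal C}(\xb)$. If ${\cal C}(\xb)\neq\{0\}$, pick such a nonzero $\bar u$ and the second alternative of the Lemma applies. If ${\cal C}(\xb)=\{0\}$, then I need the first alternative, i.e. metric subregularity of $\tilde M(u):=\nabla q(\xb)u-T(q(\xb);\Gamma)$ at $(0,0)$; this follows from \cite[Proposition 2.1]{Gfr11} (as already noted in the paragraph before the corollary), since metric subregularity of $M$ at $(\xb,0)$ implies metric subregularity of its ``linearization'' $\tilde M$ at $(0,0)$. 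Either way the Lemma yields that $\xb$ is M-stationary for $\min\,-g^Tx$ subject to $q(x)\in\Gamma$, that is, $0\in-g+\nabla q(\xb)^T N(q(\xb);\Gamma)$, hence $g\in\nabla q(\xb)^T N(q(\xb);\Gamma)$. Since $g$ was arbitrary, the claimed inclusion $\hat N(\xb;q^{-1}(\Gamma))\subset\nabla q(\xb)^T N(q(\xb);\Gamma)$ follows.

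I do not expect a genuine obstacle here: the only delicate point is the bookkeeping in the case split on whether ${\cal C}(\xb)=\{0\}$, making sure the correct alternative hypothesis of the Lemma is supplied in each case, together with the (standard, already-cited) facts that full metric subregularity of $M$ implies both directional metric subregularity of $M$ and metric subregularity of the linearized map $\tilde M$. One could also shortcut the whole case analysis by observing that if $T(q(\xb);\Gamma)$ is not needed — but since $\Gamma$ here is an arbitrary closed set, not a union of polyhedra, the Robinson polyhedrality argument from the Remark is unavailable, so the reduction through \cite[Proposition 2.1]{Gfr11} for $\tilde M$ is the right tool. No new computation is required beyond quoting the Lemma and the two cited propositions.
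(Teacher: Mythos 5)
Your proposal is correct and takes essentially the same route as the paper: fix $g\in\hat N(\xb;q^{-1}(\Gamma))$, observe that $\xb$ is B-stationary for $\min\,-g^Tx$ subject to $q(x)\in\Gamma$, and pass to M-stationarity via the preceding Lemma, using that metric subregularity of $M$ at $(\xb,0)$ yields metric subregularity in every direction and metric subregularity of $\tilde M$ by \cite[Proposition 2.1]{Gfr11} --- exactly the argument sketched in the paragraph before the corollary. Your case split on ${\cal C}(\xb)$ is harmless but not needed: since $0\in{\cal C}(\xb)$ always and metric subregularity in direction $0$ is just metric subregularity, the second alternative of the Lemma already applies with $\bar u=0$.
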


\section{Stability properties of the feasible set mapping ${\cal F}$}
In this section, we study conditions for metric regularity as well as H\"older and Lipschitz stability of the feasible set mapping
${\cal F}$. In particular, we introduce and discuss two regularity properties which imply metric regularity around points \textit{near}
some reference point.

We start with the following technical lemma, where the functions $q$ and $F$ are supposed to satisfy the basic Assumption \ref{AssBasic}:

\begin{lemma}\label{LemTechn}
For every $\epsilon>0$ there are neighborhoods $\hat U_\epsilon$ of $\xb$ and $\hat W_\epsilon$ of $\ob$ such that
\begin{equation}
\label{EqError1}\norm{q(x,\omega)-q(x,\ob)}+\norm{\nabla_x q(x,\omega)-\nabla_x q(x,\ob)}\leq e_1(\omega)+\epsilon\norm{x-\xb},
\end{equation}
\begin{equation}
\label{EqErrorF1}\norm{F(x,\omega)-F(x,\ob)}+\norm{q(x,\omega)-q(x,\ob)}+\norm{\nabla_x q(x,\omega)-\nabla_x q(x,\ob)}\leq \hat \tau_1(\omega)+\epsilon\norm{x-\xb},
\end{equation}
\begin{eqnarray}
\norm{q(x,\omega)-q(x,\ob)}&\leq& \norm{q(\xb,\omega)-q(\xb,\ob)}+e_2(\omega)\norm{x-\xb}+\frac\epsilon 2\norm{x-\xb}^2\nonumber\\
&\leq& e_2(\omega)^2+e_2(\omega)\norm{x-\xb}+\frac\epsilon 2\norm{x-\xb}^2\label{EqError2}
\end{eqnarray}

hold for all $(x,\omega)\in \hat U_{\epsilon}\times \hat W_{\epsilon}$.
\end{lemma}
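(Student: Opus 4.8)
The plan is to prove the three estimates by applying elementary Taylor expansion arguments to the smooth data $q$, $\nabla_x q$ and $F$, exploiting the continuity hypotheses in Assumption \ref{AssBasic} together with the definitions of $e_1$, $e_2$ and $\hat\tau_1$. The central observation is that each left-hand side should be split into a term measuring the perturbation \emph{at the reference point} $\xb$ — which is exactly what $e_l(\omega)$ and $\hat\tau_l(\omega)$ are built to control — and a remainder term measuring how the perturbation \emph{varies} as $x$ moves away from $\xb$; the latter is bounded by $\epsilon\norm{x-\xb}$ (or $\tfrac\epsilon2\norm{x-\xb}^2$) once $x$ is close enough to $\xb$ and $\omega$ close enough to $\ob$.

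First I would treat \eqref{EqError1}. Write
\[
q(x,\omega)-q(x,\ob) = \big(q(\xb,\omega)-q(\xb,\ob)\big) + \int_0^1 \big(\nabla_x q(\xb+t(x-\xb),\omega)-\nabla_x q(\xb+t(x-\xb),\ob)\big)(x-\xb)\,dt,
\]
so that $\norm{q(x,\omega)-q(x,\ob)}\le \norm{q(\xb,\omega)-q(\xb,\ob)} + \sup_{t\in[0,1]}\norm{\nabla_x q(\xb+t(x-\xb),\omega)-\nabla_x q(\xb+t(x-\xb),\ob)}\,\norm{x-\xb}$. For the gradient term, add and subtract $\nabla_x q(\xb,\omega)$ and $\nabla_x q(\xb,\ob)$, using joint continuity of $\nabla_x q$ at $(\xb,\ob)$ to absorb $\norm{\nabla_x q(x,\omega)-\nabla_x q(\xb,\omega)}$ and $\norm{\nabla_x q(x,\ob)-\nabla_x q(\xb,\ob)}$ into a factor $\tfrac\epsilon2\norm{x-\xb}$ each (after shrinking the neighborhoods), leaving the reference-point quantity $\norm{\nabla_x q(\xb,\omega)-\nabla_x q(\xb,\ob)}$. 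Summing the two contributions and recalling $e_1(\omega)=\norm{\nabla_x q(\xb,\omega)-\nabla_x q(\xb,\ob)}+\norm{q(\xb,\omega)-q(\xb,\ob)}$ (note $\tfrac1l=1$ for $l=1$) yields \eqref{EqError1}, with the $\epsilon$ redistributed across the finitely many terms. Inequality \eqref{EqErrorF1} is then immediate: bound $\norm{F(x,\omega)-F(x,\ob)}$ by $\norm{F(\xb,\omega)-F(\xb,\ob)}+\epsilon\norm{x-\xb}$ via continuity of $F$ at $(\xb,\ob)$ (here Assumption \ref{AssBasic} provides partial differentiability of $F$ in $x$ and continuity of $\nabla_x F$ at $(\xb,\ob)$, which gives the needed local equicontinuity in $x$), add it to \eqref{EqError1}, and recognize $\hat\tau_1(\omega)=\norm{F(\xb,\omega)-F(\xb,\ob)}+e_1(\omega)$.

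For \eqref{EqError2} I would use a second-order Taylor expansion of $x\mapsto q(x,\omega)-q(x,\ob)$ about $\xb$:
\[
q(x,\omega)-q(x,\ob) = \big(q(\xb,\omega)-q(\xb,\ob)\big) + \big(\nabla_x q(\xb,\omega)-\nabla_x q(\xb,\ob)\big)(x-\xb) + R(x,\omega),
\]
where, by the integral form of the remainder, $\norm{R(x,\omega)}\le \tfrac12\sup_{t}\norm{\nabla_x^2 q(\xb+t(x-\xb),\omega)-\nabla_x^2 q(\xb+t(x-\xb),\ob)}\,\norm{x-\xb}^2$; the continuity of $\nabla_x^2 q$ at $(\xb,\ob)$ together with its continuity on $U$ for each fixed $\omega$ makes this supremum at most $\epsilon$ after shrinking $\hat U_\epsilon,\hat W_\epsilon$. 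The linear term is bounded by $\norm{\nabla_x q(\xb,\omega)-\nabla_x q(\xb,\ob)}\norm{x-\xb}\le e_2(\omega)\norm{x-\xb}$, and the constant term equals $\norm{q(\xb,\omega)-q(\xb,\ob)}$, which is $\le e_2(\omega)^2$ by the very definition $e_2(\omega)\ge\norm{q(\xb,\omega)-q(\xb,\ob)}^{1/2}$. This gives both lines of \eqref{EqError2}. Finally I would note that all three estimates involve only finitely many terms, so choosing each neighborhood small enough and replacing $\epsilon$ by $\epsilon/(\text{number of terms})$ in the intermediate steps produces the stated form; intersecting the finitely many neighborhoods gives the common $\hat U_\epsilon$, $\hat W_\epsilon$.

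The only mildly delicate point — hardly an obstacle — is bookkeeping: one must be careful that the constant term appearing after the Taylor expansion in $x$ is evaluated at $\xb$ and hence controlled by $e_l(\omega)$ rather than by something depending on $x$, and that the continuity statements invoked (joint continuity at $(\xb,\ob)$ versus continuity in $x$ for fixed $\omega$) are exactly those granted by Assumption \ref{AssBasic}. No deeper variational-analytic input is needed; this is a purely calculus lemma whose role is to feed the genuinely substantive stability arguments of Sections 3 and 4.
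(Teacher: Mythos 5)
Your handling of the second term on the left of \eqref{EqError1} contains a genuine gap. You decompose $\norm{\nabla_x q(x,\omega)-\nabla_x q(x,\ob)}$ by adding and subtracting $\nabla_x q(\xb,\omega)$ and $\nabla_x q(\xb,\ob)$, and then claim that joint continuity of $\nabla_x q$ at $(\xb,\ob)$ lets you bound $\norm{\nabla_x q(x,\omega)-\nabla_x q(\xb,\omega)}$ and $\norm{\nabla_x q(x,\ob)-\nabla_x q(\xb,\ob)}$ by $\tfrac\epsilon2\norm{x-\xb}$ each. Continuity only gives that these quantities are small as \emph{constants} (say $\le\delta$ on small neighborhoods); it does not produce a bound proportional to $\norm{x-\xb}$, and such a bound with an arbitrarily small factor is in fact false: each of these increments is of size roughly $\norm{\nabla_x^2 q(\xb,\ob)}\,\norm{x-\xb}$, which is not $\le\tfrac\epsilon2\norm{x-\xb}$ unless the Hessian vanishes. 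Your triangle-inequality split also destroys the cancellation that makes the lemma true: the two one-variable increments nearly coincide and should be subtracted, not added. The correct argument (and the one the paper uses, in scalarized form via dual vectors and the mean value theorem) is to estimate the paired difference $\bigl[\nabla_x q(x,\omega)-\nabla_x q(\xb,\omega)\bigr]-\bigl[\nabla_x q(x,\ob)-\nabla_x q(\xb,\ob)\bigr]$ by $\sup_z\norm{\nabla_x^2 q(z,\omega)-\nabla_x^2 q(z,\ob)}\,\norm{x-\xb}$, where the supremum is over the segment $[\xb,x]$, and then to observe that this Hessian difference is $\le\epsilon$ near $(\xb,\ob)$ because both Hessians are close to $\nabla_x^2 q(\xb,\ob)$ by the continuity granted in Assumption \ref{AssBasic}. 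The same issue affects the $F$-term in \eqref{EqErrorF1}: "continuity of $F$ at $(\xb,\ob)$" is not the mechanism; you need the mean value argument applied to $x\mapsto F(x,\omega)-F(x,\ob)$, whose $x$-derivative $\nabla_x F(\cdot,\omega)-\nabla_x F(\cdot,\ob)$ is small by continuity of $\nabla_x F$ at $(\xb,\ob)$ — your parenthetical gestures at this, but the argument as written relies on the same incorrect "continuity gives $\epsilon\norm{x-\xb}$" step.

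The rest of the proposal is sound and close to the paper: the zeroth-order estimate for $\norm{q(x,\omega)-q(x,\ob)}$ via the integral (or mean value) form with the integrand $\norm{\nabla_x q(z,\omega)-\nabla_x q(z,\ob)}$ bounded by a small constant is fine, and your treatment of \eqref{EqError2} — second-order expansion in $x$ of the difference, remainder controlled by $\norm{\nabla_x^2 q(\cdot,\omega)-\nabla_x^2 q(\cdot,\ob)}\le\epsilon$, linear term by $e_2(\omega)\norm{x-\xb}$, and $\norm{q(\xb,\omega)-q(\xb,\ob)}\le e_2(\omega)^2$ — is essentially the paper's argument (the paper scalarizes with a dual vector so the classical mean value theorem applies; your integral-remainder form achieves the same legitimately). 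Once you replace the flawed step by the paired mean value estimate using the continuity of $\nabla_x^2 q$ (and $\nabla_x F$) at $(\xb,\ob)$, the proof is complete.
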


\begin{proof}Let $\epsilon>0$ be fixed, and choose a ball $\hat U_{\epsilon}$  around $\xb$ and a neighborhood $\hat W_{\epsilon}$ of $\ob$ such that
\[\norm{\nabla_x F(x,\omega)-\nabla_x F(\xb,\ob)}+\norm{\nabla_x q(x,\omega)-\nabla_x q(\xb,\ob)}+\norm{\nabla_x^2 q(x,\omega)-\nabla_x^2 q(\xb,\ob)}\leq\frac\epsilon2,\]
and consequently
\[\norm{\nabla_x F(x,\omega)-\nabla_x F(x,\ob)}+\norm{\nabla_x q(x,\omega)-\nabla_x q(x,\ob)}+\norm{\nabla_x^2 q(x,\omega)-\nabla_x^2 q(x,\ob)}\leq\epsilon\]
hold for all $(x,\omega)\in  \hat U_\epsilon\times \hat W_\epsilon$.

For arbitrarily fixed $(x,\omega)\in \hat U_\epsilon\times \hat W_\epsilon$ we can find $u,\mu\in\B_{\R^n}$, $\xi, \lambda\in\B_{\R^m}$  such that
\begin{eqnarray*}
&&\norm{F(x,\omega)-F(x,\bar\omega)}={\mu}^T(F(x,\omega)-F(x,\bar\omega)),\\
&&\norm{q(x,\omega)-q(x,\bar\omega)}={\xi}^T(q(x,\omega)-q(x,\bar\omega)),\\
&&\norm{\nabla_x q(x,\omega)-\nabla_x q(x,\bar\omega)}={\lambda}^T(\nabla_x q(x,\omega)-\nabla_x q(x,\bar\omega))u.
\end{eqnarray*}
Then there is a point $z$ belonging to the line segment $[\xb,x]\subset U_{\epsilon}$ such that
\begin{eqnarray*}
\lefteqn{{\xi}^T(q(x,\omega)-q(x,\bar\omega))+{\lambda}^T(\nabla_x q(x,\omega)-\nabla_x q(x,\bar\omega))u}\\
&=&{\xi}^T(q(\xb,\omega)-q(\xb,\bar\omega))+{\lambda}^T(\nabla_x q(\xb,\omega)-\nabla_x q(\xb,\bar\omega))u
+\left({\xi}^T(\nabla_x q(z,\omega)-\nabla_x q(z,\bar\omega))\right.\\
&&\quad\left.+{u}^T(\nabla_x^2 ({\lambda}^Tq)(z,\omega)-\nabla_x^2 ({\lambda}^Tq)(z,\bar\omega))\right)(x-\xb)\\
&\leq& e_1(\omega)+\left(\norm{\nabla_x q(z,\omega)-\nabla_x q(z,\bar\omega)}+\norm{\nabla_x^2 q(z,\omega)-\nabla_x^2 q(z,\bar\omega)}\right)\norm{x-\xb},
\end{eqnarray*}
and \eqref{EqError1} follows. The estimate \eqref{EqErrorF1} follows analogously.

To show \eqref{EqError2} note that there is some point $\tilde z$ belonging to the line segment $[\xb,x]$ such that
\begin{eqnarray*}{\xi}^T(q(x,\omega)-q(x,\bar\omega))&=&{\xi}^T(q(\xb,\omega)-q(\xb,\bar\omega))+{\xi}^T(\nabla_x q(\xb,\omega)-\nabla_x q(\xb,\bar\omega))(x-\xb)\\
&&\quad+\frac 12(x-\xb)^T(\nabla_x^2( {\xi}^Tq)(\tilde z,\omega)-\nabla_x^2({\xi}^T q)(\tilde z,\bar\omega))(x-\xb)\\
&\leq&  \norm{q(\xb,\omega)-q(\xb,\bar\omega)}+ e_2(\omega)\norm{x-\xb}\\
&&\qquad+\frac 12\norm{\nabla_x^2 q(\tilde z,\omega)-\nabla_x^2 q(\tilde z,\bar\omega)}\norm{x-\xb}^2,
\end{eqnarray*}
and from this inequality \eqref{EqError2} follows.
\qed\end{proof}

For every $\omega\in\Omega$ we define the linearized cone
\[T^{\rm lin}_\omega(x):=\{u\in\R^n\mv \nabla_x q(x;\omega)u\in T(q(x,\omega);P)\}\] as well as the multifunction $M_{\omega}:\R^n\rightrightarrows \R^m$, $M_{\omega}(x):=q(x,\omega)-P$.
By \cite[Proposition1]{HenOut05} and Corollary \ref{CorInclNormalCone}, respectively, we have
\[T(x;{\cal F}(\omega))=T^{\rm lin}_\omega(x),\quad \hat N(x;{\cal F}(\omega))\subset\nabla_x q(x,\omega)^T N(q(x,\omega);P)\]
for every $x\in {\cal F}(\omega)$ such that $M_\omega$ is metrically subregular at $(x,0)$.

It is well known that the property of metric regularity is stable under small Lipschitzian perturbations, see e.g. \cite[Section 4.1]{KlKum02}, \cite[Section 4.2.3]{Mo06a}. We state here the following result:

\begin{theorem}\label{ThMetrRegulPerturb}
Assume that $M_{\ob}$ is metrically regular around $(\xb,0)$. Then there are neighborhoods $U$ of $\bar x$, $V$ of $0$, $W$ of $\bar\omega$ and a constant $\kappa>0$ such that
\begin{equation}\dist{x,M_\omega^{-1}(y)}\leq \kappa \dist{y,M_\omega(x)}=\kappa \dist{q(x,\omega)-y,P}\quad\forall (x,y,\omega)\in U\times V\times W.\label{EqBndFeasReg}\end{equation}
In particular we have ${\cal F}(\omega)\not=\emptyset$, $\dist{\xb,{\cal F}(\omega)}\leq \kappa\norm{q(\xb,\omega)-q(\xb,\ob)}\leq\kappa e_1(\omega)$ for all $\omega\in W$ and for every $x\in {\cal F}(\omega)\cap U$  with $\omega \in W$  the multifunction $M_\omega$ is metrically regular with modulus $\kappa$ around $(x,0)$.

\end{theorem}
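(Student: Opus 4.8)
The plan is to derive everything from the uniform-modulus stability of metric regularity under small perturbations, applied to the parametrized family $M_\omega(x) = q(x,\omega) - P$. The starting point is that $M_{\ob}$ is metrically regular around $(\xb,0)$; by the Mordukhovich criterion (Theorem~\ref{ThCharMetrSubReg}(1)) this is a condition on $\nabla_x q(\xb,\ob)$ and $N(q(\xb,\ob);P)$, but more usefully it is equivalent to a uniform bound that persists under Lipschitzian perturbation. Concretely, the distance $\dist{y,M_\omega(x)} = \dist{q(x,\omega)-y,P}$ depends on $(x,\omega)$ only through $q(x,\omega)$, so the whole family is a perturbation of the single map $x\mapsto q(x,\ob)-P$ by the continuous perturbation $x\mapsto q(x,\omega)-q(x,\ob)$, whose $C^1$-size in $x$ tends to $0$ as $\omega\to\ob$ by Assumption~\ref{AssBasic} (continuity of $q$ and $\nabla_x q$ at $(\xb,\ob)$). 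Thus I would invoke a standard perturbation theorem for metric regularity (the references \cite[Section 4.1]{KlKum02}, \cite[Section 4.2.3]{Mo06a} are cited precisely for this) to get neighborhoods $U$ of $\xb$, $V$ of $0$, $W$ of $\ob$ and a single modulus $\kappa>0$ such that \eqref{EqBndFeasReg} holds for all $(x,y,\omega)\in U\times V\times W$.

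\textbf{Key steps, in order.} First, record that $M_{\ob}$ metrically regular around $(\xb,0)$ gives, by definition, neighborhoods and a modulus $\kappa_0$ with $\dist{x,M_{\ob}^{-1}(y)}\le\kappa_0\dist{y,M_{\ob}(x)}$ locally. Second, observe that for $M_\omega(x)=q(x,\omega)-P$ one has $\dist{y,M_\omega(x)}=\dist{q(x,\omega)-y,P}$, and that the map $(x,\omega)\mapsto q(x,\omega)$ is $C^1$ in $x$ with $\nabla_x q$ continuous at $(\xb,\ob)$; hence, given any $\eta>0$, one can shrink $U$ and $W$ so that $\norm{q(x,\omega)-q(x,\ob)}\le\eta$ and $\norm{\nabla_x q(x,\omega)-\nabla_x q(x,\ob)}\le\eta$ on $U\times W$ — this is essentially \eqref{EqError1} from Lemma~\ref{LemTechn} with $e_1(\omega)$ small. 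Third, apply the quantitative perturbation result: choosing $\eta$ small enough (depending on $\kappa_0$), for every $\omega\in W$ the map $M_\omega$ is metrically regular around $(x,0)$ for every $x\in U$ with a common modulus $\kappa$ (e.g. $\kappa=2\kappa_0$), and the uniform estimate \eqref{EqBndFeasReg} holds after possibly shrinking $U,V,W$. Fourth, deduce the ``in particular'' consequences: since $0\in V$ and $\xb\in U$, \eqref{EqBndFeasReg} with $y=0$, $x=\xb$ gives $\dist{\xb,{\cal F}(\omega)}=\dist{\xb,M_\omega^{-1}(0)}\le\kappa\dist{q(\xb,\omega),P}\le\kappa\norm{q(\xb,\omega)-q(\xb,\ob)}$ (using $q(\xb,\ob)\in P$), and the latter is $\le\kappa e_1(\omega)$ by the definition of $e_1$; in particular ${\cal F}(\omega)\ne\emptyset$ for $\omega\in W$. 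Finally, for $x\in{\cal F}(\omega)\cap U$, the pointwise metric regularity of $M_\omega$ around $(x,0)$ with modulus $\kappa$ is exactly the restriction of \eqref{EqBndFeasReg} to a neighborhood of that particular $(x,0)$, which lies inside $U\times V$ once $U,V$ are taken small.

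\textbf{The main obstacle} is making the perturbation argument quantitatively precise with a \emph{single} modulus $\kappa$ valid simultaneously at all base points $x\in U$ and all parameters $\omega\in W$, rather than just at $(\xb,0)$. The cited literature handles this, but the cleanest self-contained route is: the Mordukhovich criterion for $M_{\ob}$ at $\xb$ is an open condition, so it persists (with a uniformly bounded modulus via the coderivative characterization in Theorem~\ref{ThCharMetrSubReg}(1)) at all $(x,q(x,\omega))$ with $(x,\omega)$ near $(\xb,\ob)$ — here one uses upper semicontinuity of the limiting normal cone $N(\cdot;P)$ together with the fact that $P$ is a union of finitely many polyhedra (so $N(y;P)$ takes only finitely many ``shapes''), which forces $\norm{\nabla_x q(x,\omega)^T\lambda}\ge c\norm\lambda$ for $\lambda\in N(q(x,\omega);P)$ with a uniform $c>0$, hence a uniform modulus $\kappa\le 1/c$ (up to constants). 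Combining this uniform Mordukhovich estimate with a uniform local version of the Lyusternik–Graves / Robinson argument then yields \eqref{EqBndFeasReg} on a product neighborhood; the remaining bookkeeping — extracting ${\cal F}(\omega)\ne\emptyset$, the $\kappa e_1(\omega)$ bound, and the pointwise conclusion — is then immediate as above.
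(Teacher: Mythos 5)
Your proposal is correct and follows essentially the same route as the paper: view $M_\omega$ as a Lipschitzian perturbation of $M_{\ob}$ whose $C^1$-smallness on a product neighborhood comes from Lemma~\ref{LemTechn} (estimate \eqref{EqError1}), invoke the quantitative persistence of metric regularity to get a single modulus $\kappa$ on $U\times V\times W$, and then read off the ``in particular'' claims by taking $x=\xb$, $y=0$ and restricting \eqref{EqBndFeasReg}. The paper merely carries out the uniformity bookkeeping you delegate to the literature explicitly, by applying \cite[Theorem 4.3]{KlKum02} to suitably chosen perturbation functions $g,g'$ for each fixed triple $(\xi,y,\omega)$, with $\kappa=2(\kappa'+1)$.
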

\begin{proof}A similar result was stated in \cite[Lemma
3.1]{GuLiYe12}. However, the given proof appears to be not correct, since the variable $x$ is used in an ambiguous way and therefore, in the notation of \cite{GuLiYe12}, the claimed equation ${\rm dist\,}(x,S_p^{-1}(u)={\rm dist\,}(x, S_{p^\ast}^{-1}(u+F(x,p^\ast)-F(x,p)))$ does not hold true in general. Thus we present a different proof.
Let $\delta>0$,
$\kappa'>0$ be chosen such that
\[
\dist{x,M_{\ob}^{-1}(y)}\leq \kappa' \dist{y,M_{\ob}(x)}\quad \forall (x,y)\in(\xb+\delta \B_{\R^n})\times \delta \B_{\R^m}.
\]
Setting $\epsilon:=\frac 1{24(\kappa'+1)}$,
we denote by $\hat U_\epsilon$ and $\hat W_\epsilon$ the neighborhoods according to Lemma \ref{LemTechn}, and
 we choose some radius $r\in (0,\min\{\frac \delta2,1\})$ with $\xb+r\B_{\R^n}\subset \hat U_\epsilon$. Then we choose some positive radius $\bar r\leq \frac 12 r$ and some neighborhood $W\subset \hat W_\epsilon$ such that
 \[
 \mbox{$\norm{q(x,\omega)-q(\xb,\ob)}< \frac12\epsilon r$ $\forall (x,\omega)\in (\xb+\bar r \B_{\R^n})\times W$
  \ and \ $e_1(\omega)<\epsilon \bar r$ $\forall \omega\in W$.}
 \]
 We now show that the assertion of the theorem holds with $U:=\xb+\bar r\B_{\R^n}$, $V:=\frac 14\epsilon r\B_{\R^m}$ and $\kappa=2(\kappa'+1)$. Consider arbitrarily fixed elements $(\xi,y,\omega)\in U\times V\times W$ and define the functions
 \[
  \mbox{$g'(x):=q(x,\ob)-q(x,\omega)+y$ \ and \  $g(x):=q(x,\ob)-q(x,\omega)+\zeta$,}
 \]
  where $\zeta\in M_\omega(\xi)$ is chosen such that
  \[
  \norm{y-\zeta}=\dist{y,M_\omega(\xi)}=\dist{y, q(\xi,\omega)-P}\leq \norm{y-(q(\xi,\omega)-q(\xb,\ob))}
   \]
 and, consequently, $\norm{\zeta}\leq2\norm{y}+\norm{q(\xi,\omega)-q(\xb,\ob)}<\epsilon r$. Then $g$ and $g'$ are  Lipschitz on $\xb+r\B_{\R^n}$ with constant
less than or equal to
\[\sup\{\norm{\nabla _x q(x,\omega)-\nabla_x q(x,\ob)}\mv x\in \xb+r\B_{\R^n}\}\leq e_1(\omega)+\epsilon r<2\epsilon r\leq 2\epsilon <\frac 1{2(\kappa'+1)},\]
where the first inequality follows from (\ref{EqError1}). Further we  have
\[\sup\{\norm{g'(x)}\mv x\in\xb+r\B_{\R^n}\}< e_1(\omega)+\epsilon r+ \norm{y}<3\epsilon r <\frac r{8(\kappa'+1)}\]
and
\[\sup\{\norm{g(x)}\mv x\in\xb+r\B_{\R^n}\}\leq e_1(\omega)+\epsilon r+ \norm{\zeta}<3\epsilon r =\frac r{8(\kappa'+1)}.\]
Since $g(\xi)\in M_{\ob}(\xi)$, we can apply \cite[Theorem 4.3]{KlKum02} to find some $\xi'$ such that $g'(\xi')\in M_{\ob}(\xi')$ and $\norm{\xi-\xi'}\leq 2(\kappa'+1)\norm{g'(\xi)-g(\xi)}=2(\kappa'+1)\norm{y-\zeta}$. It follows that $y\in q(\xi',\omega)-P$ and thus $\xi'\in M_\omega^{-1}(y)$ and
\[\dist{\xi,M_{\omega}^{-1}(y)}\leq 2(\kappa'+1)\norm{y-\zeta}=2(\kappa'+1)\dist{y,M_{\omega}(\xi)},\]
showing \eqref{EqBndFeasReg}. Taking $\xi=\xb$, $y=0$, we obtain $\dist{\xb,M_{\omega}^{-1}(0)}=\dist{\xb,{\cal F}(\omega)}\leq \kappa \dist{0,M_\omega(\xb)}\leq \kappa\norm{q(\xb,\omega)-q(\xb,\ob)}\leq \kappa e_1(\omega)$ and thus ${\cal F}(\omega)\not=\emptyset$. To complete the proof, note that metric regularity of $M_\omega$ around $(x,0)$, where $x\in{\cal F}(\omega)\cap U$, is a simple consequence of \eqref{EqBndFeasReg}.
\qed\end{proof}

When we do not assume metric regularity of $M_{\bar\omega}$ around $(\xb,0)$, then we cannot expect in general that
the multifunctions $M_\omega$, for $\omega$ near $\bar\omega$, are metrically regular around all points $(x,0)$ with $x\in{\cal F}(\omega)$ close to $\xb$. To handle also this situation, we give the following definition.

\begin{definition}\label{DefTechn}Let  $l\in\{1,2\}$. We say that property $R_l$  holds if there are neighborhoods $U$ of $\bar x$, $W$ of $\bar\omega$ and constants $\kappa>0$ and $\gamma>0$ such that for every $\omega\in W$ and every $x\in {\cal F}(\omega)\cap U$ with $\norm{x-\xb}>\gamma  e_l(\omega)$, the multifunction $M_\omega$ is metrically regular with modulus $\kappa$ around $(x,0)$.
\end{definition}
In particular, properties $R_1$ and $R_2$ imply that $M_{\ob}$ is metrically regular with some uniform modulus around every point $(x,0)$
with $x\in{\cal F}(\ob)\setminus \{\xb\}$ close to $\xb$.

We will now show that property $R_1$ or
property $R_2$ holds if $M_{\ob}$ fulfills at
$(\xb,0)$ the condition FOSCMS or SOSCMS, respectively.

\begin{proposition}\label{PropPropertyR}
\begin{enumerate}
\item Assume that FOSCMS is fulfilled for $M_{\ob}$ at $\xb$, i.e., for every direction $0\not=u\in T^{\rm lin}_{\bar\omega}(\xb)$  we have
\[\nabla_x q(\xb,\bar\omega)^T\lambda=0,\ \lambda\in N(q(\xb,\bar\omega);P;\nabla_x q(\xb,\bar\omega)u) \Longrightarrow \lambda=0.\]
Then property $R_1$ holds.
\item If for every direction $0\not=u\in T^{\rm lin}_{\bar\omega}(\xb)$  we have
\begin{equation}\label{EqSuffCondR2}\nabla_x q(\xb,\bar\omega)^T\lambda=0,\ \lambda\in N(q(\xb,\bar\omega);P;\nabla_x q(\xb,\bar\omega)u), \ u^T\nabla_x^2 (\lambda^Tq)(\bar x,\bar\omega)u=0\ \Longrightarrow \lambda=0,
\end{equation}
then property $R_2$ holds. In particular, SOSCMS implies property $R_2$.
\end{enumerate}
\end{proposition}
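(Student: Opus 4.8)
The plan is to argue by contradiction, exploiting the directional characterizations of metric regularity from Theorem~\ref{ThCharMetrSubReg}(2) together with the technical estimates of Lemma~\ref{LemTechn}. Suppose property $R_l$ fails. Then for every choice of neighborhoods and every constant $\kappa$ and $\gamma$ there is a violating point; taking $\kappa=\gamma=k$ and shrinking neighborhoods to singletons, we obtain sequences $\omega_k\to\ob$ and $x_k\in{\cal F}(\omega_k)$ with $x_k\to\xb$, $\norm{x_k-\xb}>k\,e_l(\omega_k)$, such that $M_{\omega_k}$ is \emph{not} metrically regular with modulus $k$ around $(x_k,0)$. By the Mordukhovich criterion (Theorem~\ref{ThCharMetrSubReg}(1)), for each $k$ there is a nonzero $\lambda_k\in N(q(x_k,\omega_k);P)$ with $\norm{\nabla_x q(x_k,\omega_k)^T\lambda_k}\le \frac1k\norm{\lambda_k}$; normalizing, we may take $\norm{\lambda_k}=1$. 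Passing to a subsequence, set $u_k:=(x_k-\xb)/\norm{x_k-\xb}\to u$ with $\norm u=1$, and $\lambda_k\to\lambda$ with $\norm\lambda=1$. In the limit, $\nabla_x q(\xb,\ob)^T\lambda=0$.

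The next step is to identify which directional normal cone $\lambda$ belongs to, and this is where the hypotheses on $e_l(\omega_k)$ enter. Since $x_k\in{\cal F}(\omega_k)$ we have $q(x_k,\omega_k)\in P$; I want to show $q(x_k,\omega_k)=q(\xb,\ob)+t_k w_k$ with $t_k\downarrow0$ and $w_k\to\nabla_x q(\xb,\ob)u$, so that $\lambda_k\in\hat N(q(x_k,\omega_k);P)\subset N(q(\xb,\ob);P;\nabla_x q(\xb,\ob)u)$ after passing to the limit (using that $P$ is a union of polyhedra, so $\hat N$ is locally constant along the relevant segments, as in Lemma~\ref{LemTangDirNormalConeCartProduct}). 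Writing $q(x_k,\omega_k)-q(\xb,\ob)=\big(q(x_k,\omega_k)-q(x_k,\ob)\big)+\big(q(x_k,\ob)-q(\xb,\ob)\big)$, the second term is $\nabla_x q(\xb,\ob)(x_k-\xb)+o(\norm{x_k-\xb})$ by smoothness, and the first term is controlled by Lemma~\ref{LemTechn}: in the case $l=1$, \eqref{EqError1} gives $\norm{q(x_k,\omega_k)-q(x_k,\ob)}\le e_1(\omega_k)+\epsilon\norm{x_k-\xb}$, and since $e_1(\omega_k)<\frac1k\norm{x_k-\xb}$ this whole term is $o(\norm{x_k-\xb})+\epsilon\norm{x_k-\xb}$; as $\epsilon$ is arbitrary, dividing by $\norm{x_k-\xb}$ and letting $k\to\infty$ yields $w_k\to\nabla_x q(\xb,\ob)u$. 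Thus $\lambda\in N(q(\xb,\ob);P;\nabla_x q(\xb,\ob)u)$, and together with $0\ne u\in T^{\rm lin}_{\ob}(\xb)$ (which follows because $\nabla_x q(\xb,\ob)u\in T(q(\xb,\ob);P)$, a consequence of $w_k\to\nabla_x q(\xb,\ob)u$ with $q(\xb,\ob)+t_kw_k\in P$) and $\nabla_x q(\xb,\ob)^T\lambda=0$, $\norm\lambda=1$, this contradicts FOSCMS, proving part~1.

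For part~2, the same scheme applies but I must additionally extract the second-order information $u^T\nabla_x^2(\lambda^Tq)(\xb,\ob)u=0$. Here I use the sharper estimate \eqref{EqError2} of Lemma~\ref{LemTechn} together with the fact that $\norm{x_k-\xb}>k\,e_2(\omega_k)$, so $e_2(\omega_k)=o(\norm{x_k-\xb})$ and in fact $e_2(\omega_k)^2=o(\norm{x_k-\xb}^2)$; thus $\norm{q(x_k,\omega_k)-q(x_k,\ob)}\le e_2(\omega_k)^2+e_2(\omega_k)\norm{x_k-\xb}+\frac\epsilon2\norm{x_k-\xb}^2$ is $o(\norm{x_k-\xb})+\frac\epsilon2\norm{x_k-\xb}^2$, which after dividing by $\norm{x_k-\xb}$ still goes to zero but, crucially, when divided by $\norm{x_k-\xb}^2$ is bounded. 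Combining this with a second-order Taylor expansion of $q(\cdot,\ob)$ at $\xb$ and pairing against $\lambda_k$, one writes the quantity $\lambda_k^T(q(x_k,\omega_k)-q(\xb,\ob))$ two ways — it is nonnegative or can be estimated from one side because $q(x_k,\omega_k)\in P$ and $\lambda_k\in\hat N$, hence $\lambda_k^T(q(\xb,\ob)-q(x_k,\omega_k))\le0$ since $q(\xb,\ob)\in P$ — and from the Taylor expansion the leading term after cancelling the vanishing first-order part $\lambda_k^T\nabla_x q(\xb,\ob)(x_k-\xb)$ (which is $o(\norm{x_k-\xb})$ times $1$ since $\nabla_x q(x_k,\omega_k)^T\lambda_k\to0$, needing a little care) is $\frac12\norm{x_k-\xb}^2 u_k^T\nabla_x^2(\lambda_k^Tq)(\xb,\ob)u_k$. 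Taking limits forces $u^T\nabla_x^2(\lambda^Tq)(\xb,\ob)u\le0$; the reverse inequality $\ge0$ comes similarly (or is automatic if only one sign is needed for \eqref{EqSuffCondR2}), so it equals zero, contradicting \eqref{EqSuffCondR2}. The main obstacle is precisely this bookkeeping in part~2: correctly isolating the second-order term, ensuring the first-order cross term $\lambda_k^T\nabla_x q(\xb,\ob)(x_k-\xb)$ is genuinely $o(\norm{x_k-\xb}^2)$ rather than merely $o(\norm{x_k-\xb})$ (which requires using $\norm{\nabla_x q(x_k,\omega_k)^T\lambda_k}\le\frac1k$ at the right scale, or replacing $\nabla_x q(\xb,\ob)$ by $\nabla_x q(x_k,\omega_k)$ and absorbing the difference via Assumption~\ref{AssBasic}), and handling the perturbation terms $e_2(\omega_k)\norm{x_k-\xb}$ so they land in the $o(\norm{x_k-\xb}^2)$ bucket — which works exactly because $e_2(\omega_k)<\frac1k\norm{x_k-\xb}$. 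The final assertion that SOSCMS implies \eqref{EqSuffCondR2}, and hence $R_2$, is immediate since SOSCMS imposes its conclusion under the weaker hypothesis $u^T\nabla_x^2(\lambda^Tq)(\xb,\ob)u\ge0$.
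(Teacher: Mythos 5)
Your part 1 follows the paper's scheme and is essentially sound, but with one imprecision you should repair: the criterion you invoke for failure of metric regularity with modulus $k$ yields a unit vector $\lambda_k$ in the \emph{limiting} normal cone $N(q(x_k,\omega_k);P)$, not in $\hat N(q(x_k,\omega_k);P)$ as you assert; for a union of polyhedra these cones differ (e.g.\ $Q_{EC}$ at the origin). To conclude $\lambda\in N(q(\xb,\ob);P;\nabla_xq(\xb,\ob)u)$ you must diagonalize as the paper does: choose $q^k\in P$ with $\norm{q^k-q(x_k,\omega_k)}\leq\norm{x_k-\xb}/k$ and a \emph{regular} normal $\xi^k\in\hat N(q^k;P)$ with $\norm{\xi^k-\lambda_k}\leq\norm{x_k-\xb}/k$, so that the approximation error is small relative to $\norm{x_k-\xb}$ and the directional limit is preserved. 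With that fix, part 1 goes through exactly as in the paper.

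Part 2 has a genuine gap. First, your one-sided estimate $\lambda_k^T(q(\xb,\ob)-q(x_k,\omega_k))\leq 0$ presupposes that $\lambda_k$ is a regular normal at a point of $P$ lying in a convex piece that also contains $q(\xb,\ob)$; for a nonconvex $P$ the Fr\'echet normal cone gives no such global inequality, so again the surrogate pair $(q^k,\xi^k)$ is indispensable, and now with second-order accuracy $\norm{x_k-\xb}^2/k$ so that it survives division by $\norm{x_k-\xb}^2$. Second, the cross term $\lambda_k^T\nabla_xq(\xb,\ob)(x_k-\xb)$ cannot be shown to be $o(\norm{x_k-\xb}^2)$ by either of your suggested devices: $\norm{\nabla_xq(x_k,\omega_k)^T\lambda_k}\leq 1/k$ only yields a bound of order $\norm{x_k-\xb}/k$ (and $\norm{x_k-\xb}\leq 1/k$, so this need not be $o(\norm{x_k-\xb}^2)$), while replacing $\nabla_xq(\xb,\ob)$ by $\nabla_xq(x_k,\omega_k)$ leaves a remainder of order $\norm{x_k-\xb}^2$ with a non-vanishing constant. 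Third, and most importantly, even if these points were fixed your mechanism delivers only one inequality, $u^T\nabla_x^2(\lambda^Tq)(\xb,\ob)u\geq 0$; that would contradict SOSCMS but not \eqref{EqSuffCondR2}, whose hypothesis demands exact equality, and your claim that ``the reverse inequality comes similarly'' is unsupported -- there is no symmetric normal-cone inequality. The paper obtains the equality by a different device that your proposal lacks: after fixing the index sets ${\cal P}(q^k)$ and ${\cal A}_i(q^k)$ along a subsequence, it writes $\xi^k=\sum_{j\in{\cal A}_i}\mu^k_{ij}a_{ij}$ with bounded multipliers, passes to the limit to get $\lambda=\sum_{j\in{\cal A}_i}\mu_{ij}a_{ij}$ supported on constraints active at both $q^k$ and $q(\xb,\ob)$, whence the exact identity $\lambda^Tq^k=\lambda^Tq(\xb,\ob)$ for every $k$; dividing by $\norm{x_k-\xb}^2$ and Taylor-expanding, the first-order term vanishes \emph{exactly} because the fixed limit $\lambda$ satisfies $\nabla_xq(\xb,\ob)^T\lambda=0$, which yields $u^T\nabla_x^2(\lambda^Tq)(\xb,\ob)u=0$. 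This exact polyhedral active-set identity, applied to the fixed limit $\lambda$ rather than to $\lambda_k$, is the missing idea; without it your argument proves at best that SOSCMS implies $R_2$, not the stated result under \eqref{EqSuffCondR2}.
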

\begin{proof}
To prove the first part, assume on the contrary that for every $k$
we can find $(x^k,\omega^k)\in \hat U_{1/k}\times \hat W_{1/k}$
with $x^k\in{\cal F}(\omega^k)\cap (\xb+\frac 1k \B_{R^n})$,
$\norm{\nabla_x q(x^k,\omega^k)-\nabla_x q(\xb,\ob)}\leq\frac 1k$,
$\norm{x^k-\xb}>k e_1(\omega^k)$ such that $M_{\omega^k}$ is not
metrically regular around $(x^k,0)$ with some modulus
less than or equal to
$k$, where $\hat U_{1/k}$, $\hat W_{1/k}$ are as in Lemma
\ref{LemTechn}. By \cite[Example 9.44]{RoWe98}, there is some
$\lambda^k\in N(q(x^k,\omega^k);P)$ with $\norm{\lambda^k}=1$ and
$\norm{\nabla_x q(x^k,\omega^k)^T\lambda^k}\leq \frac 1k$.
According to the definition of the limiting normal cone,
for each $k$ we can find
 elements $q^k\in
(q(x^k,\omega^k)+\frac{\norm{x^k-\xb}}k \B_{\R^n})\cap P$ and
$\xi^k\in\hat N(q^k,P)\cap (\lambda^k+\frac
{\norm{x^k-\xb}}k\B_{\R^n})$. By passing to a subsequence if
necessary, we can assume that $u^k:=(x^k-\xb)/\norm{x^k-\xb}\to u$
and
$\lim_{k\to\infty}\lambda^k=\lim_{k\to\infty}\xi^k=\lambda\not=0$.
Because of Lemma \ref{LemTechn} we have
$\norm{q(x^k,\omega^k)-q(x^k,\ob)}/\norm{x^k-\xb}\leq e_1(\omega^k)/\norm{x^k-\xb} +1/k\leq 2/k$
and therefore
\begin{eqnarray*}
\lim_{k\to\infty}\frac{q^k-q(\xb,\ob)}{\norm{x^k-\xb}}&=&\lim_{k\to\infty}\frac{q(x^k,\omega^k)-q(\xb,\ob)}{\norm{x^k-\xb}}\\
& = &
\lim_{k\to\infty}\left(\frac{q(x^k,\omega^k)-q(x^k,\ob)}{\norm{x^k-\xb}}+\frac{q(x^k,\ob)-q(\xb,\ob)}{\norm{x^k-\xb}}\right)\\
&=&\nabla_xq(\xb,\ob)u,
\end{eqnarray*}
showing $0\not=u\in T^{\rm lin}_{\bar\omega}(\xb)$ and $\lambda\in N(q(\xb,\bar\omega);P;\nabla_x q(\xb,\bar\omega)u)$. Since we also have
\[0=\lim_{k\to\infty}\nabla_x q(x^k,\omega^k)^T\lambda^k=\nabla_x q(\xb,\bar\omega)^T\lambda,\]
we obtain a contradiction to the assumption. Hence the first part is proved.

To prove the second part,  assume on the contrary that for every  $k$ we can
find $(x^k,\omega^k)\in \hat U_{1/k}\times \hat W_{1/k}$ with
$x^k\in{\cal F}(\omega^k)\cap (\xb+\frac 1k \B_{R^n})$,
$\norm{\nabla_x q(x^k,\omega^k)-\nabla_x q(\xb,\ob)}\leq\frac 1k$,
$\norm{x^k-\xb}>k e_2(\omega^k)$ such that $M_{\omega^k}$ is not
metrically regular around $(x^k,0)$ with some modulus
less than or equal to
than $k$.  Then $\norm{q(\xb,\omega^k)-q(\xb,\ob)}^{\frac 12}\leq
e_2(\omega^k)\leq 1/k^2\leq 1$ and  $e_2(\omega^k)\geq
e_1(\omega^k)$ follows.  Hence we can proceed similarly as in the first part of the proof
to find the sequences $(\lambda^k)$, $(\xi^k)$ and $(q^k)$ together
with the limits $0\not=u\in T^{\rm lin}_{\bar\omega}(\xb)$ and
$0\not=\lambda\in N(q(\xb,\bar\omega);P;\nabla_x
q(\xb,\bar\omega)u)$ satisfying $\nabla_x
q(\xb,\bar\omega)^T\lambda=0$, with the only difference that we now
require $q^k\in (q(x^k,\omega^k)+\frac{\norm{x^k-\xb}^2}k
\B_{\R^n})\cap P$. By passing to a subsequence if necessary, we can
assume that there are index sets  ${\cal P}\subset \{1,\ldots,p\}$
and ${\cal A}_i\subset\{1,\ldots,m_i\}$, $i\in{\cal P}$, such that
${\cal P}(q^k)={\cal P}$ and ${\cal A}_i(q^k)={\cal A}_i$,
$i\in{\cal P}$, holds for all $k$. Further  there are numbers
$\mu_{ij}^k\geq 0$, $j\in{\cal A}_i$, $i\in{\cal P}$, such that
\[
 \xi^k-\sum_{j\in{\cal A}_i}\mu_{ij}^ka_{ij}=0, \ i \in{\cal P},
 \]
and $\sum_{j\in{\cal A}_i}\mu_{ij}^k\leq \beta_i\norm{\xi^k}$ for some constant $\beta_i,$
$i \in{\cal P}$.  By passing to a subsequence once more, we can assume that the sequences
$\mu_{ij}^k$ converge to $\mu_{ij}\geq0$ for each $j\in{\cal
A}_i$, $i\in{\cal P}$, and it follows that $\lambda-\sum_{j\in{\cal
A}_i}\mu_{ij}a_{ij}=0,\ i\in{\cal P}$. Since
${\cal P}\subset{\cal P}(q(\xb,\ob))$ and
${\cal A}_i\subset{\cal A}_i(q(\xb,\ob))$, $i\in{\cal P}$,
we obtain for each $i\in{\cal P}$
\[\lambda^T q^k=\sum_{j\in {\cal A}_i}\mu_{ij}a_{ij}^Tq^k=
\sum_{j\in {\cal A}_i}\mu_{ij}b_{ij}=\sum_{j\in {\cal
A}_i}\mu_{ij}a_{ij}^Tq(\xb,\bar\omega)=\lambda^Tq(\xb,\bar\omega)\
\forall k.\]  Using Lemma \ref{LemTechn} we have
\[
\| q(x^k,\omega^k) - q( x^k, \bar \omega) \|
\le e_2(\omega^k)^2 + e_2(\omega^k) \|x^k - \xb\| + \frac{1}{2k} \|x^k - \xb\|^2
\le (\frac{1}{k^2} + \frac{1}{k} + \frac{1}{2k}) \|x^k - \xb\|^2,
\]
and so, together with $\| q^k - q(x^k,\omega^k) \| \le \frac{1}{k} \|x^k - \xb\|^2$ and
$\nabla_x q(\xb,\bar\omega)^T\lambda=0$, we obtain
\begin{eqnarray*}
0&=&\lim_{k\to\infty}\frac{\lambda^T(q^k-q(\xb,\bar\omega))}{\norm{x^k-\xb}^2}=\lim_{k\to\infty}\frac{\lambda^T(q^k-q(x^k,\omega^k) +q(x^k,\omega^k)-q(x^k,\bar\omega)+q(x^k,\bar\omega)-q(\xb,\bar\omega))}{\norm{x^k-\xb}^2}\\
&=&\lim_{k\to\infty}\frac{\lambda^T(q(x^k,\bar\omega)-q(\xb,\bar\omega))}{\norm{x^k-\xb}^2}=\lim_{k\to\infty}\frac{\lambda^T(\nabla_xq(\xb,\bar\omega)(x^k-\xb)+\frac 12 (x^k-\xb)^T\nabla_x^2q(\xb,\bar\omega)(x^k-\xb))}{\norm{x^k-\xb}^2}\\
&=&\frac 12u^T\nabla_x^2(\lambda^T q)(\xb,\bar\omega)u,
\end{eqnarray*}
contradicting
\eqref{EqSuffCondR2}.  The last statement follows from the observation that SOSCMS implies \eqref{EqSuffCondR2}.
\qed\end{proof}

\begin{proposition}\label{PropFeas}Let $\xb\in{\cal F}(\bar\omega)$.
\begin{enumerate}
\item Assume that there is some $0\not=u\in T^{\rm lin}_{\bar\omega}(\xb)$ such that
\[\nabla_x q(\xb,\bar\omega)^T\lambda=0,\ \lambda\in N(q(\xb,\bar\omega);P;\nabla_x q(\xb,\bar\omega)u)\ \Longrightarrow \lambda=0.\]
Then there is a constant $\kappa>0$ and a neighborhood $W$ of $\bar\omega$ such that
\[\dist{\xb,{\cal F}(\omega)}\leq \kappa \norm{q(\xb,\omega)-q(\xb,\ob)}\leq \kappa e_1(\omega)\ \forall \omega\in W.\]
\item Assume that there is some $0\not=u\in T^{\rm lin}_{\bar\omega}(\xb)$ such that
\[\nabla_x q(\xb,\bar\omega)^T\lambda=0,\ \lambda\in N(q(\xb,\bar\omega);P;\nabla_x q(\xb,\bar\omega)u), \ u^T\nabla_x^2 (\lambda^Tq)(\bar x,\bar\omega)u\geq0\ \Longrightarrow \lambda=0.\]
Then there is a constant $\kappa>0$ and a neighborhood $W$ of $\bar\omega$ such that
\[\dist{\xb,{\cal F}(\omega)}\leq \kappa e_2(\omega)\ \forall \omega\in W.\]
\end{enumerate}
\end{proposition}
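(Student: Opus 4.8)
The plan is to establish both items by a directional metric regularity argument applied to the fixed mapping $M_{\ob}$, combined with the perturbation estimates of Lemma \ref{LemTechn}, along the lines of the proof of Theorem \ref{ThMetrRegulPerturb} but localized to a single critical direction $u$. For item (1), the hypothesis is precisely the condition in part 2 of Theorem \ref{ThCharMetrSubReg} (equivalently, the characterization of metric regularity in direction $(u,0)$), so $M_{\ob}$ is metrically regular in direction $(u,v)$ at $(\xb,0)$ for every $v$ with $\nabla_x q(\xb,\ob)u - v$ in the relevant cone; in particular, since $N(q(\xb,\ob);P;\nabla_x q(\xb,\ob)u)$ is what appears, $M_{\ob}$ is metrically regular in direction $(u,0)$ at $(\xb,0)$. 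Hence there are $\rho,\delta,\kappa'>0$ such that \eqref{EqDirMetrReg} holds on the conical neighborhood $(\xb,0)+V_{\rho,\delta}(u,0)$ restricted to pairs $(x,y)$ close to $\Gr M_{\ob}$. The idea is then to feed the point $x = \xb$ and a suitable target $y$ built from $q(\xb,\omega)-q(\xb,\ob)$ into this estimate: because $\norm{q(\xb,\omega)-q(\xb,\ob)}\le e_1(\omega)\to 0$, for $\omega$ near $\ob$ the pair $(\xb,y)$ with $y := q(\xb,\ob)-q(\xb,\omega)\in M_{\ob}(\xb)-\text{(shift)}$ — more precisely we track $0\in M_\omega(\xb')$ by rewriting it as a perturbed inclusion for $M_{\ob}$ — lies in the admissible region, and directional metric regularity yields $\dist{\xb, M_{\ob}^{-1}(y)}\le \kappa'\dist{y,M_{\ob}(\xb)}\le \kappa'\norm{q(\xb,\omega)-q(\xb,\ob)}$, from which $\dist{\xb,{\cal F}(\omega)}\le \kappa\norm{q(\xb,\omega)-q(\xb,\ob)}\le\kappa e_1(\omega)$.

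The technical subtlety — and the step I expect to be the main obstacle — is verifying that the point we plug in genuinely lies in the directional neighborhood $\xb + V_{\rho,\delta}(u)$ and satisfies the side condition $\dist{(x,y),\Gr M_{\ob}}\le \delta\norm{(x,y)-(\xb,0)}$ required in the definition of directional metric regularity. The clean way around this is the standard ``as $\omega\to\ob$ the perturbation is $o$ of everything'' reasoning used elsewhere in the paper: one argues by contradiction, supposing sequences $\omega^k\to\ob$ with $\dist{\xb,{\cal F}(\omega^k)} > k\,\norm{q(\xb,\omega^k)-q(\xb,\ob)}$, picks $x^k\in{\cal F}(\omega^k)$ nearly realizing the distance, normalizes $u^k := (x^k-\xb)/\norm{x^k-\xb}$, and extracts $u^k\to u$ — but here $u$ need not be the specific direction in the hypothesis, so this contradiction route does not immediately close. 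Instead I would follow the constructive route: apply the directional metric regularity estimate for $M_{\ob}$ directly at $x=\xb$ (where no direction is needed because $x=\xb$ is the base point) with target $y^k$ chosen so that $\xb+V_{\rho,\delta}(u)$-membership is automatic for the \emph{output} point, mirroring the Lyusternik–Graves-type argument of \cite[Theorem 4.3]{KlKum02} used in the proof of Theorem \ref{ThMetrRegulPerturb}, and invoke \cite[Lemma 1 and Theorem 5]{Gfr13a} to get the solvability and the estimate with the correct modulus.

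For item (2), the hypothesis is exactly the SOSCMS-type condition of part 3 of Theorem \ref{ThCharMetrSubReg} specialized to the single direction $u$ (with the $\ge 0$ form of the curvature inequality), so $M_{\ob}$ is metrically subregular in direction $u$ at $(\xb,0)$; by \cite[Theorem 2.6]{Gfr14a} this upgrades, under the polyhedral structure of $P$, to a directional metric regularity statement of order $2$. One then repeats the argument of item (1) but with $e_2$ in place of $e_1$, using the quadratic error bound \eqref{EqError2} from Lemma \ref{LemTechn} to control $\norm{q(x,\omega)-q(x,\ob)}$ by $e_2(\omega)^2 + e_2(\omega)\norm{x-\xb} + O(\norm{x-\xb}^2)$; the point $x=\xb$ still makes the linear term vanish, leaving $\dist{\xb,{\cal F}(\omega)}\le \kappa\,\norm{q(\xb,\omega)-q(\xb,\ob)}^{1/2}\cdot(\text{bounded}) \le \kappa e_2(\omega)$ after absorbing constants. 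The essential point throughout is that evaluating at the base point $\xb$ sidesteps the directional-neighborhood membership issue that would otherwise be the crux, so the remaining work is bookkeeping with the constants coming from Lemma \ref{LemTechn} and the cited directional regularity theorems.
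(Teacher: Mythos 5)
There is a genuine gap, and it sits exactly at the step you flag as the crux. You propose to apply metric regularity of $M_{\ob}$ in direction $(u,0)$ at the pair $(x,y)=(\xb,y)$ with a nonzero target $y$ built from $q(\xb,\omega)-q(\xb,\ob)$, claiming that ``evaluating at the base point $\xb$ sidesteps the directional-neighborhood membership issue.'' It is the opposite: by the definition \eqref{EqDirMetrReg}, the estimate is only available for $(x,y)\in(\xb,0)+V_{\rho,\delta}((u,0))$, and for $(x,y)=(\xb,y)$ the increment is $(0,y)$, whose direction is orthogonal to $(u,0)$; indeed $\big\Vert\norm{(u,0)}(0,y)-\norm{(0,y)}(u,0)\big\Vert=\sqrt2\,\norm{u}\norm{y}$, so $(0,y)\notin V_{\rho,\delta}((u,0))$ for any $\delta<\sqrt2$, and the directional estimate gives no information at all at such pairs. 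Moreover, even where it applies, metric subregularity of $M_{\ob}$ in direction $u$ only bounds the distance to $M_{\ob}^{-1}(0)={\cal F}(\ob)$, whereas the proposition is about ${\cal F}(\omega)$; transferring the estimate to the perturbed map is exactly the Lyusternik--Graves mechanism of Theorem \ref{ThMetrRegulPerturb} via \cite[Theorem 4.3]{KlKum02}, which needs \emph{full} metric regularity around $(\xb,0)$ --- precisely what is not assumed here, directional (sub)regularity not being stable under such perturbations. For part 2, the claim that \cite[Theorem 2.6]{Gfr14a} ``upgrades'' the hypothesis to a directional metric regularity statement of order $2$ has no basis: that theorem gives directional metric subregularity of $M_{\ob}$ only, and your concluding inequality $\dist{\xb,{\cal F}(\omega)}\le\kappa\norm{q(\xb,\omega)-q(\xb,\ob)}^{1/2}\cdot(\text{bounded})$ is asserted, not derived.

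The paper's proof does use the contraposition route you set aside, and the objection you raise against it (the limiting direction need not be the $u$ from the hypothesis) is resolved by a construction that is missing from your proposal. One first uses directional metric subregularity of the \emph{unperturbed} map to produce auxiliary points $\xb^k\in{\cal F}(\ob)$ at distance roughly $t^k$ from $\xb$ along $u$, with $t^k$ calibrated to the perturbation, namely $t^k=4k^2\norm{q(\xb,\omega^k)-q(\xb,\ob)}$ in part 1 and $t^k=8k^2e_2(\omega^k)$ in part 2. Lemma \ref{LemTechn} then gives $\dist{q(\xb^k,\omega^k),P}\le\epsilon^k$ with $\epsilon^k$ small relative to $\norm{\xb^k-\xb}$, and Ekeland's variational principle applied to $x\mapsto\dist{q(x,\omega^k),P}$ starting at $\xb^k$ yields $x^k$ with $\norm{x^k-\xb^k}\le k\epsilon^k\ll\norm{\xb^k-\xb}$, which forces $(x^k-\xb)/\norm{x^k-\xb}\to u$ --- this is how the specific direction enters. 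The contraposition hypothesis ($\dist{\xb,{\cal F}(\omega^k)}$ much larger than the perturbation) guarantees $x^k\notin{\cal F}(\omega^k)$, so the first-order conditions for the Ekeland problem provide a unit vector $\xi^k\in\hat N(q^k;P)$ with $\norm{\nabla_xq(x^k,\omega^k)^T\xi^k}\le 1/k$, and any limit point $\lambda$ contradicts the assumed implication; in part 2 the polyhedral representation of $P$ is used additionally to show $u^T\nabla_x^2(\lambda^Tq)(\xb,\ob)u=0$. Without the construction of the $\xb^k$, the Ekeland step, and the calibration of $t^k$, your argument does not close.
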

\begin{proof}We can assume without loss of generality that $\norm{u}=1$. In both cases,  the assumption
 ensures that $M_{\bar\omega}$ is metrically subregular at $(\xb,0)$ in direction
 $u$. Now we make some preliminary considerations, before proving
 the assertions of the proposition.
For every $k$ we can find a neighborhood $W^k$ of $\bar\omega$ and
a radius $\rho^k>0$ such that $(\xb+\rho^k\B_{\R^n})\times
W^k\subset\hat U_{1/k}\times \hat W_{1/k}$,
\begin{eqnarray*}
&&\sup\{\norm{\nabla_x^2 q(x,\omega)-\nabla_x^2 q(x,\bar\omega)}\mv x\in\xb+\rho^k\B_{\R^n},\omega\in W^k\}<\frac 1{9k^3},\\
&&\sup\{\norm{\nabla_x q(x,\omega)-\nabla_x q(\xb,\bar\omega)}\mv x\in\xb+\rho^k\B_{\R^n},\omega\in W^k\}<\frac 1k,\\
&&\sup\{e_2(\omega)\mv \omega\in W^k\}\leq \min\{\frac 1{16k^3},\frac{\rho^k}{16k^2}\}.
\end{eqnarray*}
 Now consider  sequences $(t^k)\downarrow 0$ and $(\omega^k)$ such that $t^k<\rho^k/2$ and $\omega^k\in W^k$ hold for all $k$. Since $\nabla_x q(\xb,\bar\omega)u\in T(q(\xb,\bar\omega);P)$ and $P$ is the union of finitely many convex polyhedral sets, we have $q(\xb,\bar\omega)+t^k\nabla_x q(\xb,\bar\omega)u\in P$ for all $k$ sufficiently large. Thus there is some constant $L$ such that $\dist{q(\xb +t^ku,\bar\omega),P}\leq L(t^k)^2$ and, by metric subregularity of $M_{\bar\omega}$ in direction $u$, there is some $\kappa'>0$ such that for every $k$ there is some point $\xb^k\in{\cal F}(\bar\omega)\cap (\xb+t^ku+\kappa'L(t^k)^2\B_{\R^n})$. For all $k$ sufficiently large we also have $t^kL\kappa'<\frac 12$, implying
\[\frac{t^k}2<\norm{\xb^k-\xb}<\frac{3t^k}2<\rho^k,\] and we obtain
\begin{eqnarray*}\dist{q(\xb^k,\omega^k),P}&\leq& \dist{q(\xb^k,\bar\omega),P}+\norm{q(\xb^k,\omega^k)-q(\xb^k,\bar\omega)}\\
&\leq& 0+\norm{q(\xb,\omega^k)-q(\xb,\bar\omega))}+
\norm{\xb^k-\xb}\norm{\nabla_x q(\xb,\omega^k)-\nabla_x q(\xb,\bar\omega)}\\
&&\qquad+\frac{\norm{\xb^k-\xb}^2}2\sup\{\norm{\nabla_x^2 q(x,\omega^k)-\nabla_x^2 q(x,\bar\omega)}\mv \norm{x-\xb}\leq\norm{\xb^k-\xb}\}\\
&\leq& \norm{q(\xb,\omega^k)-q(\xb,\bar\omega))}+
\frac 32t^k\norm{\nabla_x q(\xb,\omega^k)-\nabla_x q(\xb,\bar\omega)}+ \frac{(t^k)^2}{8k^3}=:\epsilon^k.
\end{eqnarray*}
By using Ekeland's variational principle, we can find for every $k$ some point $x^k\in \xb^k+k\epsilon^k\B_{\R^n}$ such that $\dist{q(x^k,\omega^k),P}\leq \dist{q(\xb^k,\omega^k),P}$ and
\[\dist{q(x^k,\omega^k),P}\leq \dist{q(x,\omega^k),P}+\frac 1k \norm{x- x^k} \ \forall x\in \R^n.\]
Since $P$ is closed, there is for every $k$ some $q^k\in P$ with $\dist{q(x^k,\omega^k),P}=\norm{q(x^k,\omega^k)-q^k}$, and we conclude
\[\norm{q(x^k,\omega^k)-q^k}\leq \norm{q(x,\omega^k)-y}+\frac 1k \norm{x-x^k} \ \forall (x,y)\in \R^n\times P,\]
i.e., $(x^k,q^k)$ is a solution of the optimization problem
\[\min_{x,y} \norm{q(x,\omega^k)-y}+\frac 1k \norm{x-x^k} \quad\mbox{subject to}\quad y\in P.\]
By applying first-order optimality conditions, it follows that
$\xi^k:=(q( x^k,\omega^k)-q^k)/\norm{q( x^k,\omega^k)-q^k}\in\hat N(q^k;P)$ and $\norm{\nabla_x q(x^k,\omega^k)^T\xi^k}\leq \frac 1k$, provided that $q(x^k,\omega^k)-q^k\not=0$ .

Now let us prove the first assertion by contraposition. We assume
on the contrary that for every $k$ there is some $\omega^k\in W^k$
with  $\dist{\xb,{\cal
F}(\omega^k)}>12k^2\norm{q(\xb,\omega^k)-q(\xb,\ob)}$. Note that
$\norm{q(\xb,\omega^k)-q(\xb,\ob)}=0$ or $e_2(\omega^k)=0$ implies
$\xb\in{\cal F}(\omega^k)$ and thus
$\norm{q(\xb,\omega^k)-q(\xb,\ob)}>0$. By taking
$t^k:=4k^2\norm{q(\xb,\omega^k)-q(\xb,\ob)}\leq 4k^2
e_1(\omega^k)\leq 4k^2e_2(\omega^k)\leq\min\{\frac 1{4k}, \frac
{\rho^k}4\}$  we obtain
\begin{eqnarray*}\epsilon^k&\leq&
\norm{q(\xb,\omega^k)-q(\xb,\ob)}(1+6k^2 \norm{\nabla_x
q(\xb,\omega^k)-\nabla_x q(\xb,\bar\omega)}+\frac{t^k}{2k})\\
&\leq&
\norm{q(\xb,\omega^k)-q(\xb,\ob)}(1+\frac3{8k}+\frac{t^k}{2k})\leq
2\norm{q(\xb,\omega^k)-q(\xb,\ob)}= \frac1{2k^2}t^k<
\frac{\norm{\xb^k-\xb}}{k^2}
\end{eqnarray*}
 and therefore
$\norm{x^k-\xb^k}\leq k\epsilon^k< \frac{\norm{\xb^k-\xb}}{k}$,
showing
\begin{equation}\label{EqProofProp2Eq1}
\lim_{k\to\infty}\frac{x^k-\xb}{\norm{x^k-\xb}}=\lim_{k\to\infty}\frac{\xb^k-\xb}{\norm{\xb^k-\xb}}=u,
\end{equation}
\begin{eqnarray}\nonumber(1-\frac 1k)\norm{\xb^k-\xb}&<&\norm{x^k-\xb}< (1+\frac
1k)\norm{\xb^k-\xb}\\\label{EqProofProp2Eq2} &<&\frac 32(1+\frac
1k)t^k\leq12k^2 \norm{q(\xb,\omega^k)-q(\xb,\ob)}<\dist{\xb,{\cal
F}(\omega^k)},
\end{eqnarray}
and
\begin{equation}\label{EqProofProp2Eq3}\norm{q(x^k,\omega^k)-q^k}\leq\epsilon^k<
\frac{\norm{\xb^k-\xb}}{k^2}\leq \frac{\norm{x^k-\xb}}k\ \forall
k\geq 2.
\end{equation}
 From \eqref{EqProofProp2Eq2} we can conclude
$q(x^k,\omega^k)-q^k\not=0$, since otherwise $\dist{\xb,{\cal
F}(\omega^k)}\leq\norm{\xb-x^k}$ would hold. Hence  $\xi^k$ is
well defined, and we can proceed as in the proof of Proposition
\ref{PropPropertyR} to obtain the contradiction that every limit
point $\lambda$ of the sequence $(\xi^k)$ fulfills
$\norm{\lambda}=1$, $\lambda\in N(q(\xb);P;\nabla_xq(\xb,\ob)u)$
and $\nabla_x q(\xb,\ob)^T\lambda=0$.

We prove the second part similarly. Assuming on the contrary that
for every $k$ there is some $\omega^k\in W^k$ with
$e_2(\omega^k)>0$  and $\dist{\xb,{\cal
F}(\omega^k)}/e_2(\omega^k)>24k^2$ and setting $t^k:=8k^2
e_2(\omega^k)\leq\min\{\frac1{2k},\frac {\rho^k}2\}$, we obtain
$\norm{\xb^k-\xb}<\frac 32 t^k\leq \frac 34$ and, together with
$\frac 12 t^k<\norm{\xb^k-\xb}$,
\[\epsilon^k\leq e_2(\omega^k)^2(1+12k^2+8k)=
(t^k)^2\frac{1+12k^2+8k}{64k^4}<\frac{\norm{\xb^k-\xb}^2}{k^2}\leq\frac{\norm{\xb^k-\xb}}{k^2}\
\forall k\geq 2.\] As before  we obtain $\norm{x^k-\xb^k}\leq
k\epsilon^k< \frac{\norm{\xb^k-\xb}}{k}$, and thus
\eqref{EqProofProp2Eq1} as well as
\[(1-\frac
1k)\norm{\xb^k-\xb}<\norm{x^k-\xb}< (1+\frac
1k)\norm{\xb^k-\xb}<\frac 32(1+\frac 1k)t^k\leq24k^2
e_2(\omega^k)<\dist{\xb,{\cal F}(\omega^k)}\] and
\eqref{EqProofProp2Eq3} hold. This implies again that $(\xi^k)$ is
well defined; using the same arguments as in the proof of the
second part of Proposition \ref{PropPropertyR}, we obtain that
every limit point $\lambda$ of the sequence $(\xi^k)$ fulfills
$\norm{\lambda}=1$, $\lambda\in N(q(\xb);P;\nabla_xq(\xb,\ob)u)$,
$\nabla_x q(\xb,\ob)^T\lambda=0$ and
$u^T\nabla_x^2(\lambda^Tq)(\xb,\ob)u=0$, a contradiction.
\qed\end{proof}

\section{Lipschitz and H\"older stability  of solution mappings}
In what follows we denote by $X:\Omega\rightrightarrows\R^n$ the mapping which assigns to every $\omega$ the set of local minimizers for the problem $P(\omega)$ in (\ref{EqParOptProb}),
and by $S:\Omega\rightrightarrows \R^n$ and $S_M:\Omega\rightrightarrows\R^n$
the mappings which assign to every $\omega\in\Omega$ the sets of B-stationary points and M-stationary points, respectively, for the problem $P(\omega)$, i.e.,
\[S(\omega):=\{x\in\R^n\mv 0\in \nabla_x f(x,\omega)+\hat N(x;{\cal F}(\omega))\},\]
\[S_M(\omega):=\{x\in\R^n\mv 0\in \nabla_x f(x,\omega)+\nabla_x q(x,\omega)^T N(q(x,\omega);P)\}.\]
Then we have $X(\omega)\subset S(\omega)$ $\forall\omega\in\Omega$.

Further we consider the solution mapping $\hat S:\Omega\rightrightarrows\R^n$ of the generalized equation \eqref{EqGE},
\[\hat S(\omega):=\{x\in\R^n\mv 0\in  F(x,\omega)+\hat N(x;{\cal F}(\omega))\},\]
and the related mapping
\[\hat S_M(\omega):=\{x\in\R^n\mv 0\in F(x,\omega)+\nabla_x q(x,\omega)^T N(q(x,\omega);P)\}.\]

The first result of this section are  sufficient conditions such that estimates of the form
\[(S(\omega)\cup S_M(\omega))\cap U\subset \xb+L\tau_l(\omega)\B_{\R^n}\ \forall \omega\in W\]
or
\[(\hat S(\omega)\cup \hat S_M(\omega))\cap U\subset \xb+L\hat\tau_l(\omega)\B_{\R^n}\ \forall \omega\in W\]
hold, where $L>0$ and $U$ and $W$ are neighborhoods of $\xb$ and $\ob$. 
The following result is only stated  for the solution mappings $\hat S(\omega)\cup \hat S_M(\omega)$ for $GE(\omega)$, the corresponding statement for $P(\omega)$ follows immediately by taking $F(x,\omega):=\nabla_x f(x,\omega)$.

\begin{theorem}\label{ThStab}
Let $\xb\in {\cal F}(\bar\omega)$.
\begin{enumerate}
\item If property $R_1$ holds and there does not exist a triple $(u,\lambda,\mu)\in \R^n\times\R^m\times\R^m$ satisfying
\begin{eqnarray}
&&0\not=u\in T^{\rm lin}_{\bar\omega}(\xb),\label{EqLipCond_u}\\
&&\lambda\in N(q(\xb,\bar\omega);P;\nabla_x q(\xb,\bar\omega)u)\label{EqLipCond_lambda}\\
&&\mu\in T(\lambda;N(q(\xb,\bar\omega);P;\nabla_x q(\xb,\bar\omega)u))\label{EqLipCond_mu}\\
&&F(\xb,\bar\omega)+\nabla_x q(\xb,\bar\omega)^T\lambda=0\label{EqLipCond_FOEq}\\
&&\nabla_x F(\xb,\bar\omega)u +\nabla_x^2(\lambda^Tq)(\xb,\bar\omega)u+\nabla_x q(\xb,\bar\omega)^T\mu=0,\label{EqLipCond_SOEq}
\end{eqnarray}
then there are neighborhoods $U$ of $\xb$, $W$ of $\bar\omega$ and a constant $L>0$ such that
\[(\hat S(\omega)\cup \hat S_M(\omega))\cap U\subset \xb+L\hat\tau_1(\omega)\B_{\R^n}\ \forall\omega\in W.\]

\item If property $R_2$ holds and there does not exist a quadruple $(u,\lambda,\mu,v)\in \R^n\times\R^m\times\R^m\times \R^n$ such that $(u,\lambda,\mu)$ fulfills \eqref{EqLipCond_u}-\eqref{EqLipCond_SOEq} and
\begin{eqnarray}
&&\nabla_xq(\xb,\bar\omega)v+u^T\nabla_x^2q(\xb,\bar\omega)u\in T(q(\xb,\bar\omega);\bigcup_{i\in{\cal I}(u)}P_i),\label{EqHoeldCond1}\\
&&u^T\nabla_x^2(\lambda^Tq)(\xb,\bar\omega)u=F(\xb,\bar\omega)^Tv,\label{EqHoeldCond2},\end{eqnarray}
where ${\cal I}(u):=\{i\mv \nabla_xq(\xb,\bar\omega)u\in T(q(\xb,\bar\omega);P_i)\}$,
then there are neighborhoods $U$ of $\xb$, $W$ of $\bar\omega$ and a constant $L>0$ such that
\[(\hat S(\omega)\cup \hat S_M(\omega))\cap U\subset \xb+L\hat\tau_2(\omega)\B_{\R^n}\ \forall\omega\in W.\]
\end{enumerate}
\end{theorem}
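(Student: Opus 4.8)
The plan is to argue by contradiction in both parts, following the blueprint already used in Propositions \ref{PropPropertyR} and \ref{PropFeas}. Suppose the conclusion of part (1) fails. Then for each $k$ there are $\omega^k\to\bar\omega$ and $x^k\in(\hat S(\omega^k)\cup\hat S_M(\omega^k))\cap U$ with $\|x^k-\xb\|>k\hat\tau_1(\omega^k)$. First I would dispose of the ``easy'' part: by property $R_1$, once $\|x^k-\xb\|>\gamma e_1(\omega^k)$ (which holds since $\hat\tau_1\ge e_1$ and $k$ is large), $M_{\omega^k}$ is metrically regular with uniform modulus $\kappa$ around $(x^k,0)$, hence by Corollary \ref{CorInclNormalCone} we have $\hat N(x^k;{\cal F}(\omega^k))\subset\nabla_xq(x^k,\omega^k)^TN(q(x^k,\omega^k);P)$, so in both cases $x^k$ is in fact M-stationary: there is $\lambda^k\in N(q(x^k,\omega^k);P)$ with $F(x^k,\omega^k)+\nabla_xq(x^k,\omega^k)^T\lambda^k=0$. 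Metric regularity (Mordukhovich criterion, Theorem \ref{ThCharMetrSubReg}) with uniform modulus also bounds $\|\lambda^k\|$: indeed $\nabla_xq(x^k,\omega^k)^T$ restricted to the relevant cone is bounded below, so $\|\lambda^k\|\le C\|F(x^k,\omega^k)\|\le C'$. Passing to a subsequence, $\lambda^k\to\lambda$.

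Next I would extract the direction. Set $u^k:=(x^k-\xb)/\|x^k-\xb\|$ and pass to a subsequence with $u^k\to u$, $\|u\|=1$. Exactly as in the proof of Proposition \ref{PropPropertyR}, choosing $q^k\in P$ near $q(x^k,\omega^k)$ (within $\|x^k-\xb\|/k$ for part (1), within $\|x^k-\xb\|^2/k$ for part (2)) and $\xi^k\in\hat N(q^k;P)$ near $\lambda^k$, and using Lemma \ref{LemTechn} to control $q(x^k,\omega^k)-q(x^k,\bar\omega)$ by $e_1(\omega^k)\le\hat\tau_1(\omega^k)<\|x^k-\xb\|/k$, one gets $(q^k-q(\xb,\bar\omega))/\|x^k-\xb\|\to\nabla_xq(\xb,\bar\omega)u$, so $0\ne u\in T^{\rm lin}_{\bar\omega}(\xb)$, and $\lambda\in N(q(\xb,\bar\omega);P;\nabla_xq(\xb,\bar\omega)u)$. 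The first-order equation \eqref{EqLipCond_FOEq} follows by passing to the limit in $F(x^k,\omega^k)+\nabla_xq(x^k,\omega^k)^T\lambda^k=0$ (here $\|F(x^k,\omega^k)-F(\xb,\bar\omega)\|\to0$ uses continuity and $\hat\tau_1\to0$). This produces $(u,\lambda)$ satisfying \eqref{EqLipCond_u}, \eqref{EqLipCond_lambda}, \eqref{EqLipCond_FOEq}; the missing pieces are $\mu$ and the second-order equation \eqref{EqLipCond_SOEq}.

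The heart of the matter is constructing $\mu\in T(\lambda;N(q(\xb,\bar\omega);P;\nabla_xq(\xb,\bar\omega)u))$ solving \eqref{EqLipCond_SOEq}. I would form the second-order difference quotient
\[
\mu^k:=\frac{\lambda^k-\lambda}{\|x^k-\xb\|}\quad\text{(or a suitable rescaling)}
\]
and show it is bounded, so that along a further subsequence $\mu^k\to\mu$. Boundedness should come from expanding the stationarity equation: subtracting $F(\xb,\bar\omega)+\nabla_xq(\xb,\bar\omega)^T\lambda=0$ from $F(x^k,\omega^k)+\nabla_xq(x^k,\omega^k)^T\lambda^k=0$, dividing by $\|x^k-\xb\|$, and using the twice-differentiability from Assumption \ref{AssBasic} together with Lemma \ref{LemTechn} (the terms $\|F(\xb,\omega^k)-F(\xb,\bar\omega)\|$ and $\|\nabla_xq(\xb,\omega^k)-\nabla_xq(\xb,\bar\omega)\|$ are $\le\hat\tau_1(\omega^k)<\|x^k-\xb\|/k$, hence negligible after division), one gets
\[
\nabla_xF(\xb,\bar\omega)u+\nabla_x^2(\lambda^Tq)(\xb,\bar\omega)u+\nabla_xq(\xb,\bar\omega)^T\mu^k+o(1)=0,
\]
which both forces $\nabla_xq(\xb,\bar\omega)^T\mu^k$ to converge and, via the uniform lower bound on $\nabla_xq(\xb,\bar\omega)^T$ on the appropriate subspace (again from $R_1$/Mordukhovich), bounds $\mu^k$ modulo the kernel of $\nabla_xq(\xb,\bar\omega)^T$; a standard argument lets one choose representatives so that $\mu^k$ itself is bounded. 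Passing to the limit yields \eqref{EqLipCond_SOEq}. That $\mu\in T(\lambda;N(q(\xb,\bar\omega);P;\nabla_xq(\xb,\bar\omega)u))$ should follow because $\lambda^k=\lambda+\|x^k-\xb\|\mu^k$ with $\lambda^k\in N(q^k;P)\subset\hat N(q^k;P)$ sitting at points $q^k=q(\xb,\bar\omega)+\|x^k-\xb\|(\cdots)$ along the direction $\nabla_xq(\xb,\bar\omega)u$, so by the polyhedral structure of $P$ (passing to a subsequence where the active index sets ${\cal P}(q^k)$, ${\cal A}_i(q^k)$ stabilize, as in Lemma \ref{LemTangDirNormalConeCartProduct}) the cone $N(q(\xb,\bar\omega);P;\nabla_xq(\xb,\bar\omega)u)$ is locally a fixed polyhedral cone containing all the $\lambda^k$, whence the difference quotient $\mu^k$ lies in its tangent cone at $\lambda$ and so does the limit $\mu$. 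This contradicts the hypothesis and proves part (1).

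For part (2) the structure is identical but with the H\"older scaling $\|x^k-\xb\|^2$ in place of $\|x^k-\xb\|$ wherever Lemma \ref{LemTechn} is invoked, exactly mirroring the passage from the first to the second part of Proposition \ref{PropPropertyR}: now $e_2(\omega^k)\le\hat\tau_2(\omega^k)<\|x^k-\xb\|/k$ and $\|q(\xb,\omega^k)-q(\xb,\bar\omega)\|\le e_2(\omega^k)^2<\|x^k-\xb\|^2/k^2$, so the second-order expansion of $\lambda^T(q^k-q(\xb,\bar\omega))/\|x^k-\xb\|^2$ (using the polyhedral identity $\lambda^Tq^k=\lambda^Tq(\xb,\bar\omega)$ from the stabilized active sets, as in Proposition \ref{PropPropertyR}) produces the extra relation encoded in \eqref{EqHoeldCond1}--\eqref{EqHoeldCond2}: the vector $v$ arises as the limit of a second-order correction $(x^k-\xb-\|x^k-\xb\|u)/(\tfrac12\|x^k-\xb\|^2)$ of the solution itself, \eqref{EqHoeldCond1} records that $q(\xb,\bar\omega)$ plus this correction still lies in $P_i$ for $i\in{\cal I}(u)$, and \eqref{EqHoeldCond2} comes from taking the inner product of the limiting first-order equation \eqref{EqLipCond_FOEq} with $v$ and matching second-order terms. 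I expect the main obstacle throughout to be the boundedness and tangent-cone membership of the second-order multiplier sequence $\mu^k$ --- i.e. justifying that one may pass to the limit in the linearized stationarity system without losing control of $\mu^k$ in the directions killed by $\nabla_xq(\xb,\bar\omega)^T$; this is where property $R_l$ (equivalently, the uniform Mordukhovich condition near $\xb$) is essential, and where the polyhedrality of $P$ is used to keep the relevant normal cones constant along the sequence.
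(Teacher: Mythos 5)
Your skeleton (contradiction, using property $R_l$ to force M-stationarity of $x^k$ with uniformly bounded multipliers $\lambda^k$, extracting $u$ and $\lambda$ via the points $q^k\in P$ and regular normals $\xi^k$, and passing to the limit in the stationarity equation to get \eqref{EqLipCond_FOEq}) coincides with the paper's. The genuine gap is in the construction of $\mu$. You define $\mu^k:=(\lambda^k-\lambda)/\norm{x^k-\xb}$ and claim boundedness; but the subtracted stationarity equations only show that $\nabla_x q(\xb,\bar\omega)^T\mu^k$ converges, which controls nothing in $\ker \nabla_x q(\xb,\bar\omega)^T$, and in general $\lambda^k-\lambda$ is \emph{not} $O(\norm{x^k-\xb})$. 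Your proposed fix, ``choose representatives modulo the kernel,'' is not a standard step here: any modification of $\mu^k$ by kernel elements must simultaneously preserve (i) the limiting second-order equation \eqref{EqLipCond_SOEq}, and (ii) the sign structure in the polyhedral representation of the normal cone so that $\lambda+t\mu$ stays in $N(q(\xb,\bar\omega);P;\nabla_x q(\xb,\bar\omega)u)$ for small $t>0$, which is what \eqref{EqLipCond_mu} requires. This is exactly the delicate part of the paper's argument: it represents $\xi^k=\sum_{j\in{\cal A}_i}\zeta^k_{ij}a_{ij}$ via the generalized Farkas lemma with uniformly bounded coefficients, applies Hoffman's lemma to the polyhedral system consisting of this representation \emph{together with} the exact first-order equation $F(\xb,\bar\omega)+\nabla_x q(\xb,\bar\omega)^T\bar\xi=0$ to get projections $\bar\xi^k$ at distance $O(\norm{x^k-\xb})$ from $\xi^k$, takes $\mu$ as a limit of $(\xi^k-\bar\xi^k)/\norm{x^k-\xb}$, and then performs an explicit sign correction (the $\bar\nu_{ij}$ adjustment at indices with $\zeta_{ij}=0$) to guarantee $\lambda+t\mu\in\hat N(q^k;P)$, whence the tangent-cone membership. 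Without Hoffman's lemma (or an equivalent error-bound device) your difference quotient simply need not converge, and the argument collapses at the key step you yourself flag as ``the main obstacle.''

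Part 2 has a second gap of the same nature: you propose to obtain $v$ as the limit of $(x^k-\xb-\norm{x^k-\xb}u)/(\tfrac12\norm{x^k-\xb}^2)=2(u^k-u)/\norm{x^k-\xb}$, but there is no reason whatsoever that $u^k\to u$ at rate $O(\norm{x^k-\xb})$; stationary points of the perturbed problems admit no such second-order expansion in general. The paper avoids any such quotient: it derives, for every multiplier $\xi$ representable with nonnegative coefficients on ${\cal A}_{\bar i}(q(\xb,\bar\omega))$ and satisfying the first-order equation, the inequality $0\geq \tfrac12 u^T\nabla^2_x((\xi-\lambda)^Tq)(\xb,\bar\omega)u$ (using $\lambda^T(q^k-q(\xb,\bar\omega))=0$ from the stabilized active sets and the H\"older-scaled estimates of Lemma \ref{LemTechn}), recognizes that the coefficients $\zeta_{\bar i j}$ solve a linear program, and produces $v$ as a solution of the \emph{dual} LP; conditions \eqref{EqHoeldCond1} and \eqref{EqHoeldCond2} are precisely dual feasibility and strong duality. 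So while your reduction of part 2 to ``part 1 plus one extra relation'' is directionally right, the mechanism you give for producing $v$ would fail, and an LP-duality (or comparable) argument is needed in its place.
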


\begin{proof}
We prove the first part by contraposition. Let $W$ denote the neighborhood according to the definition of property $R_1$. Assume on the contrary that for every $k$ we can find $(x^k,\omega^k)\in(\hat U_{1/k}\cap (\xb+\frac 1k\B_{\R^n}))\times (\hat W_{1/k}\cap W)$ with  $(x^k)\in \hat S_M(\omega^k)\cup \hat S(\omega^k)$
and $\norm{x^k-\xb}>k\hat\tau_1(\omega^k)$. Because of $e_1(\omega^k)\leq\hat \tau_1(\omega^k)$ it follows that $e_1(\omega^k)/\norm{x^k-\xb}\leq\hat \tau_1(\omega^k)/\norm{x^k-\xb}< \frac1{k}$, and,  due to property $R_1$, we have that $M_{\omega^k}$ is metrically regular with modulus $\kappa$ around $(x^k,0)$ for all $k$ sufficiently large. Hence, $x^k\in \hat S_M(\omega^k)$ and there exists a vector $\lambda^k\in N(q(x^k,\omega^k);P)$ with $F(x^k,\omega^k)+\nabla_x q(x^k,\omega^k)^T\lambda^k=0$. From \cite[Example 9.44]{RoWe98} we conclude $\norm{\lambda^k}\leq \kappa\norm{F(x^k,\omega^k)}$, showing that $(\lambda^k)$ is bounded. By the definition of the limiting normal cone, we can find for each $k$ elements $q^k\in (q(x^k,\omega^k)+\frac{\norm{x^k-\xb}^2}k \B_{\R^n})\cap P$ and $\xi^k\in\hat N(q^k,P)\cap (\lambda^k+\frac {\norm{x^k-\xb}}k\B_{\R^n})$. By passing to a subsequence if necessary, we can assume that $u^k:=(x^k-\xb)/\norm{x^k-\xb}\to u$ and $\lambda^k\to\lambda$.
Because of  Lemma \ref{LemTechn} we have $\norm{q(x^k,\omega^k)-q(x^k,\ob)}/\norm{x^k-\xb}\leq \hat \tau_1(\omega^k)/\norm{x^k-\xb} +1/k\to 0$ and therefore
\begin{eqnarray*}
\lim_{k\to\infty}\frac{q^k-q(\xb,\ob)}{\norm{x^k-\xb}}&=&\lim_{k\to\infty}\frac{q(x^k,\omega^k)-q(\xb,\ob)}{\norm{x^k-\xb}}=
\lim_{k\to\infty} \frac{q(x^k,\ob)-q(\xb,\ob)}{\norm{x^k-\xb}}
=\nabla_xq(\xb,\ob)u,
\end{eqnarray*}
showing $0\not=u\in T^{\rm lin}_{\bar\omega}(\xb)$ and $\lambda\in N(q(\xb,\bar\omega);P;\nabla_x q(\xb,\bar\omega)u)$. Using Lemma \ref{LemTechn} again, we similarly obtain
\begin{equation}\label{EqLimit1}\lim_{k\to\infty}\frac{\nabla_x q(x^k,\omega^k)-\nabla_x q(\xb,\ob)}{\norm{x^k-\xb}}=\nabla_x^2q(\xb,\ob)u,\quad \lim_{k\to\infty}\frac{F(x^k,\omega^k)-F(\xb,\ob)}{\norm{x^k-\xb}}=\nabla_x F(\xb,\ob)u.
\end{equation}
Further
\begin{equation}\label{EqLimit1a}
0=\lim_{k\to\infty}(F(x^k,\omega^k)+\nabla_x q(x^k,\omega^k)^T\lambda^k)=F(\xb,\bar\omega)+\nabla_x q(\xb,\bar\omega)^T\lambda.
\end{equation}
By passing to a subsequence once more, we can assume that there are
index sets ${\cal P}\subset \{1,\ldots,p\}$ and ${\cal
A}_i\subset\{1,\ldots,m_i\}$, $i\in{\cal P}$, such that ${\cal
P}(q^k)={\cal P}$ and ${\cal A}_i(q^k)={\cal A}_i$, $i\in{\cal P}$,
holds for all $k$. Further, by the generalized Farkas lemma, cf. \cite[Proposition 2.201]{BonSh00},  there are numbers $\zeta_{ij}^k\geq 0$,
$j\in{\cal A}_i$, $i\in{\cal P}$, such that
\[\xi^k-\sum_{j\in{\cal A}_i}\zeta_{ij}^ka_{ij}=0,\ i\in{\cal P},\]
and $\sum_{j\in{\cal A}_i}\zeta_{ij}^k\leq \beta_i\norm{\xi^k}$,
$i\in{\cal P}$, for some constants $\beta_i$. By passing to a
subsequence once more, we can assume that the sequences
$\zeta_{ij}^k$ converge to $\zeta_{ij}\geq0$ for each $j\in{\cal
A}_i$, $i\in{\cal P}$, and it follows that $\lambda-\sum_{j\in{\cal
A}_i}\zeta_{ij}a_{ij}=0,\ i\in{\cal P}$. Further,
$F(\xb,\bar\omega)+\nabla_x q(\xb,\bar\omega)^T\lambda=0$ and
thus, by Hoffman's lemma, there is some real $\bar\gamma>0$ such
that  for each $k$ we can find $\bar \xi^k\in\R^m$ and nonnegative
numbers $\bar\zeta_{ij}^k$, $j\in{\cal A}_i$, $i\in{\cal P}$,
satisfying
\begin{eqnarray*}
&&\norm{\bar \xi^k-\xi^k}+\sum_{i\in{\cal P}}\sum_{j\in{\cal A}_i}\vert\zeta^k_{ij}-\bar\zeta_{ij}^k\vert\leq\bar\gamma \norm{F(\xb,\bar\omega)+\nabla_x q(\xb,\bar\omega)^T\xi^k},\\
&&\bar \xi^k-\sum_{j\in{\cal A}_i}\bar\zeta_{ij}^ka_{ij}=0,\ i\in{\cal P},\\
&&F(\xb,\bar\omega)+\nabla_x q(\xb,\bar\omega)^T\bar\xi^k=0.
\end{eqnarray*}
Taking into account $F(x^k,\omega^k)+\nabla_x q(x^k,\omega^k)^T\lambda^k=0$ and
$\|\nabla_x q(\xb,\bar\omega)^T(\lambda^k-\xi^k)\| \le(\norm{\nabla_x q(\xb,\bar\omega)}\norm{x_k-\xb})/k$, we obtain
\begin{eqnarray*}\lefteqn{\limsup_{k\to\infty}\frac{\norm{F(\xb,\bar\omega)+\nabla_x q(\xb,\bar\omega)^T\xi^k}}{\norm{x^k-\xb}}}\\
&=&
\limsup_{k\to\infty}\frac{\norm{F(\xb,\bar\omega)+\nabla_x q(\xb,\bar\omega)^T\xi^k-F(x^k,\omega^k)-\nabla_x q(x^k,\omega^k)^T\lambda^k}}{\norm{x^k-\xb}}\\
&\leq&
\limsup_{k\to\infty} \frac{\norm{F(\xb,\bar\omega)-F(x^k,\omega^k)+(\nabla_x q(\xb,\bar\omega)-\nabla_x q(x^k,\omega^k))^T\lambda^k}
+ \norm{\nabla_x q(\xb,\bar\omega)^T(\lambda^k-\xi^k)}}{\norm{x^k-\xb}} \\
&=&\norm{\nabla_x F(\xb,\bar\omega)u+\nabla_x^2(\lambda^Tq)(\xb,\bar\omega)u}<\infty,
\end{eqnarray*}
where the last equation follows from \eqref{EqLimit1}.
Hence for each $j\in{\cal A}_i$, $i\in{\cal P}$, the sequence $(\zeta_{ij}^k-\bar\zeta_{ij}^k)/\norm{x^k-\xb}$ is bounded, and we can assume that it converges to some $\nu_{ij}$, where we have eventually passed to a subsequence. Now consider the set ${\cal I}:=\{(i,j)\mv  i\in{\cal P}, j\in{\cal A}_i, \zeta_{ij}=0\}$. If $\nu_{ij}\geq0$ $\forall (i,j)\in{\cal I}$ we set $\bar \nu_{ij}=\nu_{ij}$, $j\in{\cal A}_i$, $i\in{\cal P}$. Otherwise we choose an index $\bar k$ such that
$(\zeta_{ij}^{\bar k}-\bar\zeta_{ij}^{\bar k})/\norm{x^{\bar k}-\xb}<\nu_{ij}/2$ hold for all $(i,j)\in{\cal I}$ with $\nu_{ij}<0$ and set $\bar \nu_{ij}=\nu_{ij}+2(\bar\zeta_{ij}^{\bar k}-\zeta_{ij})/\norm{x^{\bar k}-\xb}$, $j\in{\cal A}_i$, $i\in{\cal P}$. Then we have for every $(i,j)\in{\cal I}$,
\[\bar \nu_{ij}=\nu_{ij}+2\bar\zeta_{ij}^{\bar k}/\norm{x^{\bar k}-\xb}\geq \nu_{ij}+2(\bar\zeta_{ij}^{\bar k}-\zeta_{ij}^{\bar k})/\norm{x^{\bar k}-\xb}>0,\]
and hence in any case there is some $\bar t>0$ such that $\zeta_{ij}+t\bar\nu_{ij}\geq0$ holds for all $j\in{\cal A}_i$, $i\in{\cal P}$ and $t\in[0,\bar t]$.
Setting $\mu:=\sum_{j\in{\cal A}_i}\bar\nu_{ij}a_{ij}$ for an arbitrarily chosen $i\in {\cal P}$, we either have $\mu=\lim_{k\to\infty}\frac{\xi^k-\bar\xi^k}{\norm{x^k-\xb}}$ or
$\mu=\lim_{k\to\infty}\frac{\xi^k-\bar\xi^k}{\norm{x^k-\xb}}+2\frac{\bar\xi^{\bar k}-\lambda}{\norm{x^{\bar k}-\xb}}$, and therefore $\mu=\sum_{j\in{\cal A}_i}\bar\nu_{ij}a_{ij}$ holds for all $i\in{\cal P}$. Hence $\lambda+t\mu\in\hat N(q^k;P)$ $\forall k$ and $\lambda+t\mu\in N(q(\xb,\bar\omega);P;\nabla_x q(\xb,\bar\omega)u)$, and $\mu\in T(\lambda;N(q(\xb,\bar\omega);P;\nabla_x q(\xb,\bar\omega)u))$ follows. Taking into account $\nabla_x q(\xb,\bar\omega)^T\lambda=\nabla_x q(\xb,\bar\omega)^T\bar\xi^{\bar k}=-F(\xb,\bar\omega)$, we obtain
\begin{eqnarray*}
0&=&\lim_{k\to\infty}\frac{F(x^k,\omega^k)+\nabla_x q(x^k,\omega^k)^T\lambda^k}{\norm{x^k-\xb}}=\lim_{k\to\infty}\frac{F(x^k,\omega^k)+\nabla_x q(x^k,\omega^k)^T\xi^k}{\norm{x^k-\xb}}\\
&=&\lim_{k\to\infty}\left(\frac{F(x^k,\omega^k)+\nabla_x q(x^k,\omega^k)^T\bar\xi^k}{\norm{x^k-\xb}}+\nabla_x q(x^k,\omega^k)^T\frac{\xi^k-\bar\xi^k}{\norm{x^k-\xb}}\right)\\
&=&\nabla_x q(\xb,\bar\omega)^T\mu+\lim_{k\to\infty}\frac{F(x^k,\omega^k)-F(\xb,\bar\omega)+(\nabla_x q(x^k,\omega^k)-\nabla_x q(\xb,\bar\omega))^T\bar\xi^k}{\norm{x^k-\xb}}\\
&=&\nabla_x q(\xb,\bar\omega)^T\mu+\nabla_x F(\xb,\bar\omega)u+\nabla_x^2(\lambda^Tq)(\xb,\bar\omega)u.
\end{eqnarray*}
Thus the triple $(u,\lambda,\mu)$ fulfills conditions \eqref{EqLipCond_u}-\eqref{EqLipCond_SOEq}, a contradiction, and the first part is proved.

We also prove the second  assertion by contraposition. Let $W$ now denote the neighborhood according to the definition of property $R_2$. Assume on the contrary that for every $k$ we can find $(x^k,\omega^k)\in(\hat U_{1/k}\cap (\xb+\frac 1k\B_{\R^n}))\times (\hat W_{1/k}\cap W)$ with  $(x^k)\in \hat S_M(\omega^k)\cup \hat S(\omega^k)$
and $\norm{x^k-\xb}>k\hat\tau_2(\omega^k)$. Then $\hat \tau_2(\omega^k)\leq 1/k^2$ and consequently $\hat\tau_1(\omega^k)/\norm{x^k-\xb}\leq \hat\tau_2(\omega^k)/\norm{x^k-\xb}\leq 1/k$ for all $k$,
and we can proceed as in the first part to find $(u,\lambda,\mu)$. Thus, in order to prove the second assertion, it remains to show that there is some $v$ such that \eqref{EqHoeldCond1}-\eqref{EqHoeldCond2} holds. Since ${\cal P}\subset{\cal P}(q(\xb,\bar\omega))$ and ${\cal A}_i\subset{\cal A}_i(q(\xb,\bar\omega))$, $i\in{\cal P}$, we have
$\lambda^T(q^k-q(\xb,\bar\omega))=(\sum_{j\in{\cal A}_i}\zeta_{ij}a_{ij})^T(q^k-q(\xb,\ob))=\sum_{j\in{\cal A}_i}\zeta_{ij}(b_{ij}-b_{ij})=0$ for all $k$. Now fix any $\bar i\in{\cal P}$ and consider an arbitrarily fixed vector $\xi\in\R^m$ such that $F(\xb,\bar\omega)+\nabla_x q(\xb,\bar\omega)^T\xi=0$ and there exist nonnegative numbers $\tau_{\bar ij}\geq 0$, $j\in{\cal A}_{\bar i}(q(\xb,\bar\omega))$ with $\xi=\sum_{j\in{\cal A}_{\bar i}(q(\xb,\bar\omega))}\tau_{\bar ij}a_{\bar ij}$.
Since $q^k\in P_{\bar i}$ we have $a_{\bar ij}^Tq^k\leq b_{\bar ij}=a_{\bar ij}^Tq(\xb,\bar\omega)$, $j\in{\cal A}_{\bar i}(q(\xb,\bar\omega))$ and therefore $\xi^T(q^k-q(\xb,\bar\omega))\leq 0$. Hence,
\[
\begin{array}{rcl}
0 &  \geq & (\xi-\lambda)^T(q^k-q(\xb,\bar\omega))\\
& = & (\xi-\lambda)^T \left( q^k-q(x^k,\omega^k) + q(x^k,\omega^k)-q(x^k,\bar\omega)\right.\\
& & \qquad \qquad \quad + \left. \nabla_x q(\xb,\bar\omega)^T (x^k-\xb)+ \frac 12(x^k-\xb)^T\nabla_x^2q(\xb,\bar\omega)(x^k-\xb)\right)+r^k,
\end{array}
\]
where $r^k:=(\xi-\lambda)^T\left(q(x^k,\bar\omega)-q(\xb,\bar\omega)-\nabla_x q(\xb,\bar\omega)(x^k-\xb)- \frac 12(x^k-\xb)^T\nabla_x^2q(\xb,\bar\omega)(x^k-\xb)\right)$. By Lemma \ref{LemTechn} we have $(q(x^k,\omega^k)-q(x^k,\bar\omega))/\norm{x^k-\xb}^2\to 0$. Since $(\xi-\lambda)^T\nabla_xq(\xb,\bar\omega)=0$, $(q^k-q(x^k,\omega^k))/\norm{x^k-\xb}^2\to 0$ and $r^k/\norm{x^k-\xb}^2\to 0$, by dividing by $\norm{x^k-\xb}^2$ and taking the limit $k\to\infty$,  we obtain
\[0\geq \frac 12u^T\nabla_x^2((\xi-\lambda)^Tq)(\xb,\bar\omega)u.\]
Setting $\zeta_{\bar ij}=0$, $j\in{\cal A}_{\bar i}(q(\xb,\bar\omega))\setminus {\cal A}_{\bar i}$, we obtain that $\zeta_{\bar ij}$, $j\in {\cal A}_{\bar i}(q(\xb,\bar\omega))$, is a solution of the linear optimization problem
\begin{eqnarray*}
\min \sum_{j\in{\cal A}_{\bar i}(q(\xb,\bar\omega))}-\tau_{\bar ij}a_{\bar ij}^T(u^T\nabla_x^2q(\xb,\bar\omega)u)&&\\
\mbox{subject to } \sum_{j\in {\cal A}_{\bar i}(q(\xb,\bar\omega))}\tau_{\bar ij}\nabla_xq(\xb,\bar\omega)^Ta_{\bar ij}&=&-F(\xb,\bar\omega)\\
\tau_{\bar ij}&\geq& 0,\ j\in{\cal A}_{\bar i}(q(\xb,\bar\omega)).
\end{eqnarray*}
Then, by duality theory of linear optimization, the dual program also has a solution $v\in\R^n$ and
\begin{eqnarray*}
-F(\xb,\bar\omega)^T v&=&-u^T\nabla_x^2(\lambda^Tq)(\xb,\bar\omega)u\\
a_{\bar ij}^T\nabla_x q(\xb,\bar\omega)v&\leq& -a_{\bar ij}^T(u^T\nabla_x^2 q(\xb,\bar\omega)u),\ j\in{\cal A}_{\bar i}(q(\xb,\bar\omega)).
\end{eqnarray*}
Hence $\nabla_x q(\xb,\bar\omega)v+u^T\nabla_x^2 q(\xb,\bar\omega)u\in T(q(\xb,\bar\omega),P_{\bar i})$, and since $\bar i\in{\cal P}\subset {\cal I}(u)$, the quadruple $(u,\lambda,\mu, v)$ fulfills 
\eqref{EqLipCond_u}-\eqref{EqHoeldCond2}, a contradiction, and the second part is also proved.
\qed\end{proof}

Now we consider the solution mappings for the the problem \eqref{EqParOptProb}. It turns out that the assumptions of Theorem \ref{ThStab} can be partially replaced by a second-order sufficient condition and a quadratic growth condition, respectively. Further we can guarantee the existence of locally optimal solutions.

Recall that a point $\xb \in \cal F(\bar\omega)$ is an {\em essential local minimizer of second order} for the problem $P(\bar\omega)$ if there is a neighborhood $U$ of $\xb$ and a constant $\eta>0$ such that
\begin{equation}\label{EqEssLocMin}\max\{f(x,\bar\omega)-f(\xb,\bar\omega),\ \dist{q(x,\bar\omega),P}\}\geq \eta\norm{x-\xb}^2\ \forall x\in U.\end{equation}
This implies that the {\em quadratic growth condition}  for the problem $P(\bar\omega)$ holds at $\xb$, i.e.,
$f(x,\bar\omega)-f(\xb,\bar\omega) \geq \eta\norm{x-\xb}^2\ \forall x\in U \cap {\cal F}(\bar\omega)$.
The opposite direction is true if $M_{\bar\omega}$ is metrically subregular at $(\xb,\bar\omega)$.
To see this one could use similar arguments as in \cite[Section 3]{Gfr06} by noting that convexity of $P$
is not needed  and the assumption of metric regularity used in \cite{Gfr06} can be replaced by assuming metric subregularity.

Note that the proof of the following proposition does not use the polyhedral form of $P$, it suffices to suppose that $P$ is a nonempty closed set.
\begin{proposition}\label{PropExistLocMin}
Let $\xb$ be an essential local minimizer of second order for the problem $P(\bar\omega)$. Then there are constants $\gamma_1,\gamma_2>0$ and a neighborhood $W$ of $\ob$ such that for all $\omega\in W$ with
$\dist{\xb,{\cal F}(\omega)}~<~\gamma_1$ one has
\[\dist{\xb,X(\omega)}\leq \gamma_2\left(\dist{\xb,{\cal F}(\omega)}^{\frac 12}+\tau_2(\omega)\right)\]
\end{proposition}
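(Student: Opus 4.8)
The plan is to combine the essential second-order minimality at $\bar\omega$ with a quantitative stability argument in the spirit of Ekeland's variational principle. The starting observation is that the defining inequality \eqref{EqEssLocMin} persists, in a slightly weakened form, for nearby parameters: using Assumption \ref{AssBasic} together with Lemma \ref{LemTechn}, one shows that there are $\eta'>0$, a radius $r>0$ and a neighborhood $W_0$ of $\ob$ such that
\[
\max\{f(x,\omega)-f(\xb,\omega),\ \dist{q(x,\omega),P}\}\geq \eta'\norm{x-\xb}^2-c\,\tau_2(\omega)\norm{x-\xb}-c\,\tau_2(\omega)^2
\]
for all $x\in\xb+r\B_{\R^n}$ and $\omega\in W_0$, where $c$ is a constant; this follows from bounding $|f(x,\omega)-f(\xb,\omega)-(f(x,\bar\omega)-f(\xb,\bar\omega))|$ and $|\dist{q(x,\omega),P}-\dist{q(x,\bar\omega),P}|$ by terms of order $e_2(\omega)^2+e_2(\omega)\norm{x-\xb}$ and $\norm{\nabla_xf(\xb,\omega)-\nabla_xf(\xb,\ob)}\,\norm{x-\xb}$, i.e.\ by $\tau_2(\omega)$-type quantities plus an arbitrarily small multiple of $\norm{x-\xb}^2$.

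Next I would produce a feasible point close to $\xb$. Set $\delta(\omega):=\dist{\xb,{\cal F}(\omega)}$; when $\delta(\omega)<\gamma_1$ we may pick $x_0(\omega)\in{\cal F}(\omega)$ with $\norm{x_0(\omega)-\xb}=\delta(\omega)$. Restricting the problem $P(\omega)$ to the ball $\xb+r\B_{\R^n}$, which is compact, there is a global minimizer $\tilde x(\omega)$ of $f(\cdot,\omega)$ over ${\cal F}(\omega)\cap(\xb+r\B_{\R^n})$, and by comparison with $x_0(\omega)$ we get $f(\tilde x(\omega),\omega)\le f(x_0(\omega),\omega)\le f(\xb,\omega)+L_f\delta(\omega)$ for a Lipschitz-type constant $L_f$. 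Feeding $x=\tilde x(\omega)$ into the weakened growth inequality above (note $\dist{q(\tilde x(\omega),\omega),P}=0$) yields
\[
\eta'\norm{\tilde x(\omega)-\xb}^2 - c\,\tau_2(\omega)\norm{\tilde x(\omega)-\xb} - c\,\tau_2(\omega)^2 \ \le\ L_f\,\delta(\omega),
\]
and solving this quadratic inequality for $\norm{\tilde x(\omega)-\xb}$ gives a bound of the form $\norm{\tilde x(\omega)-\xb}\le \gamma_2(\delta(\omega)^{1/2}+\tau_2(\omega))$. One then has to argue that $\tilde x(\omega)$ is in fact a \emph{local} minimizer of $P(\omega)$, not merely a minimizer over the truncated ball: this holds because, for $\omega$ close enough to $\ob$ and $\delta(\omega)$ small, the bound just obtained forces $\tilde x(\omega)$ into the interior of $\xb+r\B_{\R^n}$, so the ball constraint is inactive. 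Hence $\tilde x(\omega)\in X(\omega)$ and $\dist{\xb,X(\omega)}\le\norm{\tilde x(\omega)-\xb}$, which is the claimed estimate.

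The main obstacle I anticipate is the passage from the truncated minimizer to a genuine local minimizer together with getting the constants uniform: one must choose $\gamma_1$ (controlling $\delta(\omega)$) and $W$ (controlling $\tau_2(\omega)$ and the smallness of the perturbation terms in Lemma \ref{LemTechn}) \emph{first}, so that the resulting quadratic estimate already guarantees $\norm{\tilde x(\omega)-\xb}<r$; this is a fixed-point-flavoured bookkeeping issue where the radius $r$ used to truncate and the radius appearing in the conclusion must be reconciled. A secondary point is that \eqref{EqEssLocMin} controls only the maximum of the objective gap and the constraint residual, so the weakening step must be done carefully to keep the $\eta'\norm{x-\xb}^2$ term with a strictly positive coefficient after absorbing the $O(\epsilon\norm{x-\xb}^2)$ errors from Lemma \ref{LemTechn}; choosing $\epsilon<\eta/2$ there suffices. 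Note that, as remarked before the statement, no use of the polyhedral structure of $P$ is needed — only closedness — so Lemma \ref{LemTechn} and compactness of the truncating ball are the only tools required.
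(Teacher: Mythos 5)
Your proposal is correct and follows essentially the same route as the paper's proof: perturb the essential second-order growth condition \eqref{EqEssLocMin} by the estimates of Lemma \ref{LemTechn} (absorbing the $O(\epsilon\norm{x-\xb}^2)$ error with $\epsilon<\eta$), use the nearest feasible point to bound the optimal value of the problem truncated to a ball around $\xb$, obtain a global minimizer of the truncated problem by compactness, and conclude via an interiority argument that it is a local minimizer of $P(\omega)$ at distance $O(\dist{\xb,{\cal F}(\omega)}^{1/2}+\tau_2(\omega))$ from $\xb$. The only cosmetic difference is bookkeeping: the paper shows $f(x,\omega)>f(y,\omega)$ for all feasible $x$ in an annulus and thereby confines the truncated minimizer to the small ball, whereas you plug the truncated minimizer directly into the perturbed growth inequality and solve the resulting quadratic inequality — the underlying estimates and the order of fixing $r$, $\gamma_2$, then $\gamma_1$ and $W$ are the same.
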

\begin{proof}
Let $\eta>0$ and $\rho>0$ be such that \eqref{EqEssLocMin} hold for all $x\in\xb+\rho\B_{\R^n}$, and define $L:=\sup\{\norm{\nabla_xf(x,\ob)}\mv x\in \xb+\rho\B_{\R^n}\}+1$, $\gamma_2:=\max\{\frac {1+\sqrt{2\eta+1}}\eta, 2\sqrt{L/\eta}\}$. Then we choose $0<\bar \rho\leq\rho$ and $W\subset \hat W_{\eta/2}$ such that $\xb+\bar\rho\B_{\R^n}\subset \hat U_{\eta/2}$,
\[\norm{\nabla_x^2f(x,\omega)-\nabla_x^2f(\xb,\ob)}\leq\frac\eta4\ \forall(x,\omega)\in(\xb+\bar\rho\R_{\B^n})\times W,\]
and finally, that $\tau_2(\omega)< \min\{L/2, \bar\rho/(2\gamma_2)\}$ $\forall \omega\in W$. Further, we set 
\[\gamma_1:=\min\{2L/\eta,\bar\rho^2/(4\gamma_2^2)\}\leq \min\{2L/\eta,\bar\rho^2/(16 L/\eta)\}\leq\bar\rho.\]
Now let $\omega\in W$ with $\dist{\xb,{\cal F}(\omega)}<\gamma_1$ be arbitrarily fixed, and choose $y\in{\cal F}(\omega)$ with $\norm{y-\xb}=\dist{\xb,{\cal F}(\omega)}$. For $x\in\xb+\bar\rho\B_{\R^n}$ we obtain
\begin{eqnarray*}
&&\vert f(x,\omega)-f(x,\ob)-(f(y,\omega)-f(y,\ob))- (\nabla_x f(\xb,\omega)-\nabla_x f(\xb,\ob))(x-y)\vert\\
&&\leq\frac{\eta}4(\norm{x-\xb}^2+\norm{y-\xb}^2)
\end{eqnarray*}
and, together with Lemma \ref{LemTechn},
\begin{eqnarray*}
&&\vert f(x,\omega)-f(x,\ob)-(f(y,\omega)-f(\xb,\ob))\vert+\norm{q(x,\omega)-q(x,\ob)}\\
&&\leq e_2(\omega)^2+\tau_2(\omega)(\norm{x-\xb}+\norm{y-\xb})+\frac\eta2\norm{x-\xb}^2+\frac{\eta}4\norm{y-\xb}^2+\vert f(y,\ob)-f(\xb,\ob)\vert\\
&&\leq \tau_2(\omega)^2+\tau_2(\omega)\norm{x-\xb} + \frac\eta2\norm{x-\xb}^2 +(\tau_2(\omega)+\frac \eta4 \norm{y-\xb}+L)\norm{y-\xb}\\
&&\leq \tau_2(\omega)^2+\tau_2(\omega)\norm{x-\xb} + \frac\eta2\norm{x-\xb}^2 +2L\norm{y-\xb},
\end{eqnarray*}
implying
\begin{eqnarray*}
\alpha(x,y,\omega)&:=&\max\{f(x,\omega)-f(y,\omega), \dist{q(x,\omega),P}\}\\
&\geq& \max\{f(x,\ob)-f(\xb,\ob), \dist{q(x,\ob),P}\}\\
&&\qquad-\vert f(x,\omega)-f(x,\ob)-(f(y,\omega)-f(\xb,\ob))\vert-\norm{q(x,\omega)-q(x,\ob)}\\
&\geq& \frac\eta2\norm{x-\xb}^2-\tau_2(\omega)^2-\tau_2(\omega)\norm{x-\xb} - 2L\norm{y-\xb}.
\end{eqnarray*}
Now assume $\norm{x-\xb}>\gamma_2(\norm{y-\xb}^{\frac12}+\tau_2(\omega))$. Then
\[
\norm{x-\xb}>2\sqrt{\frac L\eta}\norm{y-\xb}^{\frac12}+\frac {1+\sqrt{2\eta+1}}{\eta}\tau_2(\omega),
\]
and therefore
\begin{eqnarray*}
\frac{\eta}2\left(\norm{x-\xb}-\frac{\tau_2(\omega)}{\eta}\right)^2&>&\frac{\eta}2\left(2\sqrt{\frac L\eta}(\norm{y-\xb}^{\frac12}+\frac {\sqrt{2\eta+1}}{\eta}\tau_2(\omega)\right)^2\\
&\geq&2L\norm{y-\xb}+\frac{2\eta+1}{2\eta}\tau_2(\omega)^2,
\end{eqnarray*}
showing $\alpha(x,y,\omega)>0$. Hence we conclude that for every $x\in{\cal F}(\omega)$ with $\gamma_2(\norm{y-\xb}^{\frac12}+\tau_2(\omega))<\norm{x-\xb}\leq\bar \rho$ we have $f(x,\omega)>f(y,\omega)$, showing, together with $\gamma_2(\norm{y-\xb}^{\frac12}+\tau_2(\omega))<\bar \rho$, that there is a global solution $\xb_\omega$ of the problem
\[\min\ f(x,\omega)\ \mbox{subject to}\ q(x,\omega)\in P,\ x\in\xb+\bar\rho B_{\R^n},\]
with $\xb_\omega\in X(\omega)$ and $\norm{\xb_\omega-\xb}\leq \gamma_2(\norm{y-\xb}^{\frac12}+\tau_2(\omega))<\bar\rho$.
\qed\end{proof}
Given $x\in{\cal F}(\omega)$, we denote by ${\cal C}_{\omega}(x):=\{u\in T_{\omega}^{\rm lin}(x)\mv \nabla_x f(x,\omega)u\leq0\}$ the cone of critical directions at $x$. Further we introduce the Lagrangian ${\cal L}:\R^n\times\R^m\times\Omega\to\R$,
\begin{equation}\label{EqLagr}{\cal L}(x,\lambda,\omega):=f(x,\omega)+\lambda^Tq(x,\omega),
\end{equation}
and for every $\omega\in\Omega$, every $x\in{\cal F}(\omega)$ and every $u\in{\cal C}_\omega(x)$ we define the set of multipliers
\[\Lambda^1_\omega(x;u):=\{\lambda\in N(q(x,\omega);P;\nabla_xq(x,\omega)u)\mv \nabla_x{\cal L}(x,\lambda,\omega)=0\}\]
\begin{definition}Let $\xb\in{\cal F}(\bar\omega)$. We say that the {\em refined strong second-order sufficient condition} (RSSOSC) holds at $\xb$ for the problem $P(\bar\omega)$ if for every nonzero critical direction $0\not=u\in{\cal C}_{\bar\omega}(\xb)$   one has
\[u^T\nabla_x^2 {\cal L}(\xb,\lambda,\bar\omega)u>0\ \ \forall\lambda\in \Lambda^1_{\bar \omega}(\xb;u).\]
\end{definition}
Note that RSSOSC is sufficient for $\xb$ being an essential local minimizer of second order only under some additional first-order optimality condition, e.g., if $\xb$ is an extended M-stationary point in the sense of \cite{Gfr14a}, i.e. $\Lambda^1_{\bar\omega}(\xb;u)\not=\emptyset$ $\forall 0\not=u\in{\cal C}_{\bar\omega}(\xb)$.

Recall that the multifunction $M_\omega$ is defined by $M_\omega(x) = q(x,\omega) -P$.
In what follows, the first- and second order sufficient conditions for metric subregularity FOSCMS and SOSCMS are used as in Proposition \ref{PropPropertyR}.

\begin{theorem}\label{ThStabLocMin}
Let $\xb\in {\cal F}(\bar\omega)$, and assume that Assumption \ref{AssBasic} is fulfilled.
\begin{enumerate}
\item If FOSCMS is fulfilled for $M_{\ob}$ at $\xb$ and
RSSOSC holds at $\xb$ for $P(\bar\omega)$, then there are neighborhoods $U$ of $\xb$, $W$ of $\bar\omega$ and a constant $L>0$ such that
\begin{equation}\label{EqUppLischOpt}(S_M(\omega)\cup S(\omega))\cap U\subset \xb+L\tau_1(\omega)\B_{\R^n}\ \forall\omega\in W.\end{equation}
In addition, if $\xb$ is an essential local minimizer for $P(\ob)$ and either $T^{\rm lin}_{\ob}(\xb)\not=\{0\}$ or $M_{\ob}$ is metrically regular around $(\xb,0)$, then $X(\omega)\cap U\not=\emptyset$ $\forall \omega\in W$.
\item If SOSCMS is fulfilled for $M_{\ob}$ at $\xb$ and $\xb$ is an essential local minimizer of second order for $P(\bar\omega)$,
then there are neighborhoods $U$ of $\xb$, $W$ of $\bar\omega$ and a constant $L>0$ such that
\begin{equation}\label{EqUppHoeldOpt}(S_M(\omega)\cup S(\omega))\cap U\subset\xb+L\tau_2(\omega)\B_{\R^n}\ \forall\omega\in W.\end{equation}
In addition, if either $T^{\rm lin}_{\ob}(\xb)\not=\{0\}$  or $M_{\ob}$ is metrically regular around $(\xb,0)$, then $X(\omega)\cap U\not=\emptyset$ $\forall \omega\in W$.
\end{enumerate}
\end{theorem}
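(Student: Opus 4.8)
The strategy is to deduce Theorem \ref{ThStabLocMin} from Theorem \ref{ThStab} and Proposition \ref{PropExistLocMin}, by verifying that the hypotheses of those results follow from the stronger assumptions made here. For part 1, I would first check that FOSCMS implies property $R_1$ (this is exactly Proposition \ref{PropPropertyR}(1)). Then I must show that RSSOSC together with FOSCMS at $\xb$ rules out the existence of a triple $(u,\lambda,\mu)$ satisfying \eqref{EqLipCond_u}--\eqref{EqLipCond_SOEq}. Suppose such a triple exists. From \eqref{EqLipCond_u}, \eqref{EqLipCond_lambda} and \eqref{EqLipCond_FOEq} with $F=\nabla_xf$ we get $0\neq u\in T^{\rm lin}_{\ob}(\xb)$, $\lambda\in\Lambda^1_{\ob}(\xb;u')$ for the relevant cone — but first one has to argue $u\in{\cal C}_{\ob}(\xb)$, i.e. $\nabla_xf(\xb,\ob)u\le 0$. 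This should come from pairing \eqref{EqLipCond_FOEq} with $u$: $\nabla_xf(\xb,\ob)u = -\lambda^T\nabla_xq(\xb,\ob)u \le 0$ since $\lambda\in N(q(\xb,\ob);P;\nabla_xq(\xb,\ob)u)\subset\hat N(\cdot)$ is polar to the tangent cone and $\nabla_xq(\xb,\ob)u\in T(q(\xb,\ob);P)$. Hence $u\in{\cal C}_{\ob}(\xb)$ and $\lambda\in\Lambda^1_{\ob}(\xb;u)$, so RSSOSC gives $u^T\nabla_x^2{\cal L}(\xb,\lambda,\ob)u>0$. The contradiction must be extracted from \eqref{EqLipCond_mu} and \eqref{EqLipCond_SOEq}: pairing \eqref{EqLipCond_SOEq} with $u$ yields $u^T\nabla_x^2{\cal L}(\xb,\lambda,\ob)u + u^T\nabla_xq(\xb,\ob)^T\mu = 0$, so it suffices to show $u^T\nabla_xq(\xb,\ob)^T\mu\ge 0$, i.e. $(\nabla_xq(\xb,\ob)u)^T\mu\ge 0$. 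This is where I expect the main technical obstacle to lie: one needs that $\mu\in T(\lambda;N(q(\xb,\ob);P;\nabla_xq(\xb,\ob)u))$ is ``$\le 0$'' against the direction $\nabla_xq(\xb,\ob)u$. Since $\lambda$ itself lies in a normal cone polar to a cone containing $\nabla_xq(\xb,\ob)u$, the tangent cone to that normal cone at $\lambda$ should again be contained in the polar of the face generated by $\nabla_xq(\xb,\ob)u$; a careful argument using the polyhedral structure (the normal cone is a union of polyhedral cones $\{\sum_{j\in{\cal A}_i}\mu_{ij}a_{ij}\}$, its tangent cone at $\lambda$ is likewise polyhedral, and every generator $a_{ij}$ active at $\lambda$ satisfies $a_{ij}^T(\nabla_xq(\xb,\ob)u)\le 0$) should give $(\nabla_xq(\xb,\ob)u)^T\mu\le 0$, and in fact $=0$ because $\lambda^T(\nabla_xq(\xb,\ob)u)=0$ forces the active $a_{ij}$ to be orthogonal to $\nabla_xq(\xb,\ob)u$. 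Either way one gets $u^T\nabla_x^2{\cal L}(\xb,\lambda,\ob)u\le 0$, the desired contradiction. Thus Theorem \ref{ThStab}(1) applies and yields \eqref{EqUppLischOpt}.

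For the existence statement in part 1, I would invoke Proposition \ref{PropExistLocMin}: since $\xb$ is an essential local minimizer of second order, there are $\gamma_1,\gamma_2>0$ and a neighborhood $W'$ of $\ob$ with $\dist{\xb,X(\omega)}\le \gamma_2(\dist{\xb,{\cal F}(\omega)}^{1/2}+\tau_2(\omega))$ whenever $\dist{\xb,{\cal F}(\omega)}<\gamma_1$. It remains to bound $\dist{\xb,{\cal F}(\omega)}$. If $M_{\ob}$ is metrically regular around $(\xb,0)$, Theorem \ref{ThMetrRegulPerturb} gives $\dist{\xb,{\cal F}(\omega)}\le\kappa e_1(\omega)$ for $\omega$ near $\ob$; if instead $T^{\rm lin}_{\ob}(\xb)\neq\{0\}$, pick $0\neq u\in T^{\rm lin}_{\ob}(\xb)$ and apply Proposition \ref{PropFeas}(1) (whose hypothesis is implied by FOSCMS evaluated at that $u$) to get $\dist{\xb,{\cal F}(\omega)}\le\kappa e_1(\omega)$ as well. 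In either case $\dist{\xb,{\cal F}(\omega)}^{1/2}=O(e_1(\omega)^{1/2})$; shrinking $W$ so that this is $<\gamma_1$ and so that the resulting bound $\gamma_2(\kappa e_1(\omega))^{1/2}+\gamma_2\tau_2(\omega)$ keeps $X(\omega)$ inside $U$, we conclude $X(\omega)\cap U\neq\emptyset$. (One should note $X(\omega)\subset S(\omega)$, so these minimizers automatically lie in the set already controlled by \eqref{EqUppLischOpt}.)

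Part 2 is parallel but easier on the stationarity side and requires Proposition \ref{PropExistLocMin} in an essential way. SOSCMS implies property $R_2$ by Proposition \ref{PropPropertyR}(2). Now I must rule out a quadruple $(u,\lambda,\mu,v)$ satisfying \eqref{EqLipCond_u}--\eqref{EqHoeldCond2}. I would argue that the quadratic-growth (essential local minimizer of second order) hypothesis makes this impossible: as in part 1, $(u,\lambda,\mu)$ with $F=\nabla_xf$ forces $u\in{\cal C}_{\ob}(\xb)$ and gives a handle on $u^T\nabla_x^2{\cal L}(\xb,\lambda,\ob)u$; combining \eqref{EqHoeldCond1}--\eqref{EqHoeldCond2} with the essential-local-minimizer inequality \eqref{EqEssLocMin} along the parabolic arc $x(t)=\xb+tu+\tfrac12t^2v$ — which by \eqref{EqHoeldCond1} is feasible for $P(\ob)$ up to $o(t^2)$ and on which $f(x(t),\ob)-f(\xb,\ob)=t\nabla_xf(\xb,\ob)u+\tfrac12t^2(\nabla_xf(\xb,\ob)v+u^T\nabla_x^2f(\xb,\ob)u)+o(t^2)$ — should contradict $\max\{f(x(t),\ob)-f(\xb,\ob),\dist{q(x(t),\ob),P}\}\ge\eta t^2\|u+\tfrac12tv\|^2$ once \eqref{EqHoeldCond2} is used to cancel the second-order term against $F(\xb,\ob)^Tv=\nabla_xf(\xb,\ob)v$. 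I expect the bookkeeping of this parabolic second-order expansion, together with keeping track of which polyhedron $P_i$ the arc enters (the role of ${\cal I}(u)$ and \eqref{EqHoeldCond1}), to be the delicate point. With the quadruple excluded, Theorem \ref{ThStab}(2) yields \eqref{EqUppHoeldOpt}. Finally, the existence claim follows from Proposition \ref{PropExistLocMin} exactly as in part 1, this time using Proposition \ref{PropFeas}(2) (whose hypothesis is implied by SOSCMS at a nonzero $u\in T^{\rm lin}_{\ob}(\xb)$) or Theorem \ref{ThMetrRegulPerturb} to obtain $\dist{\xb,{\cal F}(\omega)}\le\kappa e_2(\omega)$, so that $\dist{\xb,X(\omega)}=O(e_2(\omega)^{1/2}+\tau_2(\omega))$ and $X(\omega)\cap U\neq\emptyset$ for $\omega$ near $\ob$.
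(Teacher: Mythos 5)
Your reduction is the same as the paper's: property $R_1$/$R_2$ via Proposition \ref{PropPropertyR}, exclusion of the triple/quadruple to invoke Theorem \ref{ThStab} with $F=\nabla_xf$, and the existence claims via Proposition \ref{PropFeas} or Theorem \ref{ThMetrRegulPerturb} combined with Proposition \ref{PropExistLocMin}. For part 1 the paper argues exactly as you do, except that the key orthogonality is obtained cleanly from \cite[Lemma 2.1]{Gfr14a}: since $P$ is a union of polyhedra, every element of $N(q(\xb,\ob);P;\nabla_xq(\xb,\ob)u)$ is orthogonal to $\nabla_xq(\xb,\ob)u$, and applying this to $\lambda+\alpha^k\mu^k$ gives $\lambda^T\nabla_xq(\xb,\ob)u=\mu^T\nabla_xq(\xb,\ob)u=0$, hence $u\in{\cal C}_{\ob}(\xb)$ and $u^T\nabla^2_x{\cal L}(\xb,\lambda,\ob)u=0$, contradicting RSSOSC. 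Two of your intermediate claims are wobbly, though the conclusions you need are true: the inclusion $N(q(\xb,\ob);P;\nabla_xq(\xb,\ob)u)\subset\hat N(q(\xb,\ob);P)$ is false in general (e.g.\ for $Q_{EC}$ at the origin in direction $(-1,0)$), and your ``either way'' remark is a slip -- if you only had $\mu^T\nabla_xq(\xb,\ob)u\le 0$ you would get $u^T\nabla^2_x{\cal L}u\ge 0$ from \eqref{EqLipCond_SOEq}, which does not contradict the strict inequality in RSSOSC; you genuinely need $\ge 0$ (in fact $=0$), which the orthogonality of the generators to the direction does provide. For part 2 you take a genuinely different (more self-contained) route: the paper turns \eqref{EqHoeldCond1}--\eqref{EqHoeldCond2} and $u^T\nabla^2_x{\cal L}u=0$ into the inclusion $\nabla_xq(\xb,\ob)v+u^T\nabla^2_xq(\xb,\ob)u\in T(\nabla_xq(\xb,\ob)u;T(q(\xb,\ob);P))$ and then cites \cite[Lemma 3.16]{Gfr14a} to contradict the essential-local-minimizer property, whereas you propose to prove that contradiction directly with the parabolic arc $x(t)=\xb+tu+\tfrac12t^2v$. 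This works because the $P_i$ are polyhedral (so membership of the second-order term in $T(q(\xb,\ob);P_i)$, $i\in{\cal I}(u)$, makes the arc feasible up to $o(t^2)$) and because \eqref{EqHoeldCond2} together with $u^T\nabla^2_x{\cal L}u=0$ kills the second-order objective growth, so \eqref{EqEssLocMin} fails along the arc; it buys independence from the cited lemma at the cost of the Taylor/feasibility bookkeeping you flag, while the paper's citation keeps the proof short. Note also that part 2 of the theorem does not assume RSSOSC, so the step producing $u^T\nabla^2_x{\cal L}u=0$ from \eqref{EqLipCond_SOEq} and $\mu^T\nabla_xq(\xb,\ob)u=0$ is indispensable in your arc argument and should be carried out explicitly, as in part 1.
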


\begin{proof}
We show the first part by contraposition. If the assertion does not hold, by virtue of Theorem \ref{ThStab} with $F=\nabla_x f$ together with Proposition \ref{PropPropertyR}, there is some triple $(u,\lambda,\mu)$ satisfying $\lambda\in\Lambda^1_{\bar\omega}(\xb;u)$, \eqref{EqLipCond_u}, \eqref{EqLipCond_mu}  and $0=\nabla_x^2{\cal L}(\xb,\lambda,\bar\omega)u+\nabla_x q(\xb;\bar\omega)^T\mu$. By \eqref{EqLipCond_mu},  there are sequences $(\alpha^k)\downarrow 0$ and $(\mu^k)\to\mu$ with $\lambda+\alpha^k\mu^k\in N(q(\xb,\bar\omega);P;\nabla_xq(\xb,\bar\omega)u)$, and by using \cite[Lemma 2.1]{Gfr14a} we obtain $\lambda^T\nabla_x q(\xb,\bar\omega)u=(\lambda+\alpha^k\mu^k)^T\nabla_x q(\xb,\bar\omega)u=0$ $\forall k$, showing $\lambda^T\nabla_x q(\xb,\bar\omega)u=\mu^T\nabla_x q(\xb,\bar\omega)u=0$. Hence $u\in{\cal C}_{\bar\omega}(\xb)$ because of $0=\nabla_x{\cal L}(\xb,\lambda,\bar\omega)u=\nabla_xf(\xb,\bar\omega)u$. It follows
$0=u^T\nabla_x^2{\cal L}(\xb,\lambda,\bar\omega)u$, contradicting RSSOSC.

We also prove the second part by contraposition. Assuming on the contrary that the assertion does not hold, by applying Theorem \ref{ThStab} with $F=\nabla_x f$ together with Proposition \ref{PropPropertyR}, we find $(u,\lambda,\mu,v)$ satisfying $\lambda\in\Lambda^1_{\bar\omega}(\xb;u)$, \eqref{EqLipCond_u},  \eqref{EqLipCond_mu}  and $0=\nabla_x^2{\cal L}(\xb,\lambda,\bar\omega)u+\nabla_x q(\xb;\bar\omega)^T\mu$, \eqref{EqHoeldCond1} and $u^T\nabla_x^2(\lambda^Tq)(\xb,\bar\omega)u=\nabla_x f(\xb,\bar\omega)^Tv$. Proceeding as before, we obtain $u\in{\cal C}_{\bar\omega}(\xb)$ and $0=u^T\nabla_x^2{\cal L}(\xb,\lambda,\bar\omega)u=u^T\nabla_x^2 f(\xb,\bar\omega)u+\nabla_x f(\xb,\bar\omega)^Tv\in T(\nabla_x f(\xb,\bar\omega)u;\R_-)$. Further, we have $T(q(\xb,\bar\omega); P_i)\subset T(\nabla_xq(\xb,\bar\omega)u; T(q(\xb,\bar\omega); P_i))$ for each $i\in{\cal I}(u)$ and thus
\begin{eqnarray*}\nabla_xq(\xb,\bar\omega)v+u^T\nabla_x^2q(\xb,\bar\omega)u&\in& T(q(\xb,\bar\omega);\bigcup_{i\in{\cal I}(u)}P_i)=\bigcup_{i\in{\cal I}(u)}T(q(\xb,\bar\omega);P_i)\\
&\subset& \bigcup_{i\in{\cal I}(u)}T(\nabla_xq(\xb,\bar\omega)u; T(q(\xb,\bar\omega); P_i))\\
&=&T(\nabla_xq(\xb,\bar\omega)u; T(q(\xb,\bar\omega);\bigcup_{i\in{\cal I}(u)} P_i))\\
&=&T(\nabla_xq(\xb,\bar\omega)u; T(q(\xb,\bar\omega);P)).
\end{eqnarray*}
Hence we can conclude from \cite[Lemma 3.16]{Gfr14a} that $\xb$ is not an essential local minimizer of second order for $P(\bar\omega)$ contradicting our assumption.

Finally, in both cases the assertion that $X(\omega)\cap U\not=\emptyset$ $\forall\omega\in W$ follows from Proposition \ref{PropFeas} respectively Theorem \ref{ThMetrRegulPerturb} and Proposition \ref{PropExistLocMin}.
\qed\end{proof}

\begin{remark}The statements \eqref{EqUppLischOpt} and \eqref{EqUppHoeldOpt}, respectively, remain valid if we replace FOSCMS and SOSCMS by the formally weaker assumption that properties $R_1$ and $R_2$, respectively, hold.  In order that the statement $X(\omega)\cap U\not=\emptyset$ holds in case that $\xb$ is an essential local minimizer, we must ensure that ${\cal F}(\omega)\not=\emptyset$ for $\omega$ close to $\ob$, e.g. by the assumption that there is a direction $0\not=u\in T^{\rm lin}_{\ob}(\xb)$ fulfilling the assumptions of Proposition \ref{PropFeas} or by the assumption of metric regularity of $M_\ob$ around $(\xb,0)$.
\end{remark}

\begin{remark}\label{RemMetrReg}
If $p=1$, i.e. if $P$ is a polyhedron, then it follows from \cite[Proposition 3.9]{Gfr11} that the conditions FOSCMS for $M_{\ob}$ and $T^{\rm lin}_{\ob}(\xb)\not=\{0\}$ imply metric regularity of $M_{\ob}$ around $(\xb,0)$.
\end{remark}

\section{Application to MPECs}

In this section we consider the special case $MPEC(\omega)$ as given in Example \ref{ExMPEC}.  In what follows we denote by $\xb$ a  point feasible for the problem $MPEC(\ob)$. Recall that the set $P$ is given by $P=\R^{m_I}_-\times\{0\}^{m_E}\times Q_{EC}^{m_C}$. Hence, by \cite[Proposition 6.41]{RoWe98} and by Lemma \ref{LemTangDirNormalConeCartProduct}, for every $(\tilde g, \tilde h, \tilde a)\in P\subset\R^{m_I}\times\R^{m_E}\times(\R^2)^{m_C}$ we have
\begin{eqnarray*}
T((\tilde g, \tilde h, \tilde a);P)&=&T(\tilde g;\R^{m_I}_-)\times \{0\}^{m_E}\times \prod_{i=1}^{m_C}T(\tilde a_i;Q_{EC}),\\
N((\tilde g, \tilde h, \tilde a);P)&=&N(\tilde g;\R^{m_I}_-)\times \R^{m_E}\times \prod_{i=1}^{m_C}N(\tilde a_i;Q_{EC}),\\
\hat N((\tilde g, \tilde h, \tilde a);P)&=&\hat N(\tilde g;\R^{m_I}_-)\times \R^{m_E}\times \prod_{i=1}^{m_C}\hat N(\tilde a_i;Q_{EC}).
\end{eqnarray*}
Further,  for every direction $(u,v,w)\in T((\tilde g, \tilde h, \tilde a);P)$ we have
\[N((\tilde g, \tilde h, \tilde a);P;(u,v,w))= N(\tilde g;\R^{m_I}_-;u)\times \R^{m_E}\times \prod_{i=1}^{m_C}N(\tilde a_i;Q_{EC};w_i).\]
For the inequality constraints we obviously have
\begin{eqnarray*}T(\tilde g;\R^{m_I}_-)&=&\{u\in\R^{m_I}\mv u_i\leq 0\  \forall i:\tilde g_i=0\},\\
 N(\tilde g;\R^{m_I}_-)&=&\hat N(\tilde g;\R^{m_I}_-)=\{\lambda\in\R^{m_I}_+\mv \lambda_i\tilde g_i=0,\ i=1,\ldots,m_I\},\\
 N(\tilde g;\R^{m_I}_-;u)&=&\{\lambda\in\hat N(\tilde g;\R^{m_I}_-)\mv \lambda_iu_i=0,\ i=1,\ldots,m_I\}.
\end{eqnarray*}

 By straightforward calculation we can obtain the formulas for the regular normal
cone, the limiting normal cone and the contingent cone of the set $Q_{\rm EC}$ as follows:
 For all $a=(a_1,a_2)\in Q_{\rm EC}$ we have
\[\hat N(a;Q_{\rm EC})=\left\{(\xi_1,\xi_2)\mv\begin{array}{ll} \xi_2=0&\mbox{if $0=a_1>a_2$}\\
\xi_1\geq0,\xi_2\geq 0&\mbox{if $a_1=a_2=0$}\\
\xi_1=0&\mbox{if $a_1<a_2=0$}
\end{array}\right\},\]
\[N(a;Q_{\rm EC})=\begin{cases}\hat N(a;Q_{\rm EC})&\mbox{if $a\not=(0,0)$}\\
\{(\xi_1,\xi_2)\mv\mbox{either $\xi_1>0$, $\xi_2>0$ or $\xi_1\xi_2=0$}\}&\mbox{if $a=(0,0)$},
\end{cases}\]
\[T(a;Q_{\rm EC})=\left\{(u_1,u_2)\mv\begin{array}{ll}u_1=0&\mbox{if $0=a_1>a_2$}\\
u_1\leq 0,u_2\leq0, u_1u_2=0&\mbox{if $a_1=a_2=0$}\\
u_2=0&\mbox{if $a_1<a_2=0$}
\end{array}\right\},\]
and for all $u=(u_1,u_2)\in T(a;Q_{\rm EC})$ we have
\[T(u;T(a;Q_{\rm EC}))=\begin{cases}T(a;Q_{\rm EC})&\mbox{if $a\not=(0,0)$}\\
T(u;Q_{\rm EC}))&\mbox{if $a=(0,0)$},
\end{cases}\]
\[\hat N(u;T(a;Q_{\rm EC}))=\begin{cases}\hat N(a;Q_{\rm EC})&\mbox{if $a\not=(0,0)$}\\
\hat N(u;Q_{\rm EC})&\mbox{if $a=(0,0)$},
\end{cases}\]
\[N(a;Q_{\rm EC};u)=\begin{cases}N(a;Q_{\rm EC})&\mbox{if $a\not=(0,0)$}\\
 N(u;Q_{\rm EC})&\mbox{if $a=(0,0)$}.
\end{cases}\]

Denoting
\begin{eqnarray*}
&&\bar I_g:=\{i\in\{1,\ldots,m_I\}\mv g_i(\xb,\ob)=0\},\\
&&\bar I^{+0}:=\{i\in\{1,\ldots,m_C\}\mv G_i(\xb,\ob)>0=H_i(\xb,\ob)\},\\
&&\bar I^{0+}:=\{i\in\{1,\ldots,m_C\}\mv G_i(\xb,\ob)=0<H_i(\xb,\ob)\},\\
&&\bar I^{00}:=\{i\in\{1,\ldots,m_C\}\mv G_i(\xb,\ob)=0=H_i(\xb,\ob)\},
\end{eqnarray*}
the cone $T_\ob^{\rm lin}(\xb)$ is given by
\[T_\ob^{\rm lin}(\xb)=\left\{u\in\R^n\mv \begin{array}{l}\nabla_x g_i(\xb,\ob)u\leq 0,\ i\in\bar I_g,\\
\nabla_x h_i(\xb,\ob)u=0,\ i=1,\ldots,m_E,\\
\nabla_x G_i(\xb,\ob)u=0,\ i\in \bar I^{0+},\\
\nabla_x H_i(\xb,\ob)u=0,\ i\in \bar I^{+0},\\
-(\nabla_x G_i(\xb,\ob)u,\nabla_x H_i(\xb,\ob)u)\in Q_{\rm EC},\ i\in \bar I^{00}
\end{array}\right\}.\]
Given $u\in T^{\rm lin}_\ob(\xb)$ we define
\begin{eqnarray*}&&I_g(u):=\{i\in\bar I_g\mv\nabla_x g_i(\xb,\ob)u=0\}\\
&&I^{+0}(u):=\{i\in \bar I^{00}\mv \nabla_x G_i(\xb,\ob)u>0=\nabla_x H_i(\xb,\ob)u\},\\
&&I^{0+}(u):=\{i\in \bar I^{00}\mv \nabla_x G_i(\xb,\ob)u=0<\nabla_x H_i(\xb,\ob)u\},\\
&&I^{00}(u):=\{i\in \bar I^{00}\mv \nabla_x G_i(\xb,\ob)u=0=\nabla_x H_i(\xb,\ob)u\}.
\end{eqnarray*}
For given $u\in T_\ob^{\rm lin}(\xb)$, the set $N(q(\xb,\ob);P;\nabla_x q(\xb,\ob)u)$ consists of all $\lambda=(\lambda^g,\lambda^h,\lambda^G,\lambda^H)\in\R^{m_I}\times \R^{m_E}\times \R^{m_C}\times\R^{m_C}$ satisfying
\begin{eqnarray}
\label{EqFOSCMS1}&&\lambda^g_i\geq 0,\ i\in I_g(u), \quad \lambda^g_i= 0,\ i\not\in I_g(u),\\
\label{EqFOSCMS2}&&\lambda^H_i=0,\ i\in \bar I^{0+}\cup I^{0+}(u),\quad\lambda^G_i=0,\ i\in \bar I^{+0}\cup I^{+0}(u),\\
\label{EqFOSCMS3}&&\mbox{either $\lambda^G_i>0,\lambda^H_i>0$ or $\lambda^G_i\lambda^H_i=0$},\ i\in I^{00}(u)
\end{eqnarray}

Then we have the following characterizations of metric subregularity: FOSCMS is fulfilled for $M_\ob$ at $\xb$ if and only if for every direction $0\not=u\in T_\ob^{\rm lin}(\xb)$ the only multiplier $\lambda=(\lambda^g,\lambda^h,\lambda^G,\lambda^H)\in\R^{m_I}\times \R^{m_E}\times \R^{m_C}\times\R^{m_C}$ satisfying
\eqref{EqFOSCMS1}-\eqref{EqFOSCMS3} and
\begin{equation}
\label{EqFOSCMS4}\sum_{i=1}^{m_I}\lambda_i^g\nabla_x g_i(\xb,\ob)+\sum_{i=1}^{m_E}\lambda_i^h\nabla_x h_i(\xb,\ob)-\sum_{i=1}^{m_C}(\lambda_i^G\nabla_x G_i(\xb,\ob)+\lambda_i^H\nabla_x H_i(\xb,\ob))=0
\end{equation}
is the trivial multiplier $\lambda=0$.

Similarly,  SOSCMS is fulfilled for $M_\ob$ at $\xb$ if and only if for every direction $0\not=u\in T_\ob^{\rm lin}(\xb)$ the only multiplier $\lambda=(\lambda^g,\lambda^h,\lambda^G,\lambda^H)\in\R^{m_I}\times \R^{m_E}\times \R^{m_C}\times\R^{m_C}$ satisfying \eqref{EqFOSCMS1}-\eqref{EqFOSCMS4}  and
\begin{equation}
\label{EqSOSCMS}u^T\left(\sum_{i=1}^{m_I}\lambda_i^g\nabla_x^2 g_i(\xb,\ob)+\sum_{i=1}^{m_E}\lambda_i^h\nabla_x^2 h_i(\xb,\ob)-\sum_{i=1}^{m_C}(\lambda_i^G\nabla_x^2 G_i(\xb,\ob)+\lambda_i^H\nabla^2_x H_i(\xb,\ob))\right)u\geq 0
\end{equation}
is $\lambda=0$.

Further, $M_\ob$ is metrically regular around $(\xb,0)$ if and only if in case $u=0$ the only multiplier $\lambda$ fulfilling \eqref{EqFOSCMS1}-\eqref{EqFOSCMS4}
is $\lambda=0$. Note that $ I_g(0)=\bar I_g$, $I^{+0}(0)=I^{0+}(0)=\emptyset$ and $I^{00}(0)=\bar I^{00}$.

We now translate Theorem \ref{ThStab} into terms of $MPEC(\omega)$. To do this it is convenient to consider the (extended) Lagrangian
\[{\cal L}^{\lambda_0}(x,\lambda,\omega):=\lambda_0 f(x,\omega)+\sum_{i=1}^{m_I}\lambda_i^g g_i(\xb,\omega)+\sum_{i=1}^{m_E}\lambda_i^h h_i(\xb,\omega)-\sum_{i=1}^{m_C}(\lambda_i^G G_i(\xb,\omega)+\lambda_i^H H_i(\xb,\omega))\]
where $\lambda_0\in\R$, $x\in\R^n$, $\lambda=(\lambda^g,\lambda^h,\lambda^G,\lambda^H)\in\R^{m_I}\times \R^{m_E}\times \R^{m_C}\times\R^{m_C}$ and $\omega\in\Omega$. The Lagrangian ${\cal L}^1$ corresponds to the Lagrangian ${\cal L}$ as defined in \eqref{EqLagr}.

\begin{corollary}\label{CorStabMPEC}
Let $\xb$ be feasible for $MPEC(\ob)$.
\begin{enumerate}
\item If property $R_1$ holds and there does not exist $0\not=u\in T^{\rm lin}_\ob(\xb)$, $\lambda=(\lambda^g,\lambda^h,\lambda^G,\lambda^H)$ and $\mu=(\mu^g,\mu^h,\mu^G,\mu^H)$ satisfying \eqref{EqFOSCMS1}-\eqref{EqFOSCMS3},

    \begin{eqnarray}
    \label{EqTanLambda1}&&\mu^g_i\geq 0,\ i\in I_g(u):\lambda_i^g=0, \quad \mu^g_i= 0,\ i\not\in I_g(u),\\
    \label{EqTanLambda2}&&\mu^H_i=0,\ i\in \bar I^{0+}\cup I^{0+}(u),\quad\mu^G_i=0,\ i\in \bar I^{+0}\cup I^{+0}(u),\\
    \label{EqTanLambda3}&&\mbox{either $\mu^G_i>0,\mu^H_i>0$ or $\mu^G_i\mu^H_i=0$},\ i\in I^{00}(u):\lambda^G_i=\lambda^H_i=0\\
    \label{EqTanLambda4}&&\mu^G_i\geq0,\ i\in I^{00}(u):\lambda^G_i=0, \lambda^H_i>0,\quad \mu^G_i=0,\ i\in I^{00}(u):\lambda^G_i=0, \lambda^H_i<0,\\
    \label{EqTanLambda5}&&\mu^H_i\geq0,\ i\in I^{00}(u):\lambda^H_i=0, \lambda^G_i>0,\quad \mu^H_i=0,\ i\in I^{00}(u):\lambda^H_i=0, \lambda^G_i<0,\quad\\
    \label{EqGradLagrZero}&&\nabla_x{\cal L}^1(\xb,\lambda,\ob)=0,\\
    \label{EqSecOrderStab}&&\nabla_x^2{\cal L}^1(\xb,\lambda,\ob)u+\nabla_x{\cal L}^0(\xb,\mu,\ob)=0,
    \end{eqnarray}
    then there are neighborhoods $U$ of $\xb$, $W$ of $\ob$ and a constant $L>0$ such that for the stationary solution mappings $S(\omega)$ and $S_M(\omega)$ of $MPEC(\omega)$ one has
    \[(S(\omega)\cup S_M(\omega))\cap U\subset\xb +L\tau_1(\omega)\B_{\R^n}\ \forall\omega\in W.\]
\item If property $R_2$ holds and there does not exist $0\not=u\in T^{\rm lin}_\ob(\xb)$, $\lambda=(\lambda^g,\lambda^h,\lambda^G,\lambda^H)$, $\mu=(\mu^g,\mu^h,\mu^G,\mu^H)$ and $\nu\in\R^n$ satisfying \eqref{EqFOSCMS1}-\eqref{EqFOSCMS3}, \eqref{EqTanLambda1}-\eqref{EqSecOrderStab} and
    \begin{eqnarray}
    \label{EqTanNu1}\hspace{-3em}&&\nabla_x g_i(\xb,\ob)\nu+ u^T\nabla _x^2g_i(\xb,\ob)u\leq 0,\ i\in\bar I_g,\\
    \label{EqTanNu2}\hspace{-3em}&&\nabla_x h_i(\xb,\ob)\nu+ u^T\nabla _x^2h_i(\xb,\ob)u= 0,\ i=1,\ldots, m_E,\\
    \label{EqTanNu3}\hspace{-3em}&&\nabla_x G_i(\xb,\ob)\nu+ u^T\nabla _x^2G_i(\xb,\ob)u=0,\ i\in\bar I^{0+}\cup I^{0+}(u),\\
    \label{EqTanNu4}\hspace{-3em}&&\nabla_x H_i(\xb,\ob)\nu+ u^T\nabla _x^2H_i(\xb,\ob)u=0,\ i\in\bar I^{+0}\cup I^{+0}(u),\\
    \label{EqTanNu5}\hspace{-3em}&&-(\nabla_x G_i(\xb,\ob)\nu+ u^T\nabla _x^2G_i(\xb,\ob)u,\nabla_x H_i(\xb,\ob)\nu+ u^T\nabla _x^2H_i(\xb,\ob)u)\in T((0,0),Q_{EC}),\ i\in I^{00}(u),\qquad\\
    \label{EqSecOrderNu}\hspace{-3em}&&u^T\nabla_x^2{\cal L}^0(\xb,\lambda,\ob)u=\nabla_x f(\xb,\ob)\nu,
    \end{eqnarray}
    then there are neighborhoods $U$ of $\xb$, $W$ of $\ob$ and a constant $L>0$ such that for the stationary solution mappings $S(\omega)$ and $S_M(\omega)$ of $MPEC(\omega)$ one has
    \[(S(\omega)\cup S_M(\omega))\cap U\subset\xb +L\tau_2(\omega)\B_{\R^n}\ \forall\omega\in W.\]
\end{enumerate}
\end{corollary}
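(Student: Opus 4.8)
The plan is to obtain both parts from Theorem \ref{ThStab} applied to the data of $MPEC(\ob)$, i.e.\ to the mapping $q:=(g,h,-(G_1,H_1),\ldots,-(G_{m_C},H_{m_C}))$ and the set $P:=\R^{m_I}_-\times\{0\}^{m_E}\times Q_{EC}^{m_C}$ of Example \ref{ExMPEC}, with the choice $F:=\nabla_x f$, under which $\hat S=S$, $\hat S_M=S_M$ and $\hat\tau_l=\tau_l$. Since property $R_1$ (resp.\ $R_2$) is assumed, Theorem \ref{ThStab}(1) (resp.\ (2)) is applicable, so it suffices to show that, for a triple $(u,\lambda,\mu)$ resp.\ a quadruple $(u,\lambda,\mu,v)$, the abstract relations \eqref{EqLipCond_u}--\eqref{EqLipCond_SOEq} resp.\ those together with \eqref{EqHoeldCond1}--\eqref{EqHoeldCond2} imply, upon setting $\nu:=v$, the explicit relations of the corollary. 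The corollary then follows by contraposition: non-existence of an admissible $(u,\lambda,\mu,\nu)$ in the corollary forces non-existence of an admissible triple/quadruple in Theorem \ref{ThStab}.

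The first step is to rewrite the first-order objects. By the Cartesian-product formulas recalled at the beginning of this section, $N(q(\xb,\ob);P;\nabla_x q(\xb,\ob)u)$ splits as $N(g(\xb,\ob);\R^{m_I}_-;\nabla_x g(\xb,\ob)u)\times\R^{m_E}\times\prod_{i=1}^{m_C}N\big(-(G_i(\xb,\ob),H_i(\xb,\ob));Q_{EC};-(\nabla_x G_i(\xb,\ob)u,\nabla_x H_i(\xb,\ob)u)\big)$. Inserting the explicit expression for the inequality block and, on each complementarity block, distinguishing whether the base point $-(G_i(\xb,\ob),H_i(\xb,\ob))$ is nonzero (so $i\in\bar I^{+0}\cup\bar I^{0+}$) or zero (so $i\in\bar I^{00}$, in which case the directional normal cone reduces to $N\big(-(\nabla_x G_i(\xb,\ob)u,\nabla_x H_i(\xb,\ob)u);Q_{EC}\big)$ and the direction fixes whether $i$ lies in $I^{+0}(u)$, $I^{0+}(u)$ or $I^{00}(u)$), one recovers exactly \eqref{EqFOSCMS1}--\eqref{EqFOSCMS3}. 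Likewise, by writing $\nabla_x q(\xb,\ob)^T\lambda$, $\nabla_x q(\xb,\ob)^T\mu$, $\nabla_x F(\xb,\ob)=\nabla_x^2 f(\xb,\ob)$ and $\nabla_x^2(\lambda^Tq)(\xb,\ob)$ out in terms of the extended Lagrangians ${\cal L}^1$ and ${\cal L}^0$, the equations \eqref{EqLipCond_FOEq} and \eqref{EqLipCond_SOEq} become \eqref{EqGradLagrZero} and \eqref{EqSecOrderStab}.

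The remaining relations hinge on two tangent-cone computations, each done by first splitting with Lemma \ref{LemTangDirNormalConeCartProduct} (applicable because all sets involved are unions of finitely many convex polyhedra) and then a case analysis on the complementarity blocks. For \eqref{EqLipCond_mu} one needs $T\big(\lambda;N(q(\xb,\ob);P;\nabla_x q(\xb,\ob)u)\big)$; factorwise this is immediate on the inequality block and on the blocks $i\in\bar I^{+0}\cup I^{+0}(u)$, $i\in\bar I^{0+}\cup I^{0+}(u)$, where the factor is a polyhedral cone or a subspace, whereas on a block $i\in I^{00}(u)$ the factor equals $N((0,0);Q_{EC})$ and the tangent cone at $(\lambda_i^G,\lambda_i^H)$ is obtained by distinguishing the sign pattern of $(\lambda_i^G,\lambda_i^H)$ (both zero; one coordinate zero, the other positive; one coordinate zero, the other negative). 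This gives \eqref{EqTanLambda1}--\eqref{EqTanLambda5} and proves part 1. For part 2, \eqref{EqHoeldCond2} becomes \eqref{EqSecOrderNu} exactly as before, and for \eqref{EqHoeldCond1} one identifies ${\cal I}(u)$: a polyhedron $P_i$ --- a choice of a branch $\R_-\times\{0\}$ or $\{0\}\times\R_-$ on every complementarity pair $k$ --- belongs to ${\cal I}(u)$ iff on each pair $k$ the chosen branch is the one compatible with $u$, which is forced when $k\in\bar I^{+0}\cup I^{+0}(u)$ resp.\ $k\in\bar I^{0+}\cup I^{0+}(u)$ and is free when $k\in I^{00}(u)$. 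Hence, again by Lemma \ref{LemTangDirNormalConeCartProduct}, $T\big(q(\xb,\ob);\bigcup_{i\in{\cal I}(u)}P_i\big)$ is a product of tangent cones with $T((0,0);Q_{EC})$ occurring exactly in the blocks $k\in I^{00}(u)$, and reading off the component-wise inclusions for $\nabla_x q(\xb,\ob)v+u^T\nabla_x^2 q(\xb,\ob)u$ yields \eqref{EqTanNu1}--\eqref{EqTanNu5}. Together with the triple from part 1 this exhibits a quadruple satisfying \eqref{EqFOSCMS1}--\eqref{EqSecOrderNu}, a contradiction.

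I expect the two tangent-cone evaluations to be the only substantive work: once Lemma \ref{LemTangDirNormalConeCartProduct} reduces them to individual blocks, what is left is the tedious but elementary bookkeeping of the partition of $\bar I^{00}$ into $I^{+0}(u)$, $I^{0+}(u)$, $I^{00}(u)$, of the sign pattern of $(\lambda_i^G,\lambda_i^H)$ on $I^{00}(u)$, and of re-expressing the Jacobian and Hessians of $q$ via ${\cal L}^1$ and ${\cal L}^0$; no analytic ingredient beyond Theorem \ref{ThStab} is required.
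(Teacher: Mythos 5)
Your proposal is correct and follows essentially the paper's own route: the paper obtains the corollary precisely by specializing Theorem \ref{ThStab} with $F=\nabla_x f$ (so that $\hat S=S$, $\hat S_M=S_M$, $\hat\tau_l=\tau_l$) and translating \eqref{EqLipCond_u}--\eqref{EqHoeldCond2} blockwise via Lemma \ref{LemTangDirNormalConeCartProduct} and the explicit cones of $Q_{EC}$ computed at the start of Section 5, exactly as recorded in its correspondence table. One cosmetic remark: the exact blockwise reading of \eqref{EqHoeldCond1} actually gives, in addition, $\nabla_x H_i(\xb,\ob)\nu+u^T\nabla_x^2H_i(\xb,\ob)u\geq 0$ for $i\in I^{0+}(u)$ and $\nabla_x G_i(\xb,\ob)\nu+u^T\nabla_x^2G_i(\xb,\ob)u\geq 0$ for $i\in I^{+0}(u)$, which are simply dropped in \eqref{EqTanNu3}--\eqref{EqTanNu4}; since the contraposition only needs the implication from the theorem's conditions to the corollary's (weaker) ones, this does not affect your argument.
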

In the following table we specify for each condition of Theorem \ref{ThStab} its corresponding counterpart in Corollary  \ref{CorStabMPEC}:

\[\begin{tabular}{|c|c||c|c|}
\noalign{\hrule}
Thm.\ref{ThStab}(1.)& Cor.\ref{CorStabMPEC}(1.)&Thm.\ref{ThStab}(2.)& Cor.\ref{CorStabMPEC}(2.)\\
\noalign{\hrule}
\eqref{EqLipCond_lambda}&\eqref{EqFOSCMS1}-\eqref{EqFOSCMS3}&\eqref{EqHoeldCond1}&\eqref{EqTanNu1}-\eqref{EqTanNu5}\\
\eqref{EqLipCond_mu}&\eqref{EqTanLambda1}-\eqref{EqTanLambda5}&\eqref{EqHoeldCond2}&\eqref{EqSecOrderNu}\\
\eqref{EqLipCond_FOEq}&\eqref{EqGradLagrZero}&&\\
\eqref{EqLipCond_SOEq}&\eqref{EqSecOrderStab}&&\\
\noalign{\hrule}
\end{tabular}\]

For verifying RSSOSC we also need the sets $\Lambda^1_\ob(\xb;u)$. For given $u\in{\cal C}(\xb)$,  this set is the collection of all multipliers $\lambda=(\la^g,\la^h, \la^G,\la^H)\in \R^{m_I}_+ \times \R^{m_E} \times \R^{m_C}\times \R^{m_C}$ fulfilling \eqref{EqFOSCMS1}-\eqref{EqFOSCMS3} and $\nabla_x {\cal L}^{1}(x,\la,\ob) = 0$.

Finally let us mention that the definition of M-stationary solutions for $MPEC(\omega)$ is the same as coined by Scholtes \cite{Sch01},
i.e., the set of MPEC-multipliers associated with $x$,
\begin{eqnarray*}
\lefteqn{\Lambda_{MPEC}^M(x,{\omega})}\\& := &\left\{ \lambda=(\la^g,\la^h, \la^G,\la^H)\in \R^{m_I}_+ \times \R^{m_E} \times \R^{m_C}\times \R^{m_C}\mv
\begin{array}{l}\nabla_x {\cal L}^{1}(x,\la,\omega) = 0,\\
{\lambda^g}^Tg(\xb,\bar\omega) = 0,\\
\lambda^H_i=0,\ i\in \bar I^{0+},\quad\lambda^G_i=0,\ i\in \bar I^{+0},\\
\mbox{either $\lambda^G_i>0,\lambda^H_i>0$ or $\lambda^G_i\lambda^H_i=0$},\ i\in I^{00}
\end{array}\right\},,
\end{eqnarray*}
is not empty.

Now we demonstrate  our results in the following examples:
\begin{example}
\label{ExDemoMPEC}
Consider the following MPEC depending on the parameter $\omega\in\Omega:=\R_+$,
\begin{eqnarray*}
MPEC(\omega)\qquad \min_x -2x_1+x_2&&\\
\mbox{subject to}\qquad g_1(x,\omega):=x_1-x_2-\omega&\leq& 0,\\
g_2(x,\omega):=\omega \phi(x_1)-x_2&\leq&0,\\
-(G_1(x,\omega),H_1(x,\omega)):=-(x_1,x_2)&\in& Q_{EC},
\end{eqnarray*}
where $\phi(x):=x^6(1-\cos\frac 1{x})$ for $x\not=0$ and $\phi(0):=0$, with $\xb=(0,0)$ and $\ob=0$. Then we have $\bar I_g=\{1,2\}$, $\bar I^{0+}=\bar I^{+0}=\emptyset$, $\bar I^{00}=\{1\}$,
\[T_\ob^{\rm lin}(\xb)=\left\{(u_1,u_2)\mv u_1-u_2\leq 0,\ -u_2\leq 0,\ -(u_1,u_2)\in Q_{EC}
\right\}=\{(0,u_2)\mv u_2\geq 0\}\]
and we obtain, that $I_g(u)=I^{+0}(u)=I^{00}(u)=\emptyset$, $I^{0+}(u)=\{1\}$ for every $0\not= u=(0,u_2)\in T_\ob^{\rm lin}(\xb)$  and the set of multipliers $\lambda=(\lambda^g_1,\lambda^g_2,\lambda^G,\lambda^H)$ fulfilling \eqref{EqFOSCMS1}-\eqref{EqFOSCMS3}
is given by the relations
\begin{equation}\label{EqExampleCondLambda}\lambda^g_1=0,\ \lambda^g_2=0,\ \lambda^H=0.\end{equation}
Since \eqref{EqFOSCMS4} reads as
\[\lambda^g_1(1,-1)+\lambda^g_2(0,-1)-\lambda^G(1,0)-\lambda^H(0,1)=(0,0),\]
we conclude that FOSCMS is fulfilled and therefore property $R_1$ holds. But for $u=(0,0)$ we see that $\lambda=(0,1,0,-1)$ satisfies \eqref{EqFOSCMS1}-\eqref{EqFOSCMS4} and therefore metric regularity around $(\xb,0)$ does not hold.

It is easy to see that ${\cal C}_\ob(\xb)=\{0\}$. Hence RSSOSC is fulfilled, $\xb$ is an extended M-stationary point in the sense of \cite{Gfr14a} and therefore $\xb$ is an essential local minimizer of second order by \cite[Theorem 3.21]{Gfr14a}. Hence we can apply Theorem \ref{ThStabLocMin}(1.) to obtain
\[(S(\omega)\cup S_M(\omega))\cap U\subset L\omega \B_{\R^2}\ \forall\omega\in W\]
for some neighborhoods $U$ of $\xb$ and $W$ of $\ob$ and some constant $L>0$. Further we have $X(\omega)\cap U\not=\emptyset$ $\forall \omega\in W$, since $(0,1)\in T_\ob^{\rm lin}(\xb)$.
Further, the set of MPEC-multipliers is given by
\[\Lambda_{MPEC}^M(\xb,\ob)=\{(0,1,-2,0)\}\cup\{(2,\lambda_2^g,0,\lambda^H)\mv \lambda_2^g\geq 0, \lambda_2^g+\lambda^H=-1\}.\]

Now let us compute the sets $S(\omega)$, $S_M(\omega)$ and $X(\omega)$. For every $\omega> 0$ the feasible set ${\cal F}(\omega)$ consists of the union of the nonnegative $x_2$ axis, $\{(0,x_2)\mv x_2\geq 0\}$, and the set $\tilde X(\omega):=\{(x_1,0)\mv 0<x_1\leq\omega,  \phi(x_1)=0\}=\{(\frac 1{2k\pi},0)\mv k\in\N, 2k\pi\omega\geq 1\}$ consisting of a sequence of isolated points with limit $\xb$. It is easily checked that $(0,0)$ is M-stationary
with unique MPEC-multiplier $(0,1,-2,0)$
for every $\omega>0$, but not a local minimizer. Further, every element of $\tilde X(\omega)$ is a local minimizer and consequently B-stationary, but all these local minimizers are not M-stationary, because the constraints are degenerated; the only possible exception being the point $(\omega,0)$ if $\frac 1{2\pi\omega}\in \N$,
where the multipliers are
\[\Lambda_{MPEC}^M((\omega,0),\omega)=\{(2,\lambda_2^g,0,\lambda^H)\mv \lambda_2^g\geq 0, \lambda_2^g+\lambda^H=-1\}.\]
Summarizing all, we have
\[S(\omega)=X(\omega)=\tilde X(\omega), S_M(\omega)=\begin{cases}
\{(0,0)\}&\mbox{if $\frac 1{2\pi\omega}\not\in \N$}\\
\{(0,0), (\omega,0)\}&\mbox{else}
\end{cases}\]
for  $\omega>0$ and $S(0)=S_M(0)=X(0)=\{0,0\}$. It is quite surprising that we could prove existence and upper Lip\-schitz continuity of solutions in the absence of metric regularity of the constraints, a situation which is not possible in case of NLP (see,e.g. \cite[Lemma 8.31]{KlKum02}).

For the sake of completeness we also formulate explicitly  the sufficient conditions for upper Lip\-schitz continuity of Corollary \ref{CorStabMPEC}, although we know by the proof of Theorem \ref{ThStabLocMin} that they are implied by RSSOSC. The conditions of Corollary \ref{CorStabMPEC} are, that there are no elements $u=(0,u_2)$ with $u_2>0$ and $\lambda=(\lambda^g_1,\lambda^g_2,\lambda^G,\lambda^H)$, $\mu=(\mu^g_1,\mu^g_2,\mu^G,\mu^H)$ fulfilling \eqref{EqExampleCondLambda},
$\mu^g_1=\mu^g_2=\mu^H=0$,
\begin{equation}\label{EqExampleGradLagr}(-2,1)+\lambda^g_1(1,-1)+\lambda^g_2(0,-1)-\lambda^G(1,0)-\lambda^H(0,1)=(0,0)
\end{equation}
and
\[(0,0)+\mu^g_1(1,-1)+\mu^g_2(0,-1)-\mu^G(1,0)-\mu^H(0,1)=(0,0).\]
It is easy to see that this holds true, because \eqref{EqExampleCondLambda} and \eqref{EqExampleGradLagr} are inconsistent.

For this example let us now compare  our results with those of Guo,Lin and Ye \cite[Theorem 5.5]{GuLiYe12}, who considered a somewhat different setting, namely the calmness of pairs of M-stationary solutions together with their associated multipliers. Application of \cite[Theorem 5.5]{GuLiYe12} yields that, for every $\lab\in\Lambda_{MPEC}(\xb,\ob)$, there are neighborhoods $U$ of $(\xb,\lab)$ and $W$ of $\ob$ and a constant $\kappa>0$ such that for every $(x,\lambda,\omega)\in U\times W$ with $x\in S_M(\omega)$, $\lambda\in \Lambda_{MPEC}^M(x,\omega)$ one has
\[\norm{x-\xb}+\dist{\lambda,\Lambda_{MPEC}(\xb,\ob)}\leq \kappa\norm{\omega -\ob}.\]
In particular, by applying this result with $\lab=(0,1,-2,0)$ and $\lab=(2,0,0,-1)$, respectively, we obtain that
$S_M(\omega)$ behaves upper Lipschitz stable. But in our example we also have local minimizers in $X(\omega)$, which are not M-stationary, and \cite[Theorem 5.5]{GuLiYe12} does not give any information for these local minimizers, whereas our theory establishes upper Lipschitz continuity.
Moreover, if we slightly change the data of this example by setting $\phi(x):=x^6(0.9-\cos\frac 1{x})$ for $x\not=0$ and $\phi(0):=0$, then one can show that for $\omega>0$ all local minimizers in $X(\omega)$ are also M-stationary, but for all local minimizers different from $(\omega,0)$ one has that the corresponding MPEC multiplier tends to infinity when $\omega$ tends to $\ob$. Also in this case \cite[Theorem 5.5]{GuLiYe12} does not give any information due to the discontinuity of the multipliers in contrast to our results.
\end{example}
\begin{example}\label{JoShikStef} [see \cite{JoShikStef12}]
Consider the problem
\begin{eqnarray*}
MPEC(\omega)\qquad &&\min_x-x_1-(x_2+\omega)^2\\
\mbox{subject to}\qquad&&
-(G_1(x,\omega),H_1(x,\omega)):=-(x_1,x_2)\in Q_{EC},
\end{eqnarray*}
depending on the parameter $\omega\in\Omega:=\R$. This example was used in \cite{JoShikStef12} to demonstrate that M-stationary points are not strongly stable in the sense of Kojima \cite{Koj80}, but the C-stationary points are strongly stable.
Straightforward analysis yields that
\[X(\omega)=S(\omega)=S_M(\omega)=\begin{cases}\emptyset&\mbox{if $\omega>0$,}\\
(0,-\omega)&\mbox{if $\omega<0$}
\end{cases}\]
and $X(0)=S(0)=\emptyset$, $S_M(0)=(0,0)$.
Let us now choose $\xb=(0,0)$ and $\ob=0$ as reference point and apply Corollary \ref{CorStabMPEC}(1.). Obviously, the constraint mapping $M_\ob$ is metrically regular around $(\xb,0)$ and thus property $R_1$ is fulfilled. Since the cone $ T^{\rm lin}_\ob(\xb)$ is generated by the 2 directions $u^1=(1,0)$ and $u^2=(0,1)$, in order to establish upper Lipschitz stability of $S(\omega)\cup S_M(\omega)$ it suffices  to show that there are no multipliers $\lambda=(\lambda^G,\lambda^H)$ and $\mu=(\mu^G,\mu^H)$ fulfilling either
\begin{eqnarray*}
&&\lambda^G=\mu^G=0,\\
&&\nabla_x{\cal L}^1(\xb,\lambda,\ob)=(-1,0)-\lambda^G(1,0)-\lambda^H(0,1)=0,\\
&&\nabla_x^2{\cal L}^1(\xb,\lambda,\ob)u^1+\nabla_x{\cal L}^0(\xb,\mu,\ob)=(0,0)-\mu^G(1,0)-\mu^H(0,1)=0
\end{eqnarray*}
or
\begin{eqnarray*}
&&\lambda^H=\mu^H=0,\\
&&\nabla_x{\cal L}^1(\xb,\lambda,\ob)=(-1,0)-\lambda^G(1,0)-\lambda^H(0,1)=0,\\
&&\nabla_x^2{\cal L}^1(\xb,\lambda,\ob)u^2+\nabla_x{\cal L}^0(\xb,\mu,\ob)=(0,-2)-\mu^G(1,0)-\mu^H(0,1)=0,
\end{eqnarray*}
which is obviously the case. Hence we obtain
\[(S(\omega)\cup S_M(\omega))\cap U\subset L\omega \B_{\R^2}\ \forall\omega\in W\]
for some neighborhoods $U$ of $\xb$ and $W$ of $\ob$ and some constant $L>0$.\\
Note that the assumptions of \cite[Theorem 5.5]{GuLiYe12} are not fulfilled, and therefore it cannot be applied.
\end{example}

\noindent Corollary \ref{CorStabMPEC} recovers well-known results for standard nonlinear programs in the case of $C^2$ data.
Consider the parametric nonlinear optimization problem
\begin{equation}\label{nlp}
NLP(\omega)\qquad \min f(x,\omega) \quad \mbox{subject to} \quad
g(x,\omega)\leq 0,\ h(x,\omega)=0,
\end{equation}
where the function $f:\R^n\times\R^s\to\R$, and $q=(g,h): \R^n \times\R^s\to \R^{m_I}\times \R^{m_E}$
satisfy the Basic Assumption \ref{AssBasic} and, in addition,  that $f(\xb,\cdot)$, $\nabla_x f(\xb,\cdot)$, $q(\xb,\cdot)$ and $\nabla_x q(\xb,\cdot)$ are Lipschitzian near $\ob$.

We use the same notation as above in this section,
the results and representations reduce to forms which omit the complementarity data $G_i$ and $H_i$.
In particular, the cone $T_\ob^{\rm lin}(\xb)$ is given by
\[T_\ob^{\rm lin}(\xb)=\left\{u\in\R^n\mv \begin{array}{l}\nabla_x g_i(\xb,\ob)u\leq 0,\ i\in\bar I_g,\\
\nabla_x h_i(\xb,\ob)u=0,\ i=1,\ldots,m_E
\end{array}\right\},
\]
with $\bar{I}_g = \{ i \in \{1,\ldots,m_I\} \mv g_i(\xb,\ob) = 0\}$, and the extended Lagrangian defined above reduces to
\[
{\cal L}^{\lambda_0}(x,\lambda^g,\la^h,\omega):=\lambda_0 f(x,\omega)+\sum_{i=1}^{m_I}\lambda_i^g g_i(x,\omega)+\sum_{i=1}^{m_E}\lambda_i^h h_i(x,\omega).
\]
Then a point $x\in {\cal F}(\omega)$ feasible for $NLP(\omega)$ is called \textit{stationary in the
Karush-Kuhn-Tucker (KKT) sense}, if the set of multipliers associated with $x$,
\[
\Lambda(x,{\omega}) := \{ (\la^g,\la^h)\in \R^{m_I}_+ \times \R^{m_E} \, | \,
\nabla_x {\cal L}^{1}(x,\la^g,\la^h,\omega) = 0, {\lambda^g}^Tg(\xb,\bar\omega) = 0 \}
\]
is not empty. Further recall that under some constraint qualification (CQ), e.g. the Mangasarian-Fromovitz CQ (MFCQ),
the three
concepts of M-stationarity, B-stationarity and KKT-stationarity coincide. Since MFCQ persists under small perturbations
in our setting, hence $S(\omega) \cap U$ and $S_M(\omega) \cap U$ coincide for some neighborhood $U$ of $\xb \in S(\ob)$ and all $\omega$ close to $\ob$, provided that $\xb$ satisfies MFCQ.

We show how to prove two classical results by means of Corollary \ref{CorStabMPEC} or Theorem \ref{ThStabLocMin}, respectively.
\begin{corollary} \label{nlpul} (Klatte, Kummer \cite[Theorem 8.24]{KlKum02}) \ Given a stationary solution
$\xb$ of $NLP(\bar{\omega})$ in the KKT sense,
we suppose that MFCQ is satisfied at $(\xb,\bar\omega)$. Then $S$ is locally upper Lipschitz at   $(\bar\omega,\xb)$ if
for every $\la = (\la^g, \la^h) \in \Lambda(\xb,\bar{\omega})$, the system
\begin{equation}\label{cstab}
\begin{array}{l}
\, \  u \in T_\ob^{\rm lin}(\xb), \quad  (\alpha,\beta) \in \R^{m_I}\times \R^{m_E},  \vspace{1ex} \\
\, \   \nabla_x^2{\cal L}^1(\xb,\la^g,\la^h,\ob)u+\nabla_x{\cal L}^0(\xb,\alpha,\beta,\ob)=0 ,  \vspace{1ex}\\
\begin{array}{ll}
\nabla_x g_i(\xb,\ob) \,u = 0, & \mbox{if } \ i \in \bar{I}_g: \ \la^g_i > 0, \vspace{1ex} \\
\alpha_i \ge 0, \ \alpha_i \nabla_x g_i(\xb,\ob)\, u = 0 , & \mbox{if }  \ i \in \bar{I}_g:  \ \la^g_i = 0,\vspace{1ex}\\
\alpha_i = 0 , & \mbox{if } \ i \not \in  \bar{I}_g.
\end{array}
\end{array}
\end{equation}
has no solution $(u,\alpha,\beta)$ with $u \not= 0$.
\end{corollary}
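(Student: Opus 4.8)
The plan is to deduce this from Corollary \ref{CorStabMPEC}(1.), applied to the degenerate MPEC with no complementarity constraints ($m_C=0$). In that situation $\bar I^{+0}=\bar I^{0+}=\bar I^{00}=\emptyset$ and $I^{+0}(u)=I^{0+}(u)=I^{00}(u)=\emptyset$ for every $u$, so conditions \eqref{EqFOSCMS1}--\eqref{EqFOSCMS3} collapse to \eqref{EqFOSCMS1}, conditions \eqref{EqTanLambda1}--\eqref{EqTanLambda5} collapse to \eqref{EqTanLambda1}, while \eqref{EqGradLagrZero} and \eqref{EqSecOrderStab} are unchanged. I would then show that ``\eqref{cstab} has no nonzero solution for every $\lambda\in\Lambda(\xb,\ob)$'' is exactly the non-existence of a triple $(u,\lambda,\mu)$ satisfying the $m_C=0$ versions of \eqref{EqFOSCMS1}, \eqref{EqTanLambda1}, \eqref{EqGradLagrZero}, \eqref{EqSecOrderStab} together with $0\neq u\in T^{\rm lin}_{\ob}(\xb)$, and invoke Corollary \ref{CorStabMPEC}(1.).

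I would first check property $R_1$. For $P=\R^{m_I}_-\times\{0\}^{m_E}$ the condition of Theorem \ref{ThCharMetrSubReg}(1.) reads: $\sum_{i\in\bar I_g}\lambda^g_i\nabla_xg_i(\xb,\ob)+\sum_i\lambda^h_i\nabla_xh_i(\xb,\ob)=0$ with $\lambda^g\geq0$ forces $\lambda=0$, which is precisely the dual form of MFCQ at $(\xb,\ob)$. Hence $M_{\ob}$ is metrically regular around $(\xb,0)$, and Theorem \ref{ThMetrRegulPerturb} provides neighborhoods on which $M_\omega$ is metrically regular around every $(x,0)$ with $x\in{\cal F}(\omega)$; in particular property $R_1$ holds.

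The core step is the index-set dictionary. Fix $u\in T^{\rm lin}_{\ob}(\xb)$ and $\lambda=(\lambda^g,\lambda^h)\in\Lambda(\xb,\ob)$ satisfying the active-set coupling of \eqref{cstab}, namely $\nabla_xg_i(\xb,\ob)u=0$ whenever $\lambda^g_i>0$; since $\lambda^g\geq0$ and $\lambda^g_i=0$ for $i\notin\bar I_g$, this coupling is equivalent to $\lambda^g_i=0$ for all $i\notin I_g(u)$, i.e.\ to \eqref{EqFOSCMS1}. Multiplying $\nabla_x{\cal L}^1(\xb,\lambda,\ob)=0$ (this is \eqref{EqGradLagrZero}, which holds since $\lambda\in\Lambda(\xb,\ob)$) by $u$ and using $\lambda^g_i\geq0$, $\nabla_xg_i(\xb,\ob)u\leq0$ for $i\in\bar I_g$, $\nabla_xh_i(\xb,\ob)u=0$, gives $\nabla_xf(\xb,\ob)u=-\sum_{i\in\bar I_g}\lambda^g_i\nabla_xg_i(\xb,\ob)u=0$, so $u\in{\cal C}_{\ob}(\xb)$ and $\lambda\in\Lambda^1_{\ob}(\xb;u)$; conversely every pair with $\lambda\in\Lambda^1_{\ob}(\xb;u)$ has these properties. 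The conditions on $(\alpha,\beta)$ in \eqref{cstab} then match \eqref{EqTanLambda1} term by term, using that for $i\in\bar I_g$ one has $\nabla_xg_i(\xb,\ob)u=0\Leftrightarrow i\in I_g(u)$: for $i\in\bar I_g$ with $\lambda^g_i=0$ the relation $\alpha_i\nabla_xg_i(\xb,\ob)u=0$ forces $\alpha_i=0$ when $i\notin I_g(u)$ and leaves $\alpha_i\geq0$ when $i\in I_g(u)$; for $i\in I_g(u)$ with $\lambda^g_i>0$, $\alpha_i$ is free; and $\alpha_i=0$ for $i\notin\bar I_g$. The second equation of \eqref{cstab} is literally \eqref{EqSecOrderStab} with $\mu:=(\alpha,\beta)$. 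Since any $\lambda\in\Lambda(\xb,\ob)$ violating the coupling cannot solve \eqref{cstab} at all, quantifying over $\lambda\in\Lambda(\xb,\ob)$ in the statement is the same as quantifying over the triples $(u,\lambda,\mu)$ in Corollary \ref{CorStabMPEC}(1.).

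Hence Corollary \ref{CorStabMPEC}(1.) applies and yields neighborhoods $U$ of $\xb$, $W$ of $\ob$ and $L>0$ with $(S(\omega)\cup S_M(\omega))\cap U\subset\xb+L\tau_1(\omega)\B_{\R^n}$ for $\omega\in W$; since MFCQ persists, $S_M(\omega)\cap U$ is for $\omega$ near $\ob$ the set of KKT points of $NLP(\omega)$ near $\xb$, so this bound carries the conclusion to the KKT solution map. Finally, $q(\xb,\cdot)$, $\nabla_xq(\xb,\cdot)$ and $\nabla_xf(\xb,\cdot)$ being Lipschitz near $\ob$ gives $\tau_1(\omega)\leq\ell\norm{\omega-\ob}$ for some $\ell>0$ and $\omega$ near $\ob$, whence $S(\omega)\cap U\subset\xb+L\ell\norm{\omega-\ob}\B_{\R^n}$, the asserted local upper Lipschitz property of $S$ at $(\ob,\xb)$. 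The only genuine work is the dictionary of the previous paragraph --- in particular noticing that criticality of $u$ comes for free and that the active-set coupling in \eqref{cstab} is exactly the membership $\lambda\in\Lambda^1_{\ob}(\xb;u)$; the rest is citation.
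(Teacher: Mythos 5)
Your proposal is correct and follows essentially the same route as the paper: MFCQ gives property $R_1$ (the paper asserts this directly, you justify it via the Mordukhovich criterion and Theorem \ref{ThMetrRegulPerturb}), the active-index dictionary identifying a triple $(u,\lambda,\mu)$ satisfying \eqref{EqFOSCMS1}, \eqref{EqTanLambda1}, \eqref{EqGradLagrZero}, \eqref{EqSecOrderStab} with $\lambda\in\Lambda(\xb,\ob)$ and a solution $(u,\alpha,\beta)=(u,\mu^g,\mu^h)$ of \eqref{cstab} is exactly the paper's argument (the paper only proves the one needed direction of your equivalence), and the final passage from $\tau_1(\omega)$ to $\norm{\omega-\ob}$ via the Lipschitz data assumption is identical. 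The extra observation that $u$ is automatically critical is harmless but not needed.
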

\begin{proof}
Take any $(u,\la,\mu)$ which satisfies  $u \in  T_\ob^{\rm lin}(\xb)$, (\ref{EqFOSCMS1}),   (\ref{EqTanLambda1}),   (\ref{EqGradLagrZero}) and  (\ref{EqSecOrderStab}). It is sufficient to show that  $(u,\la,\mu)$ also satisfies both
$\la \in  \Lambda(\xb,\bar{\omega})$ and (\ref{cstab})  when putting $\alpha=\mu^g$ and $\beta =\mu^h$. Indeed,
then the assumption of the corollary says that $u = 0$. Since MFCQ at $(\xb,\bar\omega)$ implies property $R_1$,
Corollary \ref{CorStabMPEC} thus immediately gives in the special case $NLP(\omega)$
\[
S(\omega) \cap U \subset\xb +L \tau_1(\omega)\ \forall \omega\in W.
\]
By our assumptions for $f$ and  $q=(g,h)$, we have for all  $\omega$ sufficiently close to $\bar\omega$,
\begin{eqnarray*}
\tau_1(\omega) & = &\norm{\nabla_x f(\xb,\omega)-\nabla_x f(\xb,\ob)}
+ \norm{\nabla_x q(\xb,\omega)-\nabla_x q(\xb,\bar\omega)}+{\norm{q(\xb,\omega)-q(\xb,\bar\omega)}}\\
& \le & const. \, \| \omega - \bar\omega \|
\end{eqnarray*}
and so we are done.

It remains to show that $(u,\la,\mu)$ satisfies both $\la \in  \Lambda(\xb,\bar{\omega})$ and (\ref{cstab})  when putting $\alpha=\mu^g$ and $\beta =\mu^h$. First, we observe that  (\ref{EqFOSCMS1}) says that $\la_i^g = 0$ if $i \not\in \bar{I}_g$ or if $i \in \bar{I}_g \setminus I_g(u)$, and $\la_i^g \ge 0$ if $i \in I_g(u)$. Together with (\ref{EqGradLagrZero}), this gives $\la \in \Lambda(\xb,\bar{\omega})$.
Second, $u \in T_\ob^{\rm lin}(\xb)$ and (\ref{EqSecOrderStab}) directly appear in the first two lines of (\ref{cstab}).
Moreover,  (\ref{EqTanLambda1}) says (i) $\alpha_i= \mu_i^g = 0$ if $i \not \in \bar{I}_g$ or if $i \in \bar{I}_g: \ \nabla_x g_i(\xb,\ob) \,u < 0$, and (ii) $\alpha_i = \mu_i^g \ge 0$ if $ i \in \bar{I}_g:  \ \la^g_i = 0$, which implies that $(u,\alpha)$  fulfills the last two lines of  (\ref{cstab}).
Finally, assuming that for some  $i \in \bar{I}_g: \ \la^g_i > 0$ one has $\nabla_x g_i(\xb,\ob) \,u \not= 0$,
i.e., $i \not \in I_g(u)$, this contradicts  (\ref{EqFOSCMS1}).  Therefore, also the third line of  (\ref{cstab})  is satisfied.
\qed\end{proof}
\begin{remark}\label{remnlpul} The opposite direction of Corollary \ref{nlpul} is true if the parametric problem (\ref{nlp}) includes canonical perturbations,
i.e. if $f$ and $q=(g,h)$ are defined by $f(x,\omega) =  \tilde{f}(t,x)  - \langle a,x \rangle$ and
$(g,h)(x,\omega) =   (\tilde{g},\tilde{h})(t,x)  - b$  for varying $\omega = (t,a,b) \in \Omega \times \R^n \times \R^{m_I+m_L}$,
see \cite[Thm. 8.24]{KlKum02}.
\end{remark}
The following H\"older stability result is a generalization of Proposition 4.41 in Bonnans and Shapiro \cite{BonSh00},
where global minimizing sets of $NLP(\omega)$ are considered instead of the sets of stationary solutions or local minimizers as
here. Recall that  $\xb \in \cal F (\bar\omega)$ satisfies the quadratic growth condition for
$NLP(\bar\omega)$ if $f(x,\bar\omega)-f(\xb,\bar\omega) \geq \eta\norm{x-\xb}^2\ \forall x\in V \cap {\cal F}(\bar\omega)$ holds for some $\eta > 0$ and some neighborhood $V$ of $\xb$.
\begin{corollary}\label{hoeldmin} (upper H\"older continuity of local minimizers)\\
Suppose that $\xb \in \cal F (\bar\omega)$ satisfies the quadratic growth condition for
$NLP(\bar\omega)$, MFCQ is satisfied at $\xb$, and the functions $f$, $g$, and $h$  are twice continuously differentiable near $(\xb,\bar \omega)$.
Then $X$ and $S$ are locally nonempty-valued and upper H\"older of order $\frac 12$  at   $(\bar\omega,\xb)$,
i.e.,  there are neighborhoods $U$ of $\xb$, $W$ of $\ob$ and a constant $L>0$ such that for the solution mapping $X(\omega)$ of $P(\omega)$ one has
\[
   \emptyset \not=  X(\omega)\cap U
    \subset S(\omega)  \cap U
  \subset  \{\xb\}  +L \| \omega -\bar\omega \|^{\frac 12} \quad \forall\omega\in W.
\]
\end{corollary}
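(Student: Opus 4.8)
The plan is to obtain this corollary as a specialization of Theorem~\ref{ThStabLocMin}(2.), applied to $P(\ob)=NLP(\ob)$ with $F:=\nabla_x f$ and $P=\R^{m_I}_-\times\{0\}^{m_E}$. Accordingly I need to check three things for the reference data: that Assumption~\ref{AssBasic} holds; that SOSCMS is satisfied for $M_{\ob}$ at $\xb$ and, in addition, that $M_{\ob}$ is metrically regular around $(\xb,0)$ (which will supply $X(\omega)\cap U\neq\emptyset$ via the second part of that theorem); and that $\xb$ is an essential local minimizer of second order for $P(\ob)$. Granting this, Theorem~\ref{ThStabLocMin}(2.) furnishes neighborhoods $U$ of $\xb$, $W$ of $\ob$ and $L_0>0$ with $(S_M(\omega)\cup S(\omega))\cap U\subset\xb+L_0\tau_2(\omega)\B_{\R^n}$ and $X(\omega)\cap U\neq\emptyset$ for all $\omega\in W$, and it then only remains to bound $\tau_2$.

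Assumption~\ref{AssBasic} is immediate: twice continuous differentiability of $f,g,h$ near $(\xb,\ob)$ makes all the derivatives appearing there continuous near $(\xb,\ob)$ (with $\nabla_x F=\nabla_x^2 f$ here), and it also guarantees that $f(\xb,\cdot),\nabla_x f(\xb,\cdot),q(\xb,\cdot),\nabla_x q(\xb,\cdot)$ are Lipschitz near $\ob$, as assumed throughout this section. The decisive step is the translation of MFCQ. Since $P$ is a single convex polyhedron, $N(q(\xb,\ob);P)$ is the ordinary convex normal cone, and the Mordukhovich criterion of Theorem~\ref{ThCharMetrSubReg}(1.) for $M_{\ob}$ at $(\xb,0)$ reads exactly as the dual form of MFCQ at $\xb$ (by a standard theorem of the alternative). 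Hence MFCQ is equivalent to metric regularity of $M_{\ob}$ around $(\xb,0)$, which is the metric-regularity alternative needed for local nonemptiness of $X$. Moreover, because $N(q(\xb,\ob);P;\nabla_x q(\xb,\ob)u)\subset N(q(\xb,\ob);P)$ for every $u$, the Mordukhovich criterion already forces $\lambda=0$ in the implication that defines SOSCMS, without invoking the second-order inequality; thus SOSCMS holds as well.

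The main remaining point is that $\xb$ is an essential local minimizer of second order for $NLP(\ob)$, i.e.\ that \eqref{EqEssLocMin} holds with $\Gamma=P$; I expect this to be the only nonroutine ingredient. Here I would use the converse direction noted just after \eqref{EqEssLocMin}: under metric subregularity of $M_{\ob}$ at $(\xb,0)$ the quadratic growth condition implies \eqref{EqEssLocMin}, by the argument of \cite[Section~3]{Gfr06} (which needs neither convexity of $P$ nor metric regularity). Concretely, metric subregularity gives $\dist{x,{\cal F}(\ob)}\le\kappa\,\dist{q(x,\ob),P}$ near $\xb$; for feasible $x$ the growth condition itself gives $f(x,\ob)-f(\xb,\ob)\ge\eta\norm{x-\xb}^2$, and for infeasible $x$ with $\dist{q(x,\ob),P}<\eta'\norm{x-\xb}^2$ the projection $y$ of $x$ onto ${\cal F}(\ob)$ obeys $\norm{x-y}\le\kappa\eta'\norm{x-\xb}^2$, hence $\norm{y-\xb}\ge\frac12\norm{x-\xb}$ on a small enough neighborhood, so that quadratic growth at $y$ plus local Lipschitzness of $f(\cdot,\ob)$ yields $f(x,\ob)-f(\xb,\ob)\ge\frac\eta4\norm{x-\xb}^2-O(\norm{x-\xb}^2)\ge\tilde\eta\norm{x-\xb}^2$ for $\eta'$ small and the neighborhood small. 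Since MFCQ already delivers metric regularity, hence metric subregularity, of $M_{\ob}$ at $(\xb,0)$, this step indeed applies.

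Finally I would assemble the pieces. Theorem~\ref{ThStabLocMin}(2.) gives $U,W,L_0$ as above, and $X(\omega)\subset S(\omega)$ yields $\emptyset\neq X(\omega)\cap U\subset S(\omega)\cap U\subset\xb+L_0\tau_2(\omega)\B_{\R^n}$ for $\omega\in W$. Since $\tau_2(\omega)=\norm{\nabla_x f(\xb,\omega)-\nabla_x f(\xb,\ob)}+\norm{\nabla_x q(\xb,\omega)-\nabla_x q(\xb,\ob)}+\norm{q(\xb,\omega)-q(\xb,\ob)}^{1/2}$ and the data are Lipschitz at $\xb$, the first two summands are $O(\norm{\omega-\ob})$ and the third is $O(\norm{\omega-\ob}^{1/2})$; shrinking $W$ so that $\norm{\omega-\ob}\le\norm{\omega-\ob}^{1/2}$ on $W$, we get $\tau_2(\omega)\le c\norm{\omega-\ob}^{1/2}$ and hence $\emptyset\neq X(\omega)\cap U\subset S(\omega)\cap U\subset\{\xb\}+L\norm{\omega-\ob}^{1/2}\B_{\R^n}$ with $L:=L_0c$, the claimed upper H\"older-$\frac12$ estimate; local nonemptiness of $S$ then follows from that of $X$.
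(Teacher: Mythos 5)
Your proposal is correct and follows essentially the same route as the paper's own proof: specialize Theorem~\ref{ThStabLocMin}(2.), use that MFCQ is equivalent (via the Mordukhovich criterion of Theorem~\ref{ThCharMetrSubReg}) to metric regularity of $M_{\ob}$ around $(\xb,0)$, which gives both SOSCMS and the nonemptiness alternative, and that under this regularity the quadratic growth condition is equivalent to $\xb$ being an essential local minimizer of second order. You merely write out details the paper delegates to citations (the \cite{Gfr06}-type argument for the growth/essential-minimizer equivalence and the bound $\tau_2(\omega)\le c\norm{\omega-\ob}^{1/2}$), and these fillers are sound.
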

\begin{proof} The corollary is an immediate consequence of the second part of Theorem \ref{ThStabLocMin} by taking into account that MFCQ is equivalent to metric regularity of the multifunction $M_\ob$ around $(\xb,0)$ implying SOSCMS by virtue of Theorem \ref{ThCharMetrSubReg},
and that under MFCQ the quadratic growth condition is equivalent to the property that $\xb$ is an essential local minimizer of second-order.
\qed\end{proof}
Results of this type are classical, but concern global or so-called complete local minimizing (CLM) sets, see e.g.
\cite{Alt83,AruIz05,BonSh00,Kla94}. Corollary \ref{hoeldmin} extends this by providing even existence and upper H\"older continuity for stationary solution sets and the sets of \textit{all} local minimizers, while part 2 of Theorem \ref{ThStabLocMin} allows us in addition to weaken the assumption MFCQ by assuming SOSCMS (which implies $R_2$) instead. Note that
upper H\"older stability of the global minimizing or CLM set mapping even holds if $\xb$ is not locally isolated (see e.g. \cite{BonSh00,Kla94}). Recently, Kummer \cite{Kum09} presented an alternative characterization of upper H\"older stability of KKT- stationary solutions via convergence properties of suitable iteration procedures.

\begin{remark}\label{remhoeldmin}
For nonlinear programs, a standard second order sufficient optimality condition at
$\xb \in {\cal F}(\ob)$ (see for example \cite[Prop.5.48]{BonSh00}) is equivalent to the property \eqref{EqEssLocMin} (i.e., $\xb$ is an essential local minimizer of second order for the problem $NLP(\bar\omega)$). This is true without any constraint qualification, see the remarks following Thm. 5.11 in \cite{Gfr06}.

Further note that in the case (\ref{nlp}), SOSCMS  is fulfilled for $M_\ob$ at $\xb$ if and only if for every direction $0\not=u\in T_\ob^{\rm lin}(\xb)$,
the only multiplier $\lambda=(\lambda^g,\lambda^h)\in\R^{m_I}\times \R^{m_E}$ satisfying
\begin{eqnarray*}
&&\sum_{i=1}^{m_I}\lambda_i^g\nabla g_i(\xb,\ob)+\sum_{i=1}^{m_E}\lambda_i^h\nabla h_i(\xb,\ob)=0,\ \lambda^g_i\geq 0,\ i\in I_g(u),\ \lambda^g_i= 0,\ i\not\in I_g(u),\\
&&\\
&&u^T\left(\sum_{i=1}^{m_I}\lambda_i^g\nabla_x^2 g_i(\xb,\ob)+\sum_{i=1}^{m_E}\lambda_i^h\nabla_x^2 h_i(\xb,\ob)\right)u\geq 0
\end{eqnarray*}
is $\lambda=0$, where as above
$I_g(u)=\{i\in \bar{I}_g \mv  \nabla_x g_i(\xb,\ob)u=0\}$.
\end{remark}
\begin{example} \label{ExNLP} This example shows, as mentioned above, that part 2 of Theorem \ref{ThStabLocMin} gives a stronger result than the (more classical) Corollary  \ref{hoeldmin}.  Consider the parameter dependent program
\[
P(\omega) \quad \min\, x_1^2 - x_2^2 \quad \mbox{s.t.} \ -x_2 + \omega \le 0, \ x_2 - x_1^2 \le 0,
\]
with $\Omega = \R$, $\ob=0$ and $\xb=(0,0)$. $M_\ob$ is not metrically regular near $(\xb,0)$. One easily sees that SOSCMS holds for $M_\ob$ at $\xb$: one has to check only the directions $u = (\pm 1,0) \in T_\ob^{\rm lin}(\xb)$. Obviously, $\xb$ is an
an essential local minimizer of second order for the problem $P(\bar\omega)$, and one has
\[
S(\omega) = S_M(\omega) = \left\{ \begin{array}{ll}
\{(0,0), (0,\omega)\} & \mbox{if } \omega < 0,\\
\{(\pm \sqrt{\omega},\omega)\} & \mbox{if } 0 \le \omega \le \frac12 ,
\end{array} \right.
\]
\[
X(\omega) = \left\{ \begin{array}{ll}
\{(0,\omega)\} & \mbox{if } \omega < 0,\\
\{(\pm \sqrt{\omega},\omega)\} & \mbox{if } 0 \le \omega \le \frac12 .
\end{array} \right.
\]
Indeed, the assumptions and the statement of part 2 of Theorem \ref{ThStabLocMin}  are fulfilled.
\end{example}

\if{At the end of this paper we want to present an example which demonstrates, that our results incorporate the combinatorial structure of the problem and are more far reaching than those results which one could obtain by decomposing into different branches  and applying direct arguments to the subproblems.}\fi
At the end of this paper let us mention, that our results incorporate the combinatorial structure of the problem and are more far reaching than those results which one could obtain by decomposing into different branches  and applying direct arguments to the subproblems. One simple reason is that (globally optimal) solutions of the subproblems may not be M-stationary solutions of the overall problem, as the following example demonstrates.
\begin{example}
Consider the problem
\begin{eqnarray*}
MPEC(\omega)\qquad \min_x&& x_2\\
\mbox{subject to}\qquad g_1(x,\omega)&:=& x_1^2-x_2\leq0\\
-(G_1(x,\omega),H_1(x,\omega))&:=&-(x_2+x_3+\omega,x_3)\in Q_{EC},
\end{eqnarray*}
with $\xb=(0,0,0)$, $\ob=0$. Then we have
\[X(\omega)=S(\omega)=S_M(\omega)=\begin{cases}(0,0,0)&\mbox{if $\omega\geq 0$,}\\
(0,0,-\omega)&\mbox{if $\omega<0$.}\end{cases}\]
Since the only multiplier $\lambda=(\lambda^g,\la^G,\la^H)$ fulfilling
\[\la^g\geq 0, \mbox{ either $\la^G>0,\la^H>0$ or $\la^G\la^H=0$},\ \la^g(0,-1,0)-\la^G(0,1,1)-\la^H(0,0,1)=0\]
is $\la=(0,0,0)$, the constraint mapping $M_\ob$ is metrically regular around $(\xb,0)$ by Theorem \ref{ThCharMetrSubReg}. Consequently FOSCMS and property $R_1$ are fulfilled.
Now let us verify that RSSOSC also holds. The condition $u=(u_1,u_2,u_3)\in{\cal C}_\ob(\xb)$ amounts to
\[u_2\leq 0,\ -u_2\leq 0, -(u_2+u_3)\leq 0,\ -u_3\leq 0, (u_2+u_3)u_3=0\]
or equivalently $u_2=u_3=0$. Hence, for every $u\in {\cal C}_\ob(\xb)$, the set $\Lambda^1_\ob(\xb;u)$ is the collection of all $\lambda=(\lambda^g,\Lambda^G,\lambda^H)$ fulfilling
\begin{eqnarray*}&&(0,1,0)+\lambda^g(0,-1,0)-\lambda^G(0,1,1)-\lambda^H(0,0,1)=0\\
&&\lambda^g\geq 0,\ \mbox{either $\lambda^G,\lambda^H>0$ or $\lambda^G\lambda^H=0$,}
\end{eqnarray*}
resulting in $\Lambda^1_\ob(\xb;u)=\{(1,0,0)\}$. Hence, $u^T\nabla^2_x{\cal L}^1(\xb,\la,\ob)u=2u_1^2>0$ $\forall 0\not=u\in {\cal C}_\ob(\xb), \la\in\Lambda^1_\ob(\xb;u)$, showing the validity of RSSOSC and thus, by virtue of Theorem \ref{ThStabLocMin},
\[(S(\omega)\cup S_M(\omega))\cap U\subset L\omega \B_{\R^3}\ \forall\omega\in W\]
for some neighborhoods $U$ of $\xb$ and $W$ of $\ob$ and some constant $L>0$. We see that by our results we can establish upper Lipschitz stability of  B-stationary and M-stationary solutions and hence, in particular, also of local minimizers.\\
Now we want to compare this result with the one, which one would obtain by decomposing into different branches.
Since $Q_{EC}=(\R_-\times \{0\})\cup (\{0\}\times \R_-)$, the MPEC decomposes into the two nonlinear convex programs
\begin{eqnarray*}
NLP^1(\omega)\qquad \min_x&& x_2\\
\mbox{subject to}\qquad g_1(x,\omega)&:=& x_1^2-x_2\leq0,\\
g_2(x,\omega)&:=&-(x_2+x_3+\omega)\leq 0,\\
h_1(x,\omega)&:=& -x_3=0.
\end{eqnarray*}
and
\begin{eqnarray*}
NLP^2(\omega)\qquad \min_x&& x_2\\
\mbox{subject to}\qquad g_1(x,\omega)&:=& x_1^2-x_2\leq0,\\
g_2(x,\omega)&:=& -x_3\leq 0,\\
h_1(x,\omega)&:=&-(x_2+x_3+\omega)= 0.
\end{eqnarray*}
For the first problem the set of  minimizers is given by
\[X^1(\omega)=\begin{cases}\{(0,0,0)\}&\mbox{if $\omega\geq0$,}\\
\{(x_1,-\omega,0)\mv \vert x_1\vert\leq \sqrt{-\omega}\}&\mbox{if $\omega<0$}
\end{cases}\]
and for the second problem we obtain
\[X^2(\omega)=\begin{cases}\emptyset&\mbox{if $\omega>0$,}\\
\{(0,0,-\omega)\}&\mbox{if $\omega\leq0$}
\end{cases}\]
Hence, for the union $X^1(\omega)\cup X^2(\omega)$ we only have upper H\"older continuity and this demonstrates that  in a straightforward way we cannot derive  the stability property of solutions of the MPEC from the continuity properties of solutions of the different branches.
\end{example}
\section*{Acknowledgements}
The authors are very grateful to
the referees for constructive comments that significantly improved the presentation. In particular, one referee suggested to use the  assumption \eqref{EqSuffCondR2} in Proposition \ref{PropPropertyR}(2.) instead of the stronger assumption of SOSCMS.

The research of the first author was supported by the Austrian Science Fund (FWF) under grant P26132-N25.

\end{document}